\newtheorem{theorem}{Theorem}
\newtheorem{corollary}{Corollary}
\newtheorem{remark}{Remark}
\numberwithin{equation}{section}
\numberwithin{lemma}{section}
\numberwithin{theorem}{section}
\numberwithin{corollary}{section}
\begin{document}
\setcounter{page}{1}

\title{On the  incomplete Srivastava's  triple  hypergeometric matrix  functions}

\author{Ashish Verma\footnote{Corresponding author}
\\ 
Department of Mathematics\\ Prof. Rajendra Singh (Rajju Bhaiya)\\ Institute of Physical Sciences for Study and Research \\  V.B.S. Purvanchal University, Jaunpur  (U.P.)- 222003, India\\
vashish.lu@gmail.com
\\[10pt]
}

\maketitle
\begin{abstract}
The paper proposes to  introduce  incomplete  Srivastava's  triple hypergeometric  matrix functions  through application of  the  incomplete Pochhammer matrix   symbols. We also derive   certain properties  such as  matrix differential equation, integral formula, reduction formula,  recursion formula,  recurrence  relation and differentiation formula of the incomplete Srivastava's  triple hypergeometric  matrix functions.
 \\[12pt]
Keywords: Matrix functional calculus, recursion formula,  Gamma matrix function, Incomplete gamma matrix function,  Incomplete Pochhammer matrix symbol, Laguerre matrix polynomial, Bessel and modified Bessel matrix  function.\\[12pt]
AMS Subject Classification:   15A15; ; 33C65; 33C45; 34A05.
\end{abstract}

\section{Introduction}Recently, Srivastava {\em et al}. \cite{HM}  have studied incomplete Pochhammer symbols  and  incomplete hypergeometric functions and discussed   applications of these functions in communication theory, probability theory and groundwater pumping modelling. Cetinkaya \cite{AC}   introduced the incomplete second Appell hypergeometric functions and obtained certain properties of these functions.  Also, recently introduced incomplete Srivastava's triple hypergeometrics and  investigated certain properties of the incomplete Srivastava's triple hypergeometrics  \cite{jc, jc1}.  Srivastava {\em et al}. \cite{SSK} have obtained several interesting properties of the incomplete $H$-functions. On his work on hypergeometric functions of three variables, Srivastava \cite{HM1, HM2} noticed the existence of three additional complete triple hypergeometric functions of the second-order. These functions are known in literature as Srivastava’s triple hypergeometric functions $H_A$, $H_B$ and $H_C$ and are given in \cite{SK, SM}.

For a wide variety of other explorations involving incomplete hypergeometric functions in several variables, the interested reader may be referred to several recent papers  \cite{8r1, jc, jc1,  8r2, 8r5, 8r3, 8r4}.

The matrix theory has become pervasive to  almost every area of Mathematics, in general  and in orthogonal polynomial and special functions, in particular.  The matrix analogue of the Gauss hypergeometric function was introduced by  J\'odar and  Cort\'es \cite{LC}, particularly the hypergeometric matrix function, plays a very important role in solving numerous problems of mathematical physics, engineering and mathematical sciences \cite{ p2,p3, p1, p5, p4}.  Quite recently, the incomplete hypergeometric matrix functions was introduced by  Abdalla \cite{Ab}.  Within the frame, they discuss some fundamental  properties of these functions.  In a similar vein   Bakhet {\em et al}. \cite{AY} introduced the Wright hypergeometric matrix functions and the incomplete Wright Gauss hypergeometric matrix functions and  discuss some properties of these functions. The paper is in succession of the work we had already attempted and successfully completed \cite{A1, A2, A3} that had introduced the  incomplete first, second and fourth Appell hypergeometric matrix functions and studied some basic properties such as matrix differential equation, integral formula, recursion formula, recurrence relation and differentiation formula of these functions.

Here is elucidated a sectionwise distribution of present work.  In Section~2, we list basic definitions that are needed in  sequel.  In Section~3, we first  introduce two incomplete Srivastava's  triple hypergeometric  matrix functions $\gamma_{\mathcal{A}}^{H}$ and $\Gamma_{\mathcal{A}}^{H}$ by using properties of Pochhaommer matrix symbol. Some properties such as matrix differential equation, integral representation, reduction formula, recursion formula,  recurrence relation and differentitation formula are also derived. In Section~4, we introduce  incomplete Srivastava's  triple hypergeometric  matrix functions $\gamma_{\mathcal{B}}^{H}$ and $\Gamma_{\mathcal{B}}^{H}$ by using properties of Pochhaommer matrix symbol and investigate several properties of each of these incomplete Srivastava’s triple hypergeometric functions. Finally, in Section 5, we define  incomplete Srivastava's  triple hypergeometric  matrix functions $\gamma_{\mathcal{C}}^{H}$ and $\Gamma_{\mathcal{C}}^{H}$ by using properties of Pochhaommer matrix symbol and derive certain properties of each of these  functions. The work at hand also attempts to  present connections between these matrix functions and  Bessel and Laguerre  matrix functions.

\section{Preliminaries}
Throughout this paper,  let $\mathbb{C}^{r\times r}$ be the vector space of $r$-square matrices with complex entries. For any matrix $A\in \mathbb{C}^{r\times r}$, its spectrum $\sigma(A)$ is the set of eigenvalues of $A$. 
 A square matrix $A$ in $\mathbb{C}^{r\times r}$  is said to be positive stable if $\Re(\lambda)>0$ for all $\lambda\in\sigma(A)$. 

Let $A$ be positive stable matrix in $\mathbb{C}^{r\times r}$. The Gamma matrix function $\Gamma(A)$ is defined as follows \cite{LC1}:
\begin{align}
\Gamma(A)=\int_{0}^{\infty} e^{-t} t^{A-I} dt; \hskip1cm t^{A-I}= \exp((A-I)\ln t), \label{g1}
\end{align}
where $I$ being the $r$-square identity matrix.

Let $A$ be positive stable matrix in $\mathbb{C}^{r\times r}$ and let $x$ be a positive real number.
Then the  incomplete gamma matrix  functions $\gamma(A,x)$ and $\Gamma(A,x)$ are defined by \cite{Ab}
\begin{align}\gamma(A,x)=\int_{0}^{x} e^{-t} t^{A-I}dt \label{1eq4}
\end{align}
and
\begin{align}
 \Gamma(A,x)= \int_{x}^{\infty} e^{-t} t^{A-I}dt\,,\label{1eq5}
\end{align}
respectively and satisfy the following decomposition formula:
\begin{align}\gamma(A,x)+\Gamma(A,x)=\Gamma(A).\label{1eq6}
\end{align}
Let  $A$  be a matrix in $\mathbb{C}^{r\times r}$ and let $x$ be a positive real number. Then the incomplete Pochhammer matrix symbols $(A;x)_{n}$ and $[A; x]_{n}$ are defined as follows  \cite{Ab}
\begin{align}
(A; x)_{n}= \gamma(A+nI, x) \,\Gamma^{-1}(A)\label{1eq7}
\end{align}
and 
\begin{align}
[A; x]_{n}= \Gamma(A+nI, x)\, \Gamma^{-1}(A).\label{1eq8}
\end{align}
In idea of (\ref{1eq6}), these incomplete Pochhammer matrix symbols $(A; x)_{n}$ and $[A; x]_{n}$ complete the following decomposition relation
\begin{align}(A; x)_{n}+[A; x]_{n}= (A)_{n}.\label{1eq9}
\end{align}
where $(A)_{n}$ is the Pochhammer  symbol given in \cite{LC}.

Let $A$, $B$ and $C$ be matrices in  $\mathbb{C}^{r\times r}$ such that $C+kI$ is invertible for all integers $k \geq0.$ The incomplete Gauss hypergeometric matrix  functions  are defined by  \cite{Ab}
\begin{align}{ _2\gamma_{1}}\Big[(A; x), B; C; z\Big]= \sum_{n=0}^{\infty}(A;x)_{n} (B)_{n}(C)_{n}^{-1}\frac{z^{n}}{n!}\label{11eq10}
\end{align}
and
\begin{align}{_2\Gamma_{1}}\Big[[A; x], B; C; z\Big]= \sum_{n=0}^{\infty}[A;x]_{n} (B)_{n}(C)_{n}^{-1}\frac{z^{n}}{n!}.\label{1eq10}
\end{align}

The  Bessel matrix function is defined by \cite{J1, J2, J3}: 
\begin{align}J_{A}(z)=\sum_{m\geq 0}^{\infty}\frac{(-1)^{m}\,\,\Gamma^{-1}(A+(m+1)I)}{m!}\Big(\frac{z}{2}\Big)^{A+2mI},\label{j1}
\end{align}
where $A+kI$ is invertible for all integers $k\geq 0$. Therefore, the modified Bessel matrix functions are introduced in \cite{J3}  in the form 
\begin{align}&I_{A}= e^\frac{-Ai\pi}{2} J_{A}(z  e^\frac{i\pi}{2}); \,\,\, -\pi<arg(z)<\frac{\pi}{2},\notag\\
&I_{A}= e^\frac{Ai\pi}{2} J_{A}(z  e^\frac{-i\pi}{2}); \,\,\, -\frac{\pi}{2}<arg(z)<\pi.\label{j2}
\end{align}

The Laguerre matrix polynomial is defined by \cite{LR}
\begin{align}
L_{n}^{(A, \lambda)}(z)=\sum_{k=0}^{n}\frac{(-1)^{k}{\lambda}^{k}}{k!\, (n-k)!}(A+I)_{n}{[(A+I)_{k}]}^{-1} z^{k}.
\end{align}
It follows that for $ \lambda=1$, we have
\begin{align}L_{n}^{(A)}(z)=\frac{(A+I)_{n}}{n!}\, _{1}F_{1}(-nI; A+I; z).
\end{align}
\section{The  incomplete Srivastava's  triple  hypergeometric matrix  functions $\gamma_{\mathcal{A}}^{H}$ and $\Gamma_{\mathcal{A}}^{H}$}
This section deals with  the incomplete Srivastava's  triple  hypergeometric matrix  functions $\gamma_{\mathcal{A}}^{H}$ and $\Gamma_{\mathcal{A}}^{H}$ as follows: 
\begin{align}
&\gamma_{\mathcal{A}}^{H}[(A; x), B, B'; C, C'; z_1, z_2, z_3]\notag\\&=\sum_{m, n, p\geq 0}(A;x)_{m+p} (B)_{m+n} (B')_{n+p} (C)^{-1}_{m}(C')^{-1}_{n+p}\frac{z_{1}^{m}z_2^{n}z_3^{p}}{m! n! p!},\label{2eq1}\\
&\Gamma_{\mathcal{A}}^{H}[(A; x), B, B'; C, C' ; z_1, z_2, z_3]\notag\\&=\sum_{m, n, p\geq 0}[A;x]_{m+p} (B)_{m+n} (B')_{n+p} (C)^{-1}_{m}(C')^{-1}_{n+p}\frac{z_{1}^{m}z_2^{n}z_3^{p}}{m! n! p!}.\label{2eq2}
\end{align}
where $A$, $B$, $B'$, $C$,  $C'$  are  positive stable and commutative  matrices in  $\mathbb{C}^{r\times r}$ such that $C+kI$ and  $C'+kI$  are invertible for all integers $k\geq0$.\\
From (\ref{1eq9}), we have the following decomposition formula
\begin{align}
&\gamma_{\mathcal{A}}^{H}[(A; x), B, B'; C,C' ; z_1, z_2, z_3]+ \Gamma_{\mathcal{A}}^{H}[(A; x), B, B'; C,C' ; z_1, z_2, z_3]\notag\\&= H_{\mathcal{A}}[A, B, B'; C, C' ; z_1, z_2, z_3].\label{2eq3}
\end{align}
where $H_{\mathcal{A}}[A, B, B'; C,C' ; z_1, z_2, z_3]$ is the Srivastava's  triple  hypergeometric matrix  functions \cite{RD1}.

\begin{remark}
It is quite evident  that the special cases of (\ref{2eq1}) and (\ref{2eq2}) when $z_2 = 0$ reduces to the
known incomplete families of the second Appell hypergeometric  matrix functions \cite{A3}. Also, the special
cases of (\ref{2eq1}) and (\ref{2eq2}) when $z_2 = 0$ and  $z_3 = 0$ or $z_1 = 0$ are seen to yield the known incomplete families of Gauss hypergeometric  matrix functions \cite{Ab}.\end{remark}

If one can successfully  discuss the properties and characteristics of \\$\gamma_{\mathcal{A}}^{H}[(A; x), B, B'; C, C' ; z_1, z_2, z_3]$, it will evidently  be sufficient to determine the properties of $\Gamma_{\mathcal{A}}^{H}[(A; x), B, B'; C, C' ; z_1, z_2, z_3]$ according to the decomposition formula (\ref{2eq3}). 
\begin{theorem}For commuting matrices $A$, $B$, $B'$, $C$  and $C'$ in $\mathbb{C}^{r\times r}$ such that $C+kI$  and $C'+kI$ is invertible for all integers $k\geq0.$ Then the function defined by 
${\mathcal{T}_1}={\mathcal{T}_1}(z_1, z_2, z_3) =\gamma_{\mathcal{A}}^{H}[(A; x), B, B'; C,C' ; z_1, z_2, z_3]+\Gamma_{\mathcal{A}}^{H}[(A; x), B, B'; C,C' ; z_1, z_2, z_3]$  satisfies the following system of partial differential equations:
\begin{align}
&\Big[z_1\frac{\partial}{\partial z_1}{(z_1\frac{\partial}{\partial z_1}+C-I)- z_1(z_1\frac{\partial}{\partial z_1}+z_3\frac{\partial}{\partial z_3}+A)(z_1\frac{\partial}{\partial z_1}+z_2\frac{\partial}{\partial z_2}+B)}\Big] \mathcal{T}=O,\label{2eq4}\\
&\Big[z_2\frac{\partial}{\partial z_2}{(z_2\frac{\partial}{\partial z_2}+ z_3\frac{\partial}{\partial z_3}+C'-I)- z_2(z_2\frac{\partial}{\partial z_2}+z_1\frac{\partial}{\partial z_1}+B)(z_2\frac{\partial}{\partial z_2}+z_3\frac{\partial}{\partial z_3}+B')}\Big] \mathcal{T}=O,\label{m1}\\
&\Big[z_3\frac{\partial}{\partial z_3}{(z_2\frac{\partial}{\partial z_2}+z_3\frac{\partial}{\partial z_3}+C'-I)- z_3(z_1\frac{\partial}{\partial z_1}+z_3\frac{\partial}{\partial z_3}+A)(z_2\frac{\partial}{\partial z_2}+z_3\frac{\partial}{\partial z_3}+B')}\Big] \mathcal{T}=O.\label{2eq5}
\end{align}
\end{theorem}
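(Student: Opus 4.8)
The plan is to exploit the decomposition formula (\ref{2eq3}): since $\mathcal{T}=\mathcal{T}_1=\gamma_{\mathcal{A}}^{H}+\Gamma_{\mathcal{A}}^{H}=H_{\mathcal{A}}[A,B,B';C,C';z_1,z_2,z_3]$, it suffices to show that the \emph{complete} Srivastava triple series is annihilated by each of the three bracketed operators in (\ref{2eq4})--(\ref{2eq5}). I would therefore work throughout with the single series
\[
\Omega=\sum_{m,n,p\geq0}(A)_{m+p}(B)_{m+n}(B')_{n+p}(C)^{-1}_{m}(C')^{-1}_{n+p}\,\frac{z_1^{m}z_2^{n}z_3^{p}}{m!\,n!\,p!}
\]
and verify the system term by term.

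First I would introduce the Euler operators $\theta_1=z_1\partial/\partial z_1$, $\theta_2=z_2\partial/\partial z_2$, $\theta_3=z_3\partial/\partial z_3$, which act on the monomial $z_1^{m}z_2^{n}z_3^{p}$ as multiplication by the scalars $m$, $n$, $p$ respectively. Rewriting each bracket of (\ref{2eq4})--(\ref{2eq5}) in terms of $\theta_1,\theta_2,\theta_3$, I would record the two elementary facts that drive the computation: (i) each $\theta_i$ is diagonal on monomials, so a matrix factor such as $(\theta_1+\theta_3+A)$ becomes $A+(m+p)I$ on the $(m,n,p)$ term; and (ii) left multiplication by $z_1$ (respectively $z_2$, $z_3$) raises the corresponding summation index by one, so that in collecting the coefficient of a fixed monomial $z_1^{m}z_2^{n}z_3^{p}$ one reads off the $(m-1,n,p)$ (respectively $(m,n-1,p)$, $(m,n,p-1)$) term.

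The heart of the argument is then the pair of Pochhammer recursions $(A)_{k+1}=(A)_{k}(A+kI)$ and $(C)_{m}=(C)_{m-1}(C+(m-1)I)$, the latter giving $(C+(m-1)I)(C)^{-1}_{m}=(C)^{-1}_{m-1}$ and similarly for $C'$. For the first equation, applying $\theta_1(\theta_1+C-I)$ to the $(m,n,p)$ term produces the factor $m\,(C+(m-1)I)(C)^{-1}_m=m\,(C)^{-1}_{m-1}$, while the coefficient of the same monomial in $z_1(\theta_1+\theta_3+A)(\theta_1+\theta_2+B)\Omega$ comes from the $(m-1,n,p)$ term and, after using $(A+(m+p-1)I)(A)_{m+p-1}=(A)_{m+p}$ together with $(B+(m+n-1)I)(B)_{m+n-1}=(B)_{m+n}$, carries the factor $(C)^{-1}_{m-1}$ weighted by $1/(m-1)!$. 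Since $m/m!=1/(m-1)!$, the two coefficients coincide and the term cancels identically; equations (\ref{m1}) and (\ref{2eq5}) are disposed of by the same bookkeeping, this time acting on the $(C')^{-1}_{n+p}$ factor and shifting the indices $n$ and $p$.

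The one place that genuinely requires care — and the main obstacle — is noncommutativity. All of the above rearrangements (pulling $A+kI$ through $(A)_{k}$, cancelling $C+(m-1)I$ against $(C)^{-1}_m$, and treating an operator product such as $(\theta_1+\theta_3+A)(\theta_1+\theta_2+B)$ as an unambiguous matrix coefficient) are legitimate precisely because $A$, $B$, $B'$, $C$, $C'$ are assumed to commute, so that every Pochhammer symbol and its inverse is a polynomial in a single commuting family and all factors may be reordered freely. I would state this commutativity reduction explicitly at the outset and, for rigour, also note that the termwise action of the differential operators is justified on the common region of convergence inherited from $H_{\mathcal{A}}$, where $\Omega$ and its partial derivatives converge absolutely and uniformly on compact subsets.
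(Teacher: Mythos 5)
Your proposal is correct, and it begins with exactly the reduction the paper uses: by the decomposition formula (\ref{2eq3}), $\mathcal{T}_1=\gamma_{\mathcal{A}}^{H}+\Gamma_{\mathcal{A}}^{H}=H_{\mathcal{A}}[A,B,B';C,C';z_1,z_2,z_3]$, so the whole theorem rests on the complete Srivastava matrix function. The divergence is in what comes next: the paper's entire proof consists of this reduction plus a citation of Dwivedi and Sahai \cite{RD1}, where the system of matrix partial differential equations satisfied by $H_{\mathcal{A}}$ is established, whereas you prove that fact from scratch by a termwise Euler-operator computation. Your verification is sound: on the $(m,n,p)$ term, $\theta_1(\theta_1+C-I)$ produces the factor $m\,(C+(m-1)I)(C)^{-1}_{m}=m\,(C)^{-1}_{m-1}$, while the coefficient of the same monomial in $z_1(\theta_1+\theta_3+A)(\theta_1+\theta_2+B)\Omega$ comes from the $(m-1,n,p)$ term and, via $(A+(m+p-1)I)(A)_{m+p-1}=(A)_{m+p}$, $(B+(m+n-1)I)(B)_{m+n-1}=(B)_{m+n}$ and $m/m!=1/(m-1)!$, carries the identical factor, so the first equation cancels term by term; the second and third equations follow by the same bookkeeping applied to $(C')^{-1}_{n+p}$ with the index shifts in $n$ and $p$ respectively. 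You are also right that commutativity is what legitimizes reordering the matrix factors, and that this is precisely the hypothesis of the theorem. The trade-off between the two routes: the paper's proof is a one-line deferral to the literature, defensible since \cite{RD1} is the very source of the matrix function $H_{\mathcal{A}}$ and of its differential system; yours is self-contained, makes visible exactly where each hypothesis (commutativity, invertibility of $C+kI$ and $C'+kI$) enters, and in effect supplies the proof of the cited lemma as well, at the cost of length and of needing the convergence remark to justify termwise differentiation.
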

\begin{proof}(\ref{2eq3}) suceeds into the following proof cojoined with $H_{\mathcal{A}}[A, B, B'; C, C' ; z_1, z_2, z_3]$ which adequately satisfies the system of the  matrix  differential equations given by \cite{RD1}.
\end{proof}
\begin{theorem} Let $A$, $B$, $B'$, $C$ and $C'$ be  positive stable and commutative matrices in  $\mathbb{C}^{r\times r}$ such that $A+kI$, $B+kI$, $C+kI$ and  $C'+kI$  are  invertible for all integers $k\geq0$. Then the following integral representation for $\Gamma_{\mathcal{A}}^{H}$ in (\ref{2eq2}) holds true:
\begin{align}
&\Gamma_{\mathcal{A}}^{H}[(A; x), B, B'; C,C'; z_1, z_2, z_3]= \Gamma^{-1}(A) \Gamma^{-1}(B)\notag\\&\times \Big[\int_{x}^{\infty} \int_{0}^{\infty}e^{-t-s}\, t^{A-I} s^{B-I}  {_{0}F_{1}}(-; C; z_1 st)\, {_{1}F_{1}}(B'; C' ; z_2 s+ z_3 t) dt ds\Big],\label{2eq6}
\end{align}
where ${_{0}F_{1}}(-; C; z_1)$ is the  Gauss hypergeometric  matrix function of one denomenator and ${_{1}F_{1}}(B; C ; z_1)$ is the Kummer hypergeometric  matrix function.
\end{theorem}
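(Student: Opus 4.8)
The plan is to start from the series definition~(\ref{2eq2}) and feed in integral representations for the two Pochhammer-type factors that carry the summation indices into the exponents of $t$ and $s$. First I would rewrite the incomplete Pochhammer symbol using~(\ref{1eq8}) together with~(\ref{1eq5}): since the matrices commute and $A$ is positive stable,
\begin{align*}
[A;x]_{m+p}=\Gamma(A+(m+p)I,x)\,\Gamma^{-1}(A)=\Gamma^{-1}(A)\int_{x}^{\infty}e^{-t}\,t^{A+(m+p-1)I}\,dt .
\end{align*}
In the same way, the complete Pochhammer symbol admits the representation $(B)_{m+n}=\Gamma^{-1}(B)\,\Gamma(B+(m+n)I)$, which by~(\ref{g1}) equals
\begin{align*}
(B)_{m+n}=\Gamma^{-1}(B)\int_{0}^{\infty}e^{-s}\,s^{B+(m+n-1)I}\,ds .
\end{align*}
Substituting both into~(\ref{2eq2}) pulls the constant factor $\Gamma^{-1}(A)\,\Gamma^{-1}(B)$ out front and, after writing $t^{A+(m+p-1)I}=t^{A-I}t^{(m+p)I}$ and $s^{B+(m+n-1)I}=s^{B-I}s^{(m+n)I}$, leaves only the monomials $t^{(m+p)I}$ and $s^{(m+n)I}$ inside the double integral.

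Next I would interchange the triple summation with the double integration and regroup the summand by index. The variable $m$ occurs only through $(C)^{-1}_{m}$ together with $t^{mI}s^{mI}z_{1}^{m}/m!$, so summing over $m$ produces $\sum_{m\ge0}(C)^{-1}_{m}(z_1 st)^{m}/m!={_{0}F_{1}}(-;C;z_1 st)$. The remaining factor $(B')_{n+p}(C')^{-1}_{n+p}$ is coupled with $s^{nI}t^{pI}z_2^{n}z_3^{p}/(n!\,p!)$; collecting the terms with $n+p=k$ and using the binomial expansion $(z_2 s+z_3 t)^{k}=\sum_{n+p=k}\binom{k}{n}(z_2 s)^{n}(z_3 t)^{p}$ gives
\begin{align*}
\sum_{n,p\ge0}(B')_{n+p}(C')^{-1}_{n+p}\frac{(z_2 s)^{n}(z_3 t)^{p}}{n!\,p!}
=\sum_{k\ge0}(B')_{k}(C')^{-1}_{k}\frac{(z_2 s+z_3 t)^{k}}{k!}={_{1}F_{1}}(B';C';z_2 s+z_3 t).
\end{align*}
Assembling the two reduced sums with the surviving factor $t^{A-I}s^{B-I}e^{-t-s}$ and the constant $\Gamma^{-1}(A)\,\Gamma^{-1}(B)$ reproduces exactly the right-hand side of~(\ref{2eq6}).

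The main obstacle is the rigorous justification of the term-by-term interchange of the triple sum with the double integral over the unbounded region $[x,\infty)\times[0,\infty)$ in the matrix-valued setting. I would handle this by passing to a submultiplicative matrix norm and bounding $\|(B')_{n+p}(C')^{-1}_{n+p}\|$, $\|(C)^{-1}_{m}\|$ and $\|t^{A-I}\|,\|s^{B-I}\|$ via the positive-stability and invertibility hypotheses on $A,B,C,C'$, so that the scalar majorant series of ${_0F_1}$ and ${_1F_1}$ converge and the associated scalar double integral is finite; one then invokes the Fubini and dominated-convergence theorems for Bochner integrals of matrix-valued functions. Commutativity of the matrices is used throughout to split the powers $t^{A+(m+p-1)I}$ and $s^{B+(m+n-1)I}$ and to move $\Gamma^{-1}(A)$, $\Gamma^{-1}(B)$ freely past the other factors, while the invertibility of $A+kI$ and $B+kI$ guarantees that $\Gamma(A)$ and $\Gamma(B)$ are invertible, so that the two integral representations above are legitimate.
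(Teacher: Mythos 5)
Your proposal is correct and follows essentially the same route as the paper: both substitute the integral representations $[A;x]_{m+p}=\Gamma^{-1}(A)\int_{x}^{\infty}e^{-t}t^{A+(m+p-1)I}\,dt$ and $(B)_{m+n}=\Gamma^{-1}(B)\int_{0}^{\infty}e^{-s}s^{B+(m+n-1)I}\,ds$ into the series (\ref{2eq2}), interchange summation and integration, sum over $m$ to get ${_{0}F_{1}}(-;C;z_1st)$, and collapse the double sum over $n,p$ into ${_{1}F_{1}}(B';C';z_2s+z_3t)$ — your binomial regrouping is exactly the paper's summation formula (\ref{l2}). Your added discussion of Fubini-type justification is a rigor supplement the paper omits, not a different method.
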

\begin{proof}By replacing the incomplete Pochhammer matrix symbol $[A; x]_{m+p}$  in (\ref{1eq5}) and (\ref{1eq8}) and the   Pochhammer matrix symbol $(B)_{m+n}$  by its integral representation in (\ref{2eq2}), one attain the following equation
\begin{align}
&\Gamma_{\mathcal{A}}^{H}[(A; x), B, B'; C,C' ; z_1, z_2, z_3]\notag\\&= \Gamma^{-1}(A) \sum_{m, n, p\geq0}^{}\Big(\int_{x}^{\infty}e^{-t}\, t^{A+(m+p-1)I}dt\Big)\notag\\
&\times  (B)_{m+n} (B')_{n+p} (C)^{-1}_{m} (C')^{-1}_{n+p}\frac{z_1^{m} z_{2}^{n} z_{3}^{p}}{m! n! p!},\notag\\
&=\Gamma^{-1}(A) \Gamma^{-1}(B)\sum_{m, n, p\geq0}^{}\Big(\int_{x}^{\infty}\int_{0}^{\infty} e^{-t-s} t^{A+(m+p-1)I} s^{B+(m+n-1)I}dt ds\Big)\notag\\
&\times  (B')_{n+p} (C)^{-1}_{m} (C')^{-1}_{n+p}\frac{z_1^{m} z_{2}^{n} z_{3}^{p}}{m! n! p!},\notag\\
&=\Gamma^{-1}(A) \Gamma^{-1}(B)\Big(\int_{x}^{\infty}\int_{0}^{\infty} e^{-t-s} t^{A-I} s^{B-I}dt ds\Big)\notag\\
&\times \Big(\sum_{m\geq 0}^{}(C)^{-1}_{m}\frac{(z_1 st)^{m}}{m!}\Big)\Big(\sum_{n, p\geq 0}^{} (B')_{n+p} (C')^{-1}_{n+p}\frac{ (z_{2} s)^{n} (z_{3} t)^{p}}{n! p!}.\label{l1}
\end{align}
Taking into account the summation formula \cite{SM}
\begin{align}
\sum_{N\geq 0}^{}f(N) \frac{(z_1+z_2)^{N}}{N!}=\sum_{m, n\geq 0}^{}f(m+n)\frac{z_{1}^{m}}{m!}\frac{z_{2}^{n}}{n!}.\label{l2}
\end{align}
we get (\ref{2eq6}).
Hereby stands completed  the proof of  (\ref{2eq6}).
\end{proof}
\begin{corollary}
The following double integral representations hold true:
\begin{align}&\Gamma_{\mathcal{A}}^{H}[(A; x), B, -mI; C, C'+I ; z_1, z_2, z_3]\notag\\
&=m![(C'+I)_{m}]^{-1} \Gamma^{-1}(A) \Gamma^{-1}(B)\notag\\&\times \Big[\int_{x}^{\infty} \int_{0}^{\infty}e^{-t-s}\, t^{A-I} s^{B-I}  {_{0}F_{1}}(-; C; z_1 st)\, L^{(C')}_{m}(z_2 s+ z_3 t) dt ds\Big],
\end{align}
where $A$, $B$, $B'$, $C$ and $C'$ are  positive stable and commutative matrices in  $\mathbb{C}^{r\times r}$ such that $A+kI$, $B+kI$, $C+kI$ and  $C'+kI$  are  invertible for all integers $k\geq0$. 
\end{corollary}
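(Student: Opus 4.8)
The plan is to read off the corollary directly from the integral representation (\ref{2eq6}) by specializing the parameters and then invoking the Laguerre reduction of the confluent hypergeometric matrix function. First I would set $B'=-mI$ and replace $C'$ by $C'+I$ throughout (\ref{2eq6}). The only place the parameter pair $(B',C')$ enters the right-hand side is the Kummer factor ${_1F_1}(B';C';z_2 s+z_3 t)$, which under this substitution becomes ${_1F_1}(-mI;C'+I;z_2 s+z_3 t)$, while the factors $\Gamma^{-1}(A)$, $\Gamma^{-1}(B)$, $e^{-t-s}$, $t^{A-I}$, $s^{B-I}$ and ${_0F_1}(-;C;z_1 st)$ are left unchanged.

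Next I would rewrite this terminating Kummer matrix function as a Laguerre matrix polynomial. From the identity $L_{m}^{(C')}(w)=\frac{(C'+I)_{m}}{m!}\,{_1F_1}(-mI;C'+I;w)$ recorded in Section~2, together with the fact that $(C'+I)_{m}$ is invertible (since $C'+kI$ is invertible for every integer $k\ge 0$), I would invert to get ${_1F_1}(-mI;C'+I;w)=m!\,[(C'+I)_{m}]^{-1}\,L_{m}^{(C')}(w)$ with $w=z_2 s+z_3 t$. Because the matrices $A$, $B$, $B'$, $C$, $C'$ all commute, the scalar $m!$ and the matrix factor $[(C'+I)_{m}]^{-1}$ are constant in the integration variables $s$ and $t$, so they may be extracted from the double integral. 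Placing them alongside $\Gamma^{-1}(A)\,\Gamma^{-1}(B)$ then reproduces exactly the asserted formula.

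The point that needs care is the legitimacy of setting $B'=-mI$, since the stated hypotheses nominally list $B'$ among the positive stable matrices whereas $-mI$ is not positive stable. Here I would observe that the derivation of (\ref{2eq6}) never used an integral representation for the factor $(B')_{n+p}$: that factor was carried through the triple summation untouched and only reassembled, by means of the summation formula (\ref{l2}), into the Kummer function ${_1F_1}(B';C';z_2 s+z_3 t)$. Consequently positive stability of $B'$ is not actually required for (\ref{2eq6}) to hold, and the specialization $B'=-mI$ is admissible; the associated series merely terminates, producing the polynomial $L_{m}^{(C')}$. This is the only genuine obstacle in the argument; once it is settled, the remainder is the routine bookkeeping described above.
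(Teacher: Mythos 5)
Your proposal is correct and is essentially the paper's own (implicit) derivation: the corollary is read off from the integral representation \eqref{2eq6} of Theorem 3.2 by setting $B'=-mI$ and $C'\to C'+I$, then applying the Laguerre identity $L_{m}^{(C')}(w)=\frac{(C'+I)_{m}}{m!}\,{_1F_1}(-mI;C'+I;w)$ from Section~2 and extracting the constant factor $m!\,[(C'+I)_{m}]^{-1}$ from the integral. Your additional observation --- that positive stability of $B'$ is never used in deriving \eqref{2eq6}, since only $A$ and $B$ are replaced by gamma-type integral representations, so the non--positive-stable specialization $B'=-mI$ is legitimate --- is a careful point the paper passes over in silence.
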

\begin{corollary}The following double integral representations hold true:
\begin{align}&\Gamma_{\mathcal{A}}^{H}[(A; x), B, B'; C+I,C' ; -z_1, z_2, z_3]\notag\\
&=z_{1}^{\frac{-C}{2}}\, \Gamma^{-1}(A) \Gamma^{-1}(B)\Gamma(C+I)\notag\\
&\times  \int_{x}^{\infty}\int_{0}^{\infty} e^{-t-s} \,t^{A-\frac{C}{2}-I} s^{B-\frac{C}{2}-I} J_{C}(2\sqrt{z_1 st}) {_{1}F_{1}}(B'; C' ; z_2 s+ z_3 t) dt ds;
\end{align}

\begin{align}&\Gamma_{\mathcal{A}}^{H}[(A; x), B, B'; C+I,C' ;  z_1, z_2, z_3]\notag\\
&=z_{1}^{\frac{-C}{2}}\, \Gamma^{-1}(A) \Gamma^{-1}(B)\Gamma(C+I)\notag\\
&\times  \int_{x}^{\infty}\int_{0}^{\infty} e^{-t-s} \,t^{A-\frac{C}{2}-I} s^{B-\frac{C}{2}-I} I_{C}(2\sqrt{z_1 st}) {_{1}F_{1}}(B'; C' ; z_2 s+ z_3 t) dt ds,
\end{align}
where $A$, $B$, $B'$, $C$ and $C'$ are  positive stable and commutative matrices in  $\mathbb{C}^{r\times r}$ such that $A+kI$, $B+kI$, $C+kI$ and  $C'+kI$  are  invertible for all integers $k\geq0$. 
\end{corollary}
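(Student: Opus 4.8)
The plan is to derive both identities directly from the integral representation \eqref{2eq6} of Theorem 3.2, specialized to lower parameter $C+I$, by recognizing the resulting ${_{0}F_{1}}$-factor as a Bessel (respectively modified Bessel) matrix function.

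First I would record the scalar-to-matrix bridge. From $(C+I)_{m}=\Gamma(C+(m+1)I)\,\Gamma^{-1}(C+I)$, hence $(C+I)_{m}^{-1}=\Gamma(C+I)\,\Gamma^{-1}(C+(m+1)I)$, the series definition of ${_{0}F_{1}}$ gives
\begin{align}
{_{0}F_{1}}(-;C+I;-u)=\Gamma(C+I)\sum_{m\geq0}\frac{(-1)^{m}\,\Gamma^{-1}(C+(m+1)I)}{m!}\,u^{m}.\notag
\end{align}
Comparing this with the defining series \eqref{j1} of $J_{C}$ evaluated at $2\sqrt{u}$, where $(2\sqrt{u}/2)^{C+2mI}=u^{\frac{C}{2}+mI}$, yields the matrix relation
\begin{align}
{_{0}F_{1}}(-;C+I;-u)=\Gamma(C+I)\,u^{-\frac{C}{2}}\,J_{C}(2\sqrt{u}),\notag
\end{align}
legitimate because $C$ commutes with the scalar powers of $u$. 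A parallel computation using the modified Bessel series obtained from \eqref{j2}, whose alternating sign is cancelled by the factor $(-1)^{m}$ coming from $(e^{i\pi/2})^{2mI}$ so that $I_{C}(2\sqrt{u})=\sum_{m\geq0}\frac{\Gamma^{-1}(C+(m+1)I)}{m!}\,u^{\frac{C}{2}+mI}$, delivers the companion relation ${_{0}F_{1}}(-;C+I;u)=\Gamma(C+I)\,u^{-\frac{C}{2}}\,I_{C}(2\sqrt{u})$.

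Next I would set $u=z_{1}st$ and substitute these two relations into \eqref{2eq6} with $C$ replaced by $C+I$. Factoring $u^{-\frac{C}{2}}=z_{1}^{-\frac{C}{2}}s^{-\frac{C}{2}}t^{-\frac{C}{2}}$ and using commutativity to pull the constant matrix $\Gamma(C+I)$ and the factor $z_{1}^{-\frac{C}{2}}$ outside the double integral, the remaining $s$- and $t$-powers combine with $s^{B-I}$ and $t^{A-I}$ to produce $s^{B-\frac{C}{2}-I}$ and $t^{A-\frac{C}{2}-I}$. Taking the argument $-z_{1}$ with the $J_{C}$ relation gives the first displayed identity, and taking the argument $+z_{1}$ with the $I_{C}$ relation gives the second.

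The main obstacle is justifying the matrix manipulations rather than the bookkeeping: one must check that $u^{-\frac{C}{2}}$ factors as $z_{1}^{-\frac{C}{2}}s^{-\frac{C}{2}}t^{-\frac{C}{2}}$, which holds because $z_{1},s,t>0$ are scalars so the single matrix $C$ exponentiates consistently, and that $\Gamma(C+I)$ and $z_{1}^{-\frac{C}{2}}$ may be moved across $\Gamma^{-1}(A)\Gamma^{-1}(B)$ and under the integral sign, which the hypothesis that all five matrices commute guarantees. The termwise recognition of the series as $J_{C}$ and $I_{C}$ also tacitly relies on the absolute convergence already underlying \eqref{2eq6}, so no new convergence analysis is required.
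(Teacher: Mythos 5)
Your proposal is correct and follows exactly the route the paper intends for this corollary (which it states without proof): specialize the double integral representation \eqref{2eq6} of Theorem 3.2 to the lower parameter $C+I$ and identify ${_{0}F_{1}}(-;C+I;\mp z_{1}st)$ with $\Gamma(C+I)\,(z_{1}st)^{-C/2}J_{C}(2\sqrt{z_{1}st})$ and $\Gamma(C+I)\,(z_{1}st)^{-C/2}I_{C}(2\sqrt{z_{1}st})$ via the series \eqref{j1} and \eqref{j2}. Your explicit verification of the ${_{0}F_{1}}$--Bessel bridge and of the commutativity needed to factor $z_{1}^{-C/2}$ out of the integral supplies precisely the details the paper leaves implicit.
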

\begin{corollary}The following double integral representations:
\begin{align}&\Gamma_{\mathcal{A}}^{H}[(A; x), B, -mI; C+I, C'+I ; -z_1, z_2, z_3]\notag\\
&=m!\,z_{1}^{\frac{-C}{2}}\,[(C'+I)_{m}]^{-1}\, \Gamma^{-1}(A) \Gamma^{-1}(B)\Gamma(C+I)\notag\\
&\times  \int_{x}^{\infty}\int_{0}^{\infty} e^{-t-s} \,t^{A-\frac{C}{2}-I} s^{B-\frac{C}{2}-I} J_{C}(2\sqrt{z_1 st}) L^{(C')}_{m}(z_2 s+ z_3 t)dt ds;
\end{align}
\begin{align}&\Gamma_{\mathcal{A}}^{H}[(A; x), B, -mI; C+I, C'+I ;  z_1, z_2, z_3]\notag\\
&=m!\,z_{1}^{\frac{-C}{2}}\, [(C'+I)_{m}]^{-1}\,\Gamma^{-1}(A) \Gamma^{-1}(B)\Gamma(C+I)\notag\\
&\times  \int_{x}^{\infty}\int_{0}^{\infty} e^{-t-s} \,t^{A-\frac{C}{2}-I} s^{B-\frac{C}{2}-I} I_{C}(2\sqrt{z_1 st}) L^{(C')}_{m}(z_2 s+ z_3 t) dt ds,
\end{align}
where $A$, $B$, $B'$, $C$ and $C'$ are  positive stable and commutative matrices in  $\mathbb{C}^{r\times r}$ such that $A+kI$, $B+kI$, $C+kI$ and  $C'+kI$  are  invertible for all integers $k\geq0$. 
\end{corollary}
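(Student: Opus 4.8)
The plan is to specialize the master integral representation (\ref{2eq6}) by carrying out, \emph{simultaneously}, the two parameter reductions that were used separately in the two preceding corollaries, and then to read off the assertion from the series-level connection formulas that link the two confluent factors to a (modified) Bessel matrix function and to a Laguerre matrix polynomial.

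First I would start from (\ref{2eq6}) and replace $C$ by $C+I$, $B'$ by $-mI$, and $C'$ by $C'+I$, while setting $z_1\mapsto -z_1$ for the first identity and leaving $z_1$ unchanged for the second. Because $A$, $B$, $C$, $C'$ are positive stable and mutually commutative, all matrix powers appearing in the integrand commute, and the integrand takes the form
\[
e^{-t-s}\,t^{A-I}s^{B-I}\,{_{0}F_{1}}(-;C+I;\mp z_1 st)\,{_{1}F_{1}}(-mI;C'+I;z_2 s+z_3 t).
\]

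Next, for the factor with a single denominator parameter I would use the matrix Bessel connection obtained by comparing the defining series (\ref{j1}) of $J_{C}$ (respectively (\ref{j2}) of $I_{C}$) with $\sum_{k\geq 0}(C+I)_{k}^{-1}w^{k}/k!$; writing $(C+I)_{k}^{-1}=\Gamma(C+I)\,\Gamma^{-1}(C+(k+1)I)$ and setting $w=z_1 st$ gives
\[
{_{0}F_{1}}(-;C+I;-z_1 st)=\Gamma(C+I)\,(z_1 st)^{-\frac{C}{2}}\,J_{C}\bigl(2\sqrt{z_1 st}\bigr),
\]
and the companion identity with $I_{C}$ for the argument $+z_1 st$. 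Invoking commutativity I would factor $(z_1 st)^{-C/2}=z_1^{-C/2}\,t^{-C/2}\,s^{-C/2}$, pull $\Gamma(C+I)$ and $z_1^{-C/2}$ out of the double integral, and absorb $t^{-C/2}$, $s^{-C/2}$ into the standing powers so as to obtain $t^{A-\frac{C}{2}-I}$ and $s^{B-\frac{C}{2}-I}$.

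Finally, for the confluent factor I would apply the relation $L_{m}^{(C')}(w)=\frac{(C'+I)_{m}}{m!}\,{_{1}F_{1}}(-mI;C'+I;w)$ recorded in Section~2, that is ${_{1}F_{1}}(-mI;C'+I;z_2 s+z_3 t)=m!\,[(C'+I)_{m}]^{-1}L_{m}^{(C')}(z_2 s+z_3 t)$, and collect all the scalar and matrix prefactors as $m!\,z_1^{-C/2}[(C'+I)_{m}]^{-1}\Gamma^{-1}(A)\Gamma^{-1}(B)\Gamma(C+I)$ in front of the integral. The choice of the minus sign then produces the $J_{C}$ representation and the plus sign the $I_{C}$ representation, which are the two stated formulas. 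The only step demanding genuine care is the matrix Bessel reduction: one must justify the connection formula directly at the matrix level and confirm that $(z_1 st)^{-C/2}$ factorizes and commutes past $t^{A-I}$ and $s^{B-I}$, which is precisely where the commutativity hypotheses on $A$, $B$, $C$ are used; everything else is term-by-term substitution into (\ref{2eq6}).
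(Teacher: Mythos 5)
Your proposal is correct and matches the paper's (implicit) derivation: the corollary is exactly the simultaneous specialization $B'=-mI$, $C\to C+I$, $C'\to C'+I$, $z_1\to\mp z_1$ of the integral representation (\ref{2eq6}), followed by the series-level identities ${_{0}F_{1}}(-;C+I;\mp z_1st)=\Gamma(C+I)(z_1st)^{-C/2}\,J_{C}$ (resp.\ $I_{C}$)$(2\sqrt{z_1st})$ and ${_{1}F_{1}}(-mI;C'+I;\cdot)=m!\,[(C'+I)_{m}]^{-1}L_{m}^{(C')}(\cdot)$, i.e.\ the combination of the reductions used in the two preceding corollaries. Your attention to the commutativity needed to factor $(z_1st)^{-C/2}$ into the powers $t^{A-\frac{C}{2}-I}$, $s^{B-\frac{C}{2}-I}$ is exactly the right point of care, and nothing further is missing.
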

\begin{theorem}Let $A$, $B$, $B'$, $C$ and $C'$ be  positive stable and commutative matrices in  $\mathbb{C}^{r\times r}$ such that $A+kI$, $B+kI$, $B'+kI$, $C+kI$  and  $C'+kI$ are  invertible for all integers $k\geq0$. Then the following triple  integral representation for $\Gamma_{\mathcal{A}}^{H}$ in (\ref{2eq2}) holds true:
\begin{align}
&\Gamma_{\mathcal{A}}^{H}[(A; x), B, B'; C,C' ; z_1, z_2, z_3]\notag\\
&= \Gamma^{-1}(A) \Gamma^{-1}(B) \Gamma^{-1}(B')\Big[\int_{x}^{\infty}\int_{0}^{\infty} \int_{0}^{\infty}e^{-t-s-u}\, t^{A-I} s^{B-I}  u^{B'-I} \notag\\& \times {_{0}F_{1}}(-; C; z_1 st) \,{_{0}F_{1}}(-; C'; z_2 us+z_3ut) dt ds du\Big].\label{x1}
\end{align}
\end{theorem}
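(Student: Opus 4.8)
The plan is to follow exactly the scheme used in establishing the double integral representation (\ref{2eq6}), but now introducing \emph{three} integral representations instead of two. Starting from the defining series (\ref{2eq2}), I would replace each of the three Pochhammer-type factors by its integral form: the incomplete Pochhammer matrix symbol $[A;x]_{m+p}=\Gamma(A+(m+p)I,x)\,\Gamma^{-1}(A)$ via (\ref{1eq5}) and (\ref{1eq8}), and the two ordinary Pochhammer symbols $(B)_{m+n}=\Gamma(B+(m+n)I)\,\Gamma^{-1}(B)$ and $(B')_{n+p}=\Gamma(B'+(n+p)I)\,\Gamma^{-1}(B')$ via the Gamma matrix function (\ref{g1}). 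This produces a triple integral over $(t,s,u)\in[x,\infty)\times[0,\infty)\times[0,\infty)$ whose integrand carries the factor $e^{-t-s-u}\,t^{A+(m+p-1)I}s^{B+(m+n-1)I}u^{B'+(n+p-1)I}$.

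Next I would interchange the order of summation and integration (justified by absolute and uniform convergence of the series together with the positive stability of $A$, $B$, $B'$, which guarantees convergence of the Gamma integrals), and pull the matrix constants $\Gamma^{-1}(A)$, $\Gamma^{-1}(B)$, $\Gamma^{-1}(B')$ out front. After factoring $t^{A-I}s^{B-I}u^{B'-I}$ out of the integrand, the remaining monomial part reads $t^{m+p}s^{m+n}u^{n+p}(C)_m^{-1}(C')_{n+p}^{-1}z_1^m z_2^n z_3^p/(m!\,n!\,p!)$. Since all the matrices commute and the integration variables are scalars, the $m$-summation decouples cleanly as $\sum_{m\ge0}(C)_m^{-1}(z_1 st)^m/m!={_{0}F_{1}}(-;C;z_1 st)$.

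Finally, the residual double sum over $n$ and $p$ takes the form $\sum_{n,p\ge0}(C')_{n+p}^{-1}(z_2 us)^n (z_3 ut)^p/(n!\,p!)$, to which I would apply the summation formula (\ref{l2}) with $f(N)=(C')_N^{-1}$ and with the two scalar arguments taken as $z_2 us$ and $z_3 ut$; the scalar binomial identity makes this step legitimate even though $f$ is matrix-valued. This collapses the double sum into $\sum_{N\ge0}(C')_N^{-1}(z_2 us+z_3 ut)^N/N!={_{0}F_{1}}(-;C';z_2 us+z_3 ut)$, which yields precisely (\ref{x1}). I expect the main technical obstacle to be the rigorous justification of the term-by-term integration, which requires verifying that the triple series of integrals converges uniformly on the relevant domain; once that is in hand, the remaining manipulations are routine given the assumed commutativity.
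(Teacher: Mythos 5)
Your proposal is correct and follows essentially the same route as the paper: the paper likewise replaces $[A;x]_{m+p}$, $(B)_{m+n}$ and $(B')_{n+p}$ by their Gamma-integral representations and then collapses the residual double sum over $n,p$ via the summation formula (\ref{l2}) into ${_{0}F_{1}}(-;C';z_2us+z_3ut)$. The only cosmetic difference is that the paper starts from the intermediate identity (\ref{l1}) already established in the proof of (\ref{2eq6}) and inserts the integral representation of $(B')_{n+p}$ there, rather than re-deriving everything from the defining series (\ref{2eq2}) as you do.
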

\begin{proof}By applying  the integral representation of the Pochhammer matrix symbol  $(B')_{n+p}$ in (\ref{l1}) and through applying the  summation formula (\ref{l2}) in tandem, one arrives at (\ref{x1}).
\end{proof}
\begin{corollary}The following triple integral representations holds true:
\begin{align}&\Gamma_{\mathcal{A}}^{H}[(A; x), B, B'; C+I, C' ; -z_1, z_2, z_3]\notag\\
&=z_{1}^{\frac{-C}{2}} \, \Gamma^{-1}(A) \Gamma^{-1}(B)\Gamma^{-1}(B')\Gamma(C+I)\notag\\
&\times  \int_{x}^{\infty}\int_{0}^{\infty}\int_{0}^{\infty} e^{-t-s-u} \,t^{A-\frac{C}{2}-I} s^{B-\frac{C}{2}-I} u^{B'-I}\notag\\&\times  J_{C}(2\sqrt{z_1 st}) \,{_{0}F_{1}}(-; C'; z_2 us+z_3ut)dt dsdu;
\end{align}
\begin{align}&\Gamma_{\mathcal{A}}^{H}[(A; x), B, B'; C+I, C' ; z_1, z_2, z_3]\notag\\
&=z_{1}^{\frac{-C}{2}} \, \Gamma^{-1}(A) \Gamma^{-1}(B)\Gamma^{-1}(B')\Gamma(C+I)\notag\\
&\times  \int_{x}^{\infty}\int_{0}^{\infty}\int_{0}^{\infty} e^{-t-s-u} \,t^{A-\frac{C}{2}-I} s^{B-\frac{C}{2}-I} u^{B'-I}\notag\\&\times  I_{C}(2\sqrt{z_1 st}) \,{_{0}F_{1}}(-; C'; z_2 us+z_3ut)dt dsdu,
\end{align}
where $A$, $B$, $B'$, $C$ and $C'$ are  positive stable and commutative matrices in  $\mathbb{C}^{r\times r}$ such that $A+kI$, $B+kI$, $C+kI$ and  $C'+kI$  are  invertible for all integers $k\geq0$. 
\end{corollary}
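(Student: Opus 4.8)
The plan is to obtain both representations directly from the triple integral formula \eqref{x1} of the preceding theorem by specializing the denominator parameter $C\mapsto C+I$ and then recognizing the resulting ${_0F_1}$ factor as a Bessel, respectively modified Bessel, matrix function. Thus the corollary is not proved from scratch; it is a parameter specialization of \eqref{x1} followed by a single change of notation, exactly paralleling the passage from the triple integral to its Bessel form in the corollaries that follow the two-variable integral representation.

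First I would record the two ``${_0F_1}$-to-Bessel'' dictionary entries in matrix form. Starting from the series \eqref{j1} for $J_{A}(z)$ and writing $z=2\sqrt{w}$, the factor $(z/2)^{A+2mI}=w^{A/2}w^{m}$ splits off a common power $w^{A/2}$, while the identity $\Gamma^{-1}\big(C+(m+1)I\big)=(C+I)_{m}^{-1}\,\Gamma^{-1}(C+I)$, valid because $C$ is positive stable and commutes with itself, converts the remaining gamma matrices into inverse Pochhammer symbols. This yields
\[
J_{C}\big(2\sqrt{w}\big)=w^{\frac{C}{2}}\,\Gamma^{-1}(C+I)\;{_0F_1}(-;C+I;-w),
\]
equivalently ${_0F_1}(-;C+I;-w)=w^{-\frac{C}{2}}\,\Gamma(C+I)\,J_{C}(2\sqrt{w})$. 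Running the same computation with the modified Bessel function \eqref{j2}, where the sign $(-1)^{m}$ in \eqref{j1} is cancelled by the factor $e^{i\pi m}$ arising from the branch $z\mapsto z e^{i\pi/2}$, produces the companion relation ${_0F_1}(-;C+I;w)=w^{-\frac{C}{2}}\,\Gamma(C+I)\,I_{C}(2\sqrt{w})$.

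With $w=z_{1}st$ these identities are substituted into \eqref{x1} after replacing $C$ by $C+I$. For the first representation I take the function argument $-z_{1}$, so that the ${_0F_1}$ factor acquires argument $-z_{1}st=-w$ and the $J_{C}$ relation applies; for the second I keep $z_{1}$ and invoke the $I_{C}$ relation. In either case I factor the scalar power $(z_{1}st)^{-C/2}=z_{1}^{-C/2}\,s^{-C/2}\,t^{-C/2}$, which is legitimate because $z_{1},s,t$ are positive reals and all three factors are powers of the single matrix $C$, and then merge $t^{-C/2}$ and $s^{-C/2}$ into $t^{A-I}$ and $s^{B-I}$ to obtain the exponents $t^{A-\frac{C}{2}-I}$ and $s^{B-\frac{C}{2}-I}$. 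Pulling the constant matrices $z_{1}^{-C/2}$ and $\Gamma(C+I)$ outside the integral and reordering with the commuting factors $\Gamma^{-1}(A)\,\Gamma^{-1}(B)\,\Gamma^{-1}(B')$ reproduces exactly the stated right-hand sides.

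The main obstacle is bookkeeping rather than analysis: one must justify moving the matrix factor $(z_{1}st)^{-C/2}$ under the integral sign and splitting it across the variables $z_{1},s,t$, which is precisely where the mutual commutativity of $A,B,B',C,C'$ and the positivity of $z_{1},s,t$ are used, and one must check that replacing ${_0F_1}$ by its Bessel series preserves the absolute and uniform convergence that licensed the interchange of summation and integration in the proof of \eqref{x1}. The branch prescription in \eqref{j2} for $I_{C}$ requires only minor care to confirm the correct sign in the second formula.
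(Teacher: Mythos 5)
Your proposal is correct and takes essentially the same route the paper intends: the corollary is stated without proof as an immediate consequence of the triple integral representation \eqref{x1}, obtained by replacing $C$ with $C+I$ there and rewriting the resulting factor via the matrix Bessel dictionary $J_{C}(2\sqrt{w})=w^{\frac{C}{2}}\,\Gamma^{-1}(C+I)\,{_0F_1}(-;C+I;-w)$ and $I_{C}(2\sqrt{w})=w^{\frac{C}{2}}\,\Gamma^{-1}(C+I)\,{_0F_1}(-;C+I;w)$. Your derivation of these two identities from \eqref{j1} and \eqref{j2}, and the splitting of $(z_{1}st)^{-\frac{C}{2}}$ across the integrand, simply supplies the bookkeeping the paper leaves implicit.
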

\begin{theorem}Let $A$, $B$, $B'$, $C$ and $C'$ be  positive stable and commutative  matrices in  $\mathbb{C}^{r\times r}$ such that $A+kI$, $B+kI$,  $C+kI$  and  $C'+kI$ are  invertible for all integers $k\geq0$. Then the  following reduction formula for $\Gamma_{\mathcal{A}}^{H}$  holds true:
\begin{align}
&\Gamma_{\mathcal{A}}^{H}[(A; x), B, B'; C,B' ; z_1, z_2, z_3]\notag\\
&= (1-z_3)^{-A} (1-z_2)^{-B}\notag\\
&\times {_{2}\Gamma_{1}}[(A; x(1-z_3)), B; C; \frac{z_1}{(1-z_2)(1-z_3)}].\label{x1i}
\end{align}
\end{theorem}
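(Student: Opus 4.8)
The plan is to exploit the coincidence of parameters $C'=B'$, which collapses the factor $(B')_{n+p}(C')^{-1}_{n+p}=(B')_{n+p}(B')^{-1}_{n+p}=I$ in the defining series (\ref{2eq2}). After this cancellation the summand no longer depends on $n$ and $p$ through $B'$ at all, and since the matrices are assumed commutative the remaining triple series factors into an outer sum over $m$ carrying $(C)^{-1}_m\,z_1^m/m!$ times two independent inner sums, one over $n$ and one over $p$:
\begin{align*}
\Gamma_{\mathcal{A}}^{H}[(A;x),B,B';C,B';z_1,z_2,z_3]=\sum_{m\ge0}(C)^{-1}_m\frac{z_1^m}{m!}\Big(\sum_{n\ge0}(B)_{m+n}\frac{z_2^n}{n!}\Big)\Big(\sum_{p\ge0}[A;x]_{m+p}\frac{z_3^p}{p!}\Big).
\end{align*}

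First I would evaluate the $n$-sum. Writing $(B)_{m+n}=(B)_m(B+mI)_n$ and invoking the matrix binomial series $\sum_{n\ge0}(B+mI)_n z_2^n/n!=(1-z_2)^{-(B+mI)}$ gives $\sum_{n\ge0}(B)_{m+n}z_2^n/n!=(B)_m(1-z_2)^{-B}(1-z_2)^{-m}$. The main step is the $p$-sum. Here I would replace $[A;x]_{m+p}$ by its integral form coming from (\ref{1eq5}) and (\ref{1eq8}), namely $\Gamma^{-1}(A)\int_x^{\infty}e^{-t}t^{A+(m+p-1)I}\,dt$, interchange summation and integration, and recognise $\sum_{p\ge0}(z_3 t)^p/p!=e^{z_3 t}$. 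This converts the exponential in the integrand into $e^{-(1-z_3)t}$. The substitution $u=(1-z_3)t$ then rescales the power of $t$ by $(1-z_3)^{-(A+mI)}$ and, crucially, shifts the lower limit from $x$ to $x(1-z_3)$, producing $\Gamma(A+mI,x(1-z_3))$. Hence, by (\ref{1eq8}),
\begin{align*}
\sum_{p\ge0}[A;x]_{m+p}\frac{z_3^p}{p!}=(1-z_3)^{-A}(1-z_3)^{-m}\,[A;x(1-z_3)]_m.
\end{align*}

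Finally I would reassemble the three pieces. The factors $(1-z_2)^{-B}$ and $(1-z_3)^{-A}$ are independent of $m$ and come out of the outer sum; the scalar factors $(1-z_2)^{-m}(1-z_3)^{-m}$ combine with $z_1^m$ into $\big(z_1/((1-z_2)(1-z_3))\big)^m$; and the surviving summand $[A;x(1-z_3)]_m (B)_m (C)^{-1}_m/m!$ is precisely the general term of ${_2\Gamma_1}$ in (\ref{1eq10}). This produces (\ref{x1i}).

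I expect the $p$-sum to be the delicate point. One must justify the interchange of the infinite series with the incomplete integral, which is legitimate by absolute and uniform convergence for $|z_3|<1$, and one must track the matrix ordering carefully so that the commutativity hypothesis genuinely allows $\Gamma^{-1}(A)$, the powers of $(1-z_3)$, and $\Gamma(A+mI,x(1-z_3))$ to be combined into $(1-z_3)^{-A}[A;x(1-z_3)]_m$. The $n$-sum likewise requires $|z_2|<1$ for the binomial series, and the concluding rearrangement of the outer $m$-sum into an ${_2\Gamma_1}$ needs $|z_1|<|(1-z_2)(1-z_3)|$; all of these are the natural convergence constraints and do not affect the formal identity of the matrix functions.
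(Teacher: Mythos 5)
Your proof is correct, but it follows a genuinely different route from the paper's. The paper works at the level of integral representations: it sets $B'=C'$ in the double integral (\ref{2eq6}), where the Kummer function collapses to ${_1F_1}(B';B';z_2s+z_3t)=e^{z_2s+z_3t}$, performs the substitutions $t(1-z_3)=u$, $s(1-z_2)=v$, integrates out $v$ to produce ${_1F_1}(B;C;\cdot)$, and then invokes the known representation from \cite{Ab},
\begin{align*}
{_{2}\Gamma_{1}}\big[[A;x],B;C;z\big]=\Gamma^{-1}(A)\int_{x}^{\infty}e^{-t}\,t^{A-I}\,{_1F_1}(B;C;zt)\,dt,
\end{align*}
to read off the result. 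You instead work directly with the series (\ref{2eq2}): the cancellation $(B')_{n+p}(B')^{-1}_{n+p}=I$, the factorization of the triple sum, the binomial summation of the $n$-series, and the key scaling identity
\begin{align*}
\sum_{p\ge0}[A;x]_{m+p}\frac{z_3^p}{p!}=(1-z_3)^{-A-mI}\,[A;x(1-z_3)]_m,
\end{align*}
which you derive from the incomplete gamma integral with the same substitution $u=(1-z_3)t$ that the paper uses inside its double integral. Your version is more self-contained (it needs neither the double integral representation (\ref{2eq6}) nor the ${_2\Gamma_1}$ integral formula of \cite{Ab}) and it isolates the mechanism of the reduction — the shift of the incomplete argument from $x$ to $x(1-z_3)$ — as a clean one-variable lemma about the incomplete Pochhammer symbol; the price is that you must explicitly justify rearranging the triple series and interchanging sum and integral, and record the convergence constraints $|z_2|<1$, $|z_3|<1$, $|z_1|<|(1-z_2)(1-z_3)|$, which you do. The paper's route is shorter once its Theorem 3.2 and the cited result of \cite{Ab} are in hand, since the integrals absorb those convergence issues implicitly.
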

\begin{proof}Replacing  $B'=C'$ in (\ref{2eq6}),  the contrived  equation becomes
\begin{align}
&\Gamma_{\mathcal{A}}^{H}[(A; x), B, B'; C,B'; z_1, z_2, z_3]\notag\\&= \Gamma^{-1}(A) \Gamma^{-1}(B) \Big[\int_{x}^{\infty} \int_{0}^{\infty}e^{-t(1-z_3)-s(1-z_2)}\, t^{A-I} s^{B-I}  {_{0}F_{1}}(-; C; z_1 st)\,  dt ds\Big],\label{ii1}
\end{align}
where $F(-; -;z)= e^{z}$. Putting $t(1-z_3)= u,$  $s(1-z_2)=v$ and $dt= \frac{du}{(1-z_3)}$, $ds= \frac{dv}{(1-z_2)}$ in (\ref{ii1}),  the following simplified equation emerges:
\begin{align}
&\Gamma_{\mathcal{A}}^{H}[(A; x), B, B'; C,B'; z_1, z_2, z_3]\notag\\& =  \Gamma^{-1}(A) (1-z_3)^{-A}(1-z_2)^{-B}\notag\\ &\times \Big[\int_{x(1-z_3)}^{\infty}e^{-u}\, u^{A-I}   {_{1}F_{1}}(B; C; \frac{z_1u}{(1-z_2)(1-z_3)})\,  du\Big].\label{ii2}
\end{align}
Finally, using known result in \cite{Ab}:
\begin{align}
{_{2}\Gamma_{1}}[[A; x], B;C;z]= \Gamma^{-1}(A) \int_{x}^{\infty} e^{-t} t^{A-I} {_{1}F_{1}}(B; C; zt) dt
\end{align}
in (\ref{ii2}), we are led to the desired result (\ref{x1i}).
\end{proof}
\begin{theorem}\label{h1}Let $B'+sI$  be  invertible  for all integers $s\geq0$. Then the following recursion formula holds  true for  $\Gamma_{\mathcal{A}}^{H}$:
\begin{align}
&\Gamma_{\mathcal{A}}^{H}[(A; x), B, B'+sI; C,C' ; z_1, z_2, z_3]\notag\\
&=\Gamma_{\mathcal{A}}^{H}[(A; x), B, B'; C, C' ; z_1, z_2, z_3]\notag\\&\quad+ z_2{B}{C'}^{-1}\Big[\sum_{k=1}^{s}\Gamma_{\mathcal{A}}^{H}[(A; x), B+I, B'+kI; C, C'+I ; z_1, z_2, z_3]\Big]\notag\\
&\quad+ z_3{A}{C'}^{-1}\Big[\sum_{k=1}^{s}\Gamma_{\mathcal{A}}^{H}[(A+I; x), B, B'+kI; C,  C'+I; z_1, z_2, z_3]\Big].\label{r11}
\end{align}
Furthermore, if  $B'-kI$ are invertible for integers $k\leq s$, then 
\begin{align}&\Gamma_{\mathcal{A}}^{H}[(A; x), B, B'-sI; C,C' ;z_1, z_2, z_3]\notag\\
&=\Gamma_{\mathcal{A}}^{H}[(A; x), B, B'; C,C' ;z_1, z_2, z_3]\notag\\&\quad- z_2{B}{C'}^{-1}\Big[\sum_{k=0}^{s-1}\Gamma_{\mathcal{A}}^{H}[(A; x), B+I, B'-kI; C, C'+I ; z_1, z_2, z_3]\Big]\notag\\
&\quad- z_3{A}{C'}^{-1}\Big[\sum_{k=0}^{s-1}\Gamma_{\mathcal{A}}^{H}[(A+I; x), B, B'-kI; C, C'+I; z_1, z_2, z_3]\Big].\label{r12}
\end{align}
where $A$, $B$, $B'$, $C$  and $C'$ are  positive stable and commutative matrices in $\mathbb{C}^{r\times r}$ such that $C+kI$ and $C'+kI$  are  invertible  matrices for all  integers $k\geq 0$.
\end{theorem}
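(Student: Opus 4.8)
The plan is to reduce the entire statement to a single \emph{contiguous relation} (the case $s=1$) and then to obtain the general $s$ by telescoping. First I would establish, directly from the defining series (\ref{2eq2}), the identity
\begin{align}
&\Gamma_{\mathcal{A}}^{H}[(A; x), B, B'+I; C,C' ; z_1, z_2, z_3]-\Gamma_{\mathcal{A}}^{H}[(A; x), B, B'; C,C' ; z_1, z_2, z_3]\notag\\
&= z_2 B{C'}^{-1}\,\Gamma_{\mathcal{A}}^{H}[(A; x), B+I, B'+I; C, C'+I ; z_1, z_2, z_3]\notag\\
&\quad+ z_3 A{C'}^{-1}\,\Gamma_{\mathcal{A}}^{H}[(A+I; x), B, B'+I; C, C'+I ; z_1, z_2, z_3].\label{contig}
\end{align}
Since $B'$ enters (\ref{2eq2}) only through the factor $(B')_{n+p}$, the difference on the left collapses, summand by summand, to $(B'+I)_{n+p}-(B')_{n+p}$. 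Using commutativity one has the elementary Pochhammer identity $(B'+I)_{k}-(B')_{k}=k\,(B'+I)_{k-1}$, so with $k=n+p$ the difference acquires the scalar weight $(n+p)$, which I would split as $n+p$ (the $n=p=0$ term drops out automatically).

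For the $n$-part I would use $n/n!=1/(n-1)!$ and shift $n\mapsto n+1$; the identities $(B)_{m+n+1}=B\,(B+I)_{m+n}$ and $(C')^{-1}_{n+p+1}=(C')^{-1}(C'+I)^{-1}_{n+p}$ (both valid because the matrices commute) then extract the factor $z_2 B{C'}^{-1}$ and turn the remaining sum into $\Gamma_{\mathcal{A}}^{H}[(A; x), B+I, B'+I; C, C'+I]$. For the $p$-part I would symmetrically shift $p\mapsto p+1$ and invoke $[A;x]_{m+p+1}=A\,[A+I;x]_{m+p}$ together with the same step on $(C')^{-1}$, producing $z_3 A{C'}^{-1}$ and the function $\Gamma_{\mathcal{A}}^{H}[(A+I; x), B, B'+I; C, C'+I]$. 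The most delicate point is the incomplete-Pochhammer identity $[A;x]_{k+1}=A\,[A+I;x]_{k}$: it is not purely formal, since it rests on the definition (\ref{1eq8}), on $\Gamma(A+I)=A\Gamma(A)$, and on the fact that $A$ commutes with $\Gamma(A)$ and with $\Gamma(A+(k+1)I,x)$, so that $A\,\Gamma^{-1}(A+I)=\Gamma^{-1}(A)$ may be used to absorb the index shift. This is the step I expect to require the most care, and the commutativity hypotheses are precisely what licenses pulling $A$, $B$ and ${C'}^{-1}$ outside the triple sum.

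With (\ref{contig}) in hand the remainder is a telescoping induction. Replacing $B'$ by $B'+(k-1)I$ in (\ref{contig}) and summing over $k=1,\dots,s$ makes the left-hand side collapse to $\Gamma_{\mathcal{A}}^{H}[\ldots B'+sI\ldots]-\Gamma_{\mathcal{A}}^{H}[\ldots B'\ldots]$, which yields (\ref{r11}) at once. For (\ref{r12}) I would instead replace $B'$ by $B'-(k+1)I$ in (\ref{contig}) and sum over $k=0,\dots,s-1$; the left side then telescopes to $\Gamma_{\mathcal{A}}^{H}[\ldots B'\ldots]-\Gamma_{\mathcal{A}}^{H}[\ldots B'-sI\ldots]$, and a rearrangement gives (\ref{r12}). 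The hypothesis that $B'-kI$ be invertible for $k\le s$ is exactly what guarantees that every Pochhammer symbol occurring in the lowered sums is well defined, so no convergence or invertibility issue arises in the telescoping step.
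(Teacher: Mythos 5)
Your proposal is correct and follows essentially the same route as the paper: the paper also first establishes the one-step contiguous relation
$\Gamma_{\mathcal{A}}^{H}[(A;x),B,B'+I;C,C';z_1,z_2,z_3]=\Gamma_{\mathcal{A}}^{H}[(A;x),B,B';C,C';z_1,z_2,z_3]+z_2BC'^{-1}\Gamma_{\mathcal{A}}^{H}[(A;x),B+I,B'+I;C,C'+I;z_1,z_2,z_3]+z_3AC'^{-1}\Gamma_{\mathcal{A}}^{H}[(A+I;x),B,B'+I;C,C'+I;z_1,z_2,z_3]$
(using the equivalent transformation $(B'+I)_{n+p}=B'^{-1}(B')_{n+p}(B'+(n+p)I)$, which is exactly your identity $(B'+I)_{k}-(B')_{k}=k(B'+I)_{k-1}$ rewritten), and then iterates it $s$ times upward, and with $B'\mapsto B'-I$ downward, which is precisely your telescoping argument.
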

\begin{proof}
 Using  integral formula  (\ref{2eq6}) of the incomplete Srivastava's  triple hypergeometric  matrix function $\Gamma_{\mathcal{A}}^{H}$  and the transformation  of the said equation
\begin{align*}
(B'+I)_{n+p}= B'^{-1}(B')_{n+p}(B'+(n+p)I),
\end{align*}
we get the following contiguous  matrix relation:
\begin{align}
&\Gamma_{\mathcal{A}}^{H}[(A; x), B, B'+I; C, C' ;z_1, z_2, z_3]\notag\\
&=\Gamma_{\mathcal{A}}^{H}[(A; x), B, B'; C, C' ; z_1, z_2, z_3]\notag\\&\quad+ z_2{B}{C'}^{-1}\Big[\Gamma_{\mathcal{A}}^{H}[(A; x), B+I, B'+I; C, C'+I ; z_1, z_2, z_3]\Big]\notag\\
&\quad+ z_3{A}{C'}^{-1}\Big[\Gamma_{\mathcal{A}}^{H}[(A+I; x), B, B'+I; C, C'+I; z_1, z_2, z_3]\Big].
\label{3eqp1}
\end{align}  
Application of  this contiguous matrix relation to the  matrix function $\Gamma_{\mathcal{A}}^{H}$  with the matrix parameter $B'+2I$, yields

\begin{align}
&\Gamma_{\mathcal{A}}^{H}[(A; x), B, B'+2I; C,C'; z_1, z_2, z_3]\notag\\
&=\Gamma_{\mathcal{A}}^{H}[(A; x), B, B'; C,C' ; z_1, z_2, z_3]\notag\\&\quad+ z_2{B}{C'}^{-1}\Big[\sum_{k=1}^{2}\Gamma_{\mathcal{A}}^{H}[(A; x), B+I, B'+kI; C, C'+I ; z_1, z_2, z_3]\Big]\notag\\
&\quad+ z_3{A}{C'}^{-1}\Big[\sum_{k=1}^{2}\Gamma_{\mathcal{A}}^{H}[(A+I; x), B, B'+kI; C,  C'+I; z_1, z_2, z_3]\Big].
\label{3eqp2}
\end{align}  
Repeating this process $s$ times,  we obtain (\ref{r11}). 

 For  the proof of (\ref{r12}),  replace the matrix  $B'$ with $B'-I$ in (\ref{3eqp1}). As $B'-I$ is invertible, this gives 
\begin{align}
&\Gamma_{\mathcal{A}}^{H}[(A; x), B, B'- I; C,C'; z_1, z_2, z_3]\notag\\
&=\Gamma_{\mathcal{A}}^{H}[(A; x), B, B'; C,C'; z_1, z_2, z_3]\notag\\&\quad- z_2{B}{C'}^{-1}\Big[\Gamma_{\mathcal{A}}^{H}[(A; x), B+I, B'; C, C'+I ; z_1, z_2, z_3]\Big]\notag\\
&\quad- z_3{A}{C'}^{-1}\Big[\Gamma_{\mathcal{A}}^{H}[(A+I; x), B, B'; C, C'+I; z_1, z_2, z_3]\Big].\label{o1}
\end{align}
Iteratively, we get (\ref{r12}).  This finishes the proof of  theorem (\ref{h1}).
\end{proof}
Other recursion formulas for the matrix functions $\Gamma_{\mathcal{A}}^{H}$ about the matrix parameter  $B'$ can be obtained as follows:
\begin{theorem}Let $B'+sI$  be  invertible  for all  integers $s\geq0$. Then the following recursion formula holds  true for  $\Gamma_{\mathcal{A}}^{H}$:
\begin{align}
&\Gamma_{\mathcal{A}}^{H}[(A; x), B, B'+sI; C, C' ; z_1, z_2, z_3]\notag\\
&=\sum_{k_1+k_2\leq s}^{}{s\choose k_1, k_2}z_{2}^{k_1} z_{3}^{k_2}{(A)_{k_2}}(B)_{k_1}(C')^{-1}_{k_1+k_2} \,\notag\\
&\quad\times\Big[\Gamma_{\mathcal{A}}^{H}[(A+k_2 I; x), B+k_1I, B'+(k_1+k_2)I; C,  C'+(k_1+k_2)I; z_1, z_2, z_3]\Big].\label{3eqh1}
\end{align}
Furthermore, if  $B'-kI$ are invertible for  integers $k\leq s$, then 
\begin{align}&\Gamma_{\mathcal{A}}^{H}[(A; x), B, B'-sI; C, C'; z_1, z_2, z_3]\notag\\
&=\sum_{k_1+k_2\leq s}^{}{s\choose k_1, k_2}(-z_{2})^{k_1} (-z_{3})^{k_2}{(A)_{k_2}}(B)_{k_1}(C')^{-1}_{k_1+k_2}\,\notag\\
&\quad\times\Big[\Gamma_{\mathcal{A}}^{H}[(A+k_2 I; x), B+k_1I, B'; C, C'+(k_1+k_2 )I; z_1, z_2, z_3]\Big],\label{3eqh2}
\end{align}
where $A$, $B$, $B'$, $C$ and $C'$ are positive stable and commutative matrices in $\mathbb{C}^{r\times r}$ such that $C+kI$  and $C'+kI$  are invertible  matrices for all integers $k\geq 0$.
\end{theorem}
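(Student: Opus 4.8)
The plan is to establish both \eqref{3eqh1} and \eqref{3eqh2} directly from the defining series \eqref{2eq2}, reducing each to a finite Pochhammer summation in the single commuting matrix $B'$; because every parameter shift on the right-hand sides occurs inside one matrix ($A$, $B$, $C'$ or $B'$ separately), the relevant scalar difference calculus lifts verbatim to the matrix setting.

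First I would substitute the series \eqref{2eq2} into each $\Gamma_{\mathcal{A}}^{H}$ appearing on the right of \eqref{3eqh1}. In the $(k_1,k_2)$-summand the incomplete symbol is $[A+k_2I;x]_{m+p}$ and the Pochhammer factors are $(B+k_1I)_{m+n}$, $(B'+(k_1+k_2)I)_{n+p}$, $(C'+(k_1+k_2)I)^{-1}_{n+p}$. The prefactors $(A)_{k_2}$, $(B)_{k_1}$, $(C')^{-1}_{k_1+k_2}$ are exactly what is needed to absorb these shifts: from \eqref{1eq8} and $(A)_{k_2}=\Gamma(A+k_2I)\Gamma^{-1}(A)$ one gets $(A)_{k_2}[A+k_2I;x]_{m+p}=[A;x]_{m+p+k_2}$, while the splitting rule $(M)_{j+\ell}=(M)_{j}(M+jI)_{\ell}$ yields $(B)_{k_1}(B+k_1I)_{m+n}=(B)_{m+n+k_1}$ and $(C')^{-1}_{k_1+k_2}(C'+(k_1+k_2)I)^{-1}_{n+p}=(C')^{-1}_{n+p+k_1+k_2}$. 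Each step uses only that functions of a single matrix commute, together with the invertibility of $C'+kI$ and $B'+kI$.

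Next I would reindex by $n'=n+k_1$, $p'=p+k_2$, so that $z_2^{k_1}z_2^{n}=z_2^{n'}$ and $z_3^{k_2}z_3^{p}=z_3^{p'}$, the incomplete and complete symbols become $[A;x]_{m+p'}$, $(B)_{m+n'}$, $(C')^{-1}_{n'+p'}$, and $(B'+(k_1+k_2)I)_{n+p}$ becomes $(B'+(k_1+k_2)I)_{n'+p'-k_1-k_2}$ (terms with $k_1>n'$ or $k_2>p'$ drop out automatically, the associated falling factorial vanishing). Interchanging the finite $(k_1,k_2)$-sum with the $(m,n',p')$-sum and factoring the multinomial coefficient as ${s\choose k_1,k_2}=\binom{s}{j}\binom{j}{k_1}$ with $j=k_1+k_2$, the inner sum over $k_1$ collapses by the Vandermonde convolution $\sum_{k_1}\binom{j}{k_1}\frac{n'!}{(n'-k_1)!}\frac{p'!}{(p'-(j-k_1))!}=\frac{N!}{(N-j)!}$, $N=n'+p'$. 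Everything then hinges on the single identity
\[
\sum_{j=0}^{s}\binom{s}{j}\frac{N!}{(N-j)!}\,(B'+jI)_{N-j}=(B'+sI)_{N},
\]
which I would obtain by writing the shift $B'\mapsto B'+sI$ as $E^{s}=(I+\Delta)^{s}$ with $\Delta f(B')=f(B'+I)-f(B')$, computing $\Delta\big[(B')_{N}\big]=N\,(B'+I)_{N-1}$ and hence $\Delta^{j}\big[(B')_{N}\big]=\frac{N!}{(N-j)!}(B'+jI)_{N-j}$, and expanding by the binomial theorem for the commuting operators $I$ and $\Delta$. Recognising the collapsed coefficient as $(B'+sI)_{n'+p'}$ reconstitutes the series \eqref{2eq2} for $\Gamma_{\mathcal{A}}^{H}[(A;x),B,B'+sI;C,C';z_1,z_2,z_3]$, which is \eqref{3eqh1}.

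For \eqref{3eqh2} I would repeat the computation verbatim; now the inner function carries $B'$ unshifted and the factors $(-z_2)^{k_1}(-z_3)^{k_2}$ contribute $(-1)^{k_1+k_2}$, so the coefficient collapses instead to $\sum_{j=0}^{s}(-1)^{j}\binom{s}{j}\frac{N!}{(N-j)!}(B')_{N-j}$, which equals $(B'-sI)_{N}$ by the mirror identity coming from $E^{-s}=(I-\delta)^{s}$ with the backward difference $\delta f(B')=f(B')-f(B'-I)$ and $\delta\big[(B')_{N}\big]=N\,(B')_{N-1}$; here the hypothesis that $B'-kI$ is invertible for $k\le s$ guarantees the left-hand series is well defined. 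I expect the main obstacle to be bookkeeping rather than depth: one must check that every shift is confined to a single matrix so that the operators $E$, $\Delta$, $\delta$ act on one commuting argument and the scalar difference identities transfer, and one must track the reindexing so that the triple sum reassembles into \eqref{2eq2}. A less computational alternative is induction on $s$ starting from the contiguous relation \eqref{3eqp1} and collecting terms by the multinomial Pascal rule, but the difference-operator route keeps the combinatorics transparent.
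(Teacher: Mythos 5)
Your proof is correct, but it takes a genuinely different route from the paper's. The paper proves (\ref{3eqh1}) by induction on $s$: the case $s=1$ is the contiguous relation (\ref{3eqp1}), itself obtained from the integral representation (\ref{2eq6}) and the transformation $(B'+I)_{n+p}=B'^{-1}(B')_{n+p}(B'+(n+p)I)$; the inductive step applies that relation to every term of the induction hypothesis and recombines the three resulting families of terms via Pascal's identity, and (\ref{3eqh2}) is then dispatched "in a similar manner" --- i.e.\ exactly the alternative you mention in your closing sentence. Your argument instead works directly with the defining series (\ref{2eq2}), and all of its ingredients check out: the absorption identities $(A)_{k_2}[A+k_2I;x]_{m+p}=[A;x]_{m+p+k_2}$, $(B)_{k_1}(B+k_1I)_{m+n}=(B)_{m+n+k_1}$ and $(C')^{-1}_{k_1+k_2}(C'+(k_1+k_2)I)^{-1}_{n+p}=(C')^{-1}_{n+p+k_1+k_2}$ are legitimate because every factor is a function of a single matrix (so everything in sight commutes) and the hypotheses supply the needed invertibility; the Vandermonde collapse of the inner $k_1$-sum is right; and the terminating identity $\sum_{j}\binom{s}{j}\frac{N!}{(N-j)!}(B'+jI)_{N-j}=(B'+sI)_{N}$, with $N!/(N-j)!$ read as the falling factorial (zero for $j>N$), follows from your operator expansion $E^{s}=(I+\Delta)^{s}$ since $I$, $\Delta$, $E$ commute and only polynomials in the single matrix $B'$ are involved; the mirror identity for $E^{-s}=(I-\delta)^{s}$ handles (\ref{3eqh2}). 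As for what each approach buys: the paper's induction is short because it recycles machinery already set up (the integral representation and the contiguous relation), but it keeps the combinatorics implicit and proves only the first formula in detail; your computation is self-contained at the level of the series, treats (\ref{3eqh1}) and (\ref{3eqh2}) uniformly in a single calculation, makes visible why precisely these multinomial coefficients and parameter shifts occur, and shows incidentally that the invertibility of $B'-kI$ is never used in your proof of (\ref{3eqh2}) (the collapse to $(B'-sI)_{N}$ is a polynomial identity), whereas the paper's route genuinely needs it to run the contiguous relation downward.
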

\begin{proof}
The proof of (\ref{3eqh1}) is based upon the principle of mathematical induction on  $s\in\mathbb{N}$. For $s=1$, the result (\ref{3eqh1}) is verifiably  true. Suppose  (\ref{3eqh1}) is true for $s=t$, that is,
 \begin{align}
&\Gamma_{\mathcal{A}}^{H}[(A; x), B, B'+tI; C, C' ; z_1, z_2, z_3]\notag\\
&=\sum_{k_1+k_2\leq t}^{}{t\choose k_1, k_2}z_{2}^{k_1} z_{3}^{k_2}{(A)_{k_2}}(B)_{k_1}\,(C')^{-1}_{k_1+k_2}\,\notag\\
&\quad\times\Big[\Gamma_{\mathcal{A}}^{H}[(A+k_2 I; x), B+k_1I, B'+(k_1+k_2)I; C, C'+(k_1+k_2) I; z_1, z_2, z_3]\Big],
 \label{3eqp3}
 \end{align}
 Replacing $B'$ with $B'+I$ in (\ref{3eqp3}) and using the contiguous matrix relation (\ref{3eqp1}),  the equation becomes
 \begin{align}
 &\Gamma_{\mathcal{A}}^{H}[(A; x), B, B'+(t+1)I; C, C' ; z_1, z_2, z_3]\notag\\
 &=\sum_{k_1+k_2\leq t}^{}{t\choose k_1, k_2}z_{2}^{k_1} z_{3}^{k_2}{(A)_{k_2}}(B)_{k_1}\,(C')^{-1}_{k_1+k_2}\,\notag\\
 &\quad\times\Big[\Gamma_{\mathcal{A}}^{H}[(A+k_2 I; x), B+k_1I, B'+(k_1+k_2)I; C, C'+(k_1+k_2) I; z_1, z_2, z_3]\notag\\&\quad+z_2{(B+k_1I)}(C'+(k_1+k_2)I)^{-1}\notag\\&\quad\times \Gamma_{\mathcal{A}}^{H}[(A+k_2I;x), B+k_1I+I, B'+(k_1+k_2+1)I; C, C'+(k_1+k_2+1)I ; z_1, z_2, z_3]\notag\\
 &\quad+z_3 {(A+k_2I)}(C'+(k_1+k_2)I)^{-1}\notag\\&\quad \times \Gamma_{\mathcal{A}}^{H}[(A+(k_2+1)I;x), B+k_1I, B'+(k_1+k_2+1)I; C, C'+(k_1+k_2+1)I; z_1, z_2, z_3]\Big].
 \label{3eqp4}
 \end{align}
 Simplifying, (\ref{3eqp4}) takes the form
 \begin{align}
 &\Gamma_{\mathcal{A}}^{H}[(A; x), B, B'+(t+1)I; C, C' ; z_1, z_2, z_3]\notag\\
 &=\sum_{k_1+k_2\leq t}^{}{t\choose k_1, k_2}z_{2}^{k_1} z_{3}^{k_2}{(A)_{k_2}}(B)_{k_1}\,(C')^{-1}_{k_1+k_2}\,\notag\\
 &\quad\times\Gamma_{\mathcal{A}}^{H}[(A+k_2 I; x), B+k_1I, B'+(k_1+k_2)I; C, C'+(k_1+k_2) I; z_1, z_2, z_3]\notag\\
 &\quad +\sum_{k_1+k_2\leq t+1}^{}{t\choose k_1-1, k_2}z_{2}^{k_1} z_{3}^{k_2}{(A)_{k_2}}(B)_{k_1}\,(C')^{-1}_{k_1+k_2}\notag\\&\quad\times\Gamma_{\mathcal{A}}^{H}[(A+k_2 I; x), B+k_1I, B'+(k_1+k_2)I; C, C'+(k_1+k_2) I; z_1, z_2, z_3]\notag\\
 &\quad +\sum_{k_1+k_2\leq t+1}^{}{t\choose k_1, k_2-1}z_{2}^{k_1} z_{3}^{k_2}{(A)_{k_2}}(B)_{k_1}\,(C')^{-1}_{k_1+k_2}\notag\\&\times\Gamma_{\mathcal{A}}^{H}[(A+k_2 I; x), B+k_1I, B'+(k_1+k_2)I; C, C'+(k_1+k_2) I; z_1, z_2, z_3].
 \label{3eqp5}
 \end{align}
 Using Pascal's identity in (\ref{3eqp5}), we have
 \begin{align}
& \Gamma_{\mathcal{A}}^{H}[(A; x), B, B'+(t+1)I; C, C'; z_1, z_2, z_3]\notag\\
&=\sum_{k_1+k_2\leq t+1}^{}{t+1\choose k_1, k_2}z_{2}^{k_1} z_{3}^{k_2}{(A)_{k_2}}(B)_{k_1}\,(C')^{-1}_{k_1+k_2}\,\notag\\
 &\quad\times\Gamma_{\mathcal{A}}^{H}[(A+k_2 I; x), B+k_1I, B'+(k_1+k_2)I; C, C'+(k_1+k_2) I; z_1, z_2, z_3].
 \end{align}
 This establishes (\ref{3eqh1}) for $s=t+1$. Hence through  induction the result given in (\ref{3eqh1}) stands true for all values of $s$. The second recursion formula (\ref{3eqh2}) can be proved in a similar manner.\end{proof}
\begin{theorem}
Let $C-sI$  and $C'-sI$ be invertible matrices for all integers $s\geq0$. Then the following recursion formulas hold  true for  $\Gamma_{\mathcal{A}}^{H}$:
\begin{align}
&\Gamma_{\mathcal{A}}^{H}[(A; x), B, B'; C-sI, C'; z_1, z_2, z_3]\notag\\
&=\Gamma_{\mathcal{A}}^{H}[(A; x), B, B'; C, C' ; z_1, z_2, z_3]\notag\\&\quad+ z_1 AB\Big[\sum_{k=1}^{s} \Gamma_{\mathcal{A}}^{H}[(A+I; x), B+I, B'; C+(2-k)I, C' ; z_1, z_2, z_3]\notag\\&\quad\times{(C-kI)^{-1}(C-(k-1)I)^{-1}}\Big];\label{zz1}
\end{align}
\begin{align}
&\Gamma_{\mathcal{A}}^{H}[(A; x), B, B'; C, C'-sI ; z_1, z_2, z_3]\notag\\
&=\Gamma_{\mathcal{A}}^{H}[(A; x), B, B'; C, C'; z_1, z_2, z_3]\notag\\&\quad+ z_2 B B'\Big[\sum_{k=1}^{s} \Gamma_{\mathcal{A}}^{H}[(A; x), B+I, B'+I;\,C, C'+(2-k)I ; z_1, z_2, z_3]\,\notag\\
&\quad\times{(C'-kI)^{-1}(C'-(k-1)I)^{-1}}\Big]\notag\\
&+ z_3 A B'\Big[\sum_{k=1}^{s} \Gamma_{\mathcal{A}}^{H}[(A+I; x), B, B'+I;\,C, C'+(2-k)I ; z_1, z_2, z_3]\notag\\
&\quad\times{(C'-kI)^{-1}(C'-(k-1)I)^{-1}}\Big],\label{ph2}
\end{align}
where $A$, $B$, $B'$, $C$ and $C'$ are positive stable and commutative matrices in $\mathbb{C}^{r\times r}$ such that $C+kI$ and $C'+kI$  are invertible  matrices for all integers $k\geq 0$.
\end{theorem}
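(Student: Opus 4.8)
The plan is to derive both formulas by first proving the single-step ($s=1$) contiguous matrix relations directly from the series (\ref{2eq2}) and then iterating them telescopically, in the same spirit as the proof of Theorem~\ref{h1}. Since $C$ and $C'$ occur in (\ref{2eq2}) only through the reciprocal factors $(C)_m^{-1}$ and $(C')_{n+p}^{-1}$, I would work from the series rather than from the integral representation (\ref{2eq6}). The two algebraic inputs are the commutative reciprocal-Pochhammer identities
\begin{align*}
(C-I)_m^{-1}=(C)_m^{-1}+m\,(C-I)^{-1}(C)_m^{-1},\qquad (C'-I)_{n+p}^{-1}=(C')_{n+p}^{-1}+(n+p)\,(C'-I)^{-1}(C')_{n+p}^{-1},
\end{align*}
each obtained from the ratio $(C)_m(C-I)_m^{-1}=(C+(m-1)I)(C-I)^{-1}$, together with the elementary shift identities $(B)_{k+1}=B\,(B+I)_k$, $(C)_{m+1}^{-1}=(C+I)_m^{-1}C^{-1}$ and the incomplete analogue $[A;x]_{k+1}=A\,[A+I;x]_k$, the last of which follows from (\ref{1eq8}) and the functional equation $\Gamma(A+I)=A\,\Gamma(A)$.

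For (\ref{zz1}) I would substitute the first identity into $\Gamma_{\mathcal{A}}^{H}[(A;x),B,B';C-I,C';\ldots]$, splitting it into the unshifted function plus a correction carrying an extra factor $m$. In that correction $m/m!=1/(m-1)!$ annihilates the $m=0$ term, so the reindexing $m\mapsto m+1$ is legitimate; applying $[A;x]_{(m+p)+1}=A[A+I;x]_{m+p}$, $(B)_{(m+n)+1}=B(B+I)_{m+n}$ and $(C)_{m+1}^{-1}=(C+I)_m^{-1}C^{-1}$, and pulling the commuting constants out of the triple sum, reconstitutes a copy of $\Gamma_{\mathcal{A}}^{H}$ with parameters $(A+I;x),B+I,B';C+I,C'$. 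This yields the $s=1$ case of (\ref{zz1}),
\begin{align*}
\Gamma_{\mathcal{A}}^{H}[(A;x),B,B';C-I,C';\ldots]=\Gamma_{\mathcal{A}}^{H}[(A;x),B,B';C,C';\ldots]\\+z_1AB\,(C-I)^{-1}C^{-1}\,\Gamma_{\mathcal{A}}^{H}[(A+I;x),B+I,B';C+I,C';\ldots].
\end{align*}

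For (\ref{ph2}) the same manipulation uses the second identity, whose extra factor $n+p$ I would split additively. The $n$-summand (reindex $n\mapsto n+1$, using $(B)_{(m+n)+1}=B(B+I)_{m+n}$, $(B')_{(n+p)+1}=B'(B'+I)_{n+p}$, $(C')_{(n+p)+1}^{-1}=(C'+I)_{n+p}^{-1}(C')^{-1}$) contributes $z_2BB'(C'-I)^{-1}(C')^{-1}\,\Gamma_{\mathcal{A}}^{H}[(A;x),B+I,B'+I;C,C'+I;\ldots]$, while the $p$-summand (reindex $p\mapsto p+1$, now additionally invoking $[A;x]_{(m+p)+1}=A[A+I;x]_{m+p}$) contributes $z_3AB'(C'-I)^{-1}(C')^{-1}\,\Gamma_{\mathcal{A}}^{H}[(A+I;x),B,B'+I;C,C'+I;\ldots]$. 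Added to the unshifted function these give the $s=1$ case of (\ref{ph2}).

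The general $s$ then follows by iteration. Writing $F(D)$ for $\Gamma_{\mathcal{A}}^{H}$ with first denominator parameter $D$ and all other arguments fixed, the $C$-step reads $F(D-I)=F(D)+z_1AB(D-I)^{-1}D^{-1}\widetilde F(D+I)$, where $\widetilde F$ has the raised numerators $A+I,B+I$; applying this successively at $D=C,C-I,\ldots,C-(s-1)I$ and summing collapses the $F$-terms telescopically and leaves exactly the sum $\sum_{k=1}^{s}$ with denominators $(C-kI)^{-1}(C-(k-1)I)^{-1}$ and argument $C+(2-k)I$, which is (\ref{zz1}); the invertibility of each $C-kI$ assumed in the hypotheses is precisely what legitimises each step. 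The $C'$-recursion (\ref{ph2}) iterates identically, every step contributing the two correction terms found above. I expect the main obstacle to be purely organisational: verifying the incomplete-Pochhammer shift $[A;x]_{k+1}=A[A+I;x]_k$ from (\ref{1eq8}), and then keeping the telescoping bookkeeping aligned so that the shifted argument $C+(2-k)I$ and the paired denominators $(C-kI)^{-1}(C-(k-1)I)^{-1}$ emerge in exactly the stated form, together with the clean additive separation of the $n$- and $p$-contributions in the $C'$ case.
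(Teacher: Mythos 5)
Your proposal is correct and follows essentially the same route as the paper: the paper's proof also rests on the transformation $(C-I)^{-1}_{m}=(C)^{-1}_{m}\left[I+m(C-I)^{-1}\right]$ (and its $C'$ analogue acting on $(C')^{-1}_{n+p}$), obtains exactly your one-step contiguous relation $\Gamma_{\mathcal{A}}^{H}[(A;x),B,B';C-I,C';\cdot]=\Gamma_{\mathcal{A}}^{H}[(A;x),B,B';C,C';\cdot]+z_1AB\,\Gamma_{\mathcal{A}}^{H}[(A+I;x),B+I,B';C+I,C';\cdot]\,C^{-1}(C-I)^{-1}$, and then iterates it $s$ times, treating the $C'$ recursion as analogous. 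The only differences are presentational: you carry out the reindexing on the series (including the shift $[A;x]_{k+1}=A[A+I;x]_{k}$) explicitly and write out the two-term $C'$ case in full, whereas the paper nominally cites the integral representation alongside the same Pochhammer identity and leaves those details implicit.
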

\begin{proof}
 Applying   integral formula  (\ref{2eq6}) of the incomplete  Srivastava's  triple hypergeometric  matrix function $\Gamma_{\mathcal{A}}^{H}$  and the transformation which follows
\begin{align*}
(C-I)^{-1}_{m}=(C)^{-1}_{m}\left[I+{m}{(C-I)^{-1}}\right],
\end{align*}
we can easily get the contiguous matrix relation
\begin{align}
&\Gamma_{\mathcal{A}}^{H}[(A; x), B, B'; C-I, C'; z_1, z_2, z_3]\notag\\
&=\Gamma_{\mathcal{A}}^{H}[(A; x), B, B'; C, C' ; z_1, z_2, z_3]\notag\\&\quad+ z_1 AB \,\Big[\Gamma_{\mathcal{A}}^{H}[(A+I; x), B+I, B'; C+I, C' ; z_1, z_2, z_3]\Big]{C^{-1}(C-I)^{-1}}.
\label{2eqp9}
\end{align}
Applying this contiguous matrix relation twice on the  incomplete  Srivastava's  triple hypergeometric  matrix function $\Gamma_{\mathcal{B}}^{H}$ with matrix $C-2I$, we arrive at,
\begin{align}
&\Gamma_{\mathcal{A}}^{H}[(A; x), B, B'; C-2I, C' ; z_1, z_2, z_3]\notag\\
&=\Gamma_{\mathcal{A}}^{H}[(A; x), B, B'; C-I, C' ; z_1, z_2, z_3]\notag\\&\quad+ z_1 AB \,\Big[\Gamma_{\mathcal{A}}^{H}[(A+I; x), B+I, B'; C, C' ; z_1, z_2, z_3]\Big]{(C-I)^{-1}(C-2I)^{-1}}\notag\\
&=\Gamma_{\mathcal{A}}^{H}[(A; x), B, B'; C; z_1, z_2, z_3]\notag\\&\quad+ z_1 AB \Big[{\Gamma_{\mathcal{A}}^{H}[(A+I;x), B+I, B'; C+I, C'; z_1, z_2, z_3]}{C^{-1}(C-I)^{-1}}\notag\\
&\quad +\,{\Gamma_{\mathcal{A}}^{H}[(A+I; x), B+I, B'; C, C' ; z_1, z_2, z_3]}{(C-I)^{-1}(C-2I)^{-1}}\Big].
\label{2eqp19}
\end{align}
Iterating this method $s$ times on the  incomplete Srivastava's  triple hypergeometric  matrix function  $\Gamma_{\mathcal{A}}^{H}((A; x), B, B'; C-sI, C'; x_1, x_2, x_3)$  we get the recursion formula (\ref{zz1}). Recursion formulas (\ref{ph2})  can be proved in an analogous manner.\end{proof}
\begin{theorem}\label{r1} Let $A$, $B$,  $B'$, $C$ and $C'$  be  positive stable and commutative  matrices in  $\mathbb{C}^{r\times r}$ such that $C+kI$ and $C'+kI$ are invertible  matrices for all integers $k\geq 0$. Then the  following recurrence relations hold true:
\begin{align}
&(C'-(B'+I))\Gamma_{\mathcal{A}}^{H}[(A; x), B, B'; C, C'; z_1, z_2, z_3]\notag\\
&= (C'-I) \Gamma_{\mathcal{A}}^{H}[(A; x), B, B'; C, C'-I; z_1, z_2, z_3]\notag\\
&\quad- B' \,\Gamma_{\mathcal{A}}^{H}[(A; x), B, B'+I; C, C'; z_1, z_2, z_3].\label{2eq15}\end{align}

\end{theorem}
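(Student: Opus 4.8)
The plan is to prove the recurrence by comparing the defining series (\ref{2eq2}) of the three $\Gamma_{\mathcal{A}}^{H}$ functions coefficient by coefficient. Each of the three functions shares the common block $[A;x]_{m+p}(B)_{m+n}(C)^{-1}_{m}\frac{z_1^{m}z_2^{n}z_3^{p}}{m!\,n!\,p!}$; the only factors that differ are those built from $B'$ and $C'$, and, crucially, both of these parameters enter the definition \emph{solely} through the single combined index $N:=n+p$. It therefore suffices to establish, for every integer $N\ge 0$, the commutative Pochhammer identity
\begin{align}
(C'-(B'+I))\,(B')_{N}(C')^{-1}_{N} &= (C'-I)(B')_{N}(C'-I)^{-1}_{N}\notag\\
&\quad - B'(B'+I)_{N}(C')^{-1}_{N},\notag
\end{align}
after which reinsertion of the common block and summation over $m,n,p\ge 0$ (with $N=n+p$) yields (\ref{2eq15}) immediately.

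To prove this single-variable identity I would use two elementary contiguous relations for the Pochhammer matrix symbols, both legitimate because $A,B,B',C,C'$ are assumed commutative. First, from $(C'-I)_{N}=(C'-I)(C'+(N-1)I)^{-1}(C')_{N}$ one obtains $(C'-I)^{-1}_{N}=(C')^{-1}_{N}(C'+(N-1)I)(C'-I)^{-1}$; substituting this into the first term on the right and cancelling the commuting factors $(C'-I)$ and $(C'-I)^{-1}$ collapses it to $(B')_{N}(C')^{-1}_{N}(C'+(N-1)I)$. Second, from $(B'+I)_{N}=B'^{-1}(B'+NI)(B')_{N}$ the second term on the right reduces to $(B'+NI)(B')_{N}(C')^{-1}_{N}$. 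Subtracting the two and factoring out the common $(B')_{N}(C')^{-1}_{N}$ leaves the bracket $(C'+(N-1)I)-(B'+NI)=C'-B'-I=C'-(B'+I)$, which is precisely the left-hand factor.

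The only point requiring care is the handling of the inverses and the ordering of matrix factors; however, since all five parameter matrices commute, every Pochhammer symbol and its inverse commutes with each of the others, so the manipulations above are justified verbatim and no ordering ambiguity arises. The main (and rather mild) obstacle is thus purely bookkeeping: one must verify that $B'$ and $C'$ appear in (\ref{2eq2}) only via the factors $(B')_{n+p}$ and $(C')^{-1}_{n+p}$, i.e.\ only through $N=n+p$, so that the identity in the single index $N$ accounts for all three series simultaneously. Once this observation is confirmed, the term-by-term identity together with summation over $m,n,p$ completes the proof.
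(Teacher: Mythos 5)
Your proposal is correct, but it proves the recurrence by a genuinely different route than the paper. The paper's proof never touches the series: it starts from the double integral representation (\ref{2eq6}) and inserts, under the integral sign, the known contiguous relation $(C'-(B'+I))\,{_1F_1}(B';C';z)=(C'-I)\,{_1F_1}(B';C'-I;z)-B'\,{_1F_1}(B'+I;C';z)$ (cited from Kishka et al.), after which the three resulting integrals are recognized as the three $\Gamma_{\mathcal{A}}^{H}$ functions in (\ref{2eq15}). You instead work termwise on the defining series (\ref{2eq2}), observing that $B'$ and $C'$ enter only through the combined index $N=n+p$, and reduce everything to the single Pochhammer identity $(C'-(B'+I))(B')_{N}(C')^{-1}_{N}=(C'-I)(B')_{N}(C'-I)^{-1}_{N}-B'(B'+I)_{N}(C')^{-1}_{N}$, which you verify correctly using commutativity. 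What your approach buys is economy of hypotheses: the paper's argument silently inherits the assumptions of the integral representation (positive stability of $A$ and $B$, invertibility of $A+kI$ and $B+kI$, plus convergence of the improper integrals), none of which your series argument needs. What the paper's approach buys is that a single imported ${_1F_1}$ relation does all the work, with no index bookkeeping. Two small repairs to your write-up: (i) you invoke $B'^{-1}$ via $(B'+I)_{N}=B'^{-1}(B'+NI)(B')_{N}$, but invertibility of $B'$ is not among the hypotheses; this is harmless because the term you must simplify is the product $B'(B'+I)_{N}$, and the identity $B'(B'+I)_{N}=(B')_{N}(B'+NI)=(B')_{N+1}$ holds without inverting $B'$. (ii) Your first relation uses $(C'-I)^{-1}$; this is genuinely needed, but it is implicit in the statement itself, since the middle term of (\ref{2eq15}) involves the parameter $C'-I$ and so requires $(C'-I)+kI$ invertible for all $k\geq 0$ --- a gap in the theorem's stated hypotheses that affects the paper's proof equally.
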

\begin{proof} Using the integral formula (\ref{2eq6}) and the following contiguous matrix  relation \cite{ZM}
\begin{align}&(C'-(B'+I)) _{1}F_{1}(B'; C'; z)\notag\\&=(C'-I) _{1}F_{1}(B'; C'-I; z)- B'\,{ _{1}F_{1}}(B'+I; C'; z), 
\end{align}
The following recurrence relation emerges:
\begin{align}
&(C'-(B'+I))\Gamma_{\mathcal{A}}^{H}[(A; x), B, B'; C,C'; z_1, z_2, z_3]\notag\\&= \Gamma^{-1}(A)\Gamma^{-1}(B)\Big[\int_{x}^{\infty} e^{-t-s}t^{A-I}s^{B-I} \Big((C'-I) {_{1}F_{1}}[B'; C'-I; z_2s+z_3t]\notag\\&\quad -B' {_{1}F_{1}}(B'+I; C'; z_2s+z_3t)\Big)  {_{0}F_{1}}(-; C; z_1st) dtds\Big],\notag\\
&= (C'-I) \Gamma_{\mathcal{A}}^{H}[(A; x), B, B'; C, C'-I; z_1, z_2, z_3]- B' \,\Gamma_{\mathcal{A}}^{H}[(A; x), B, B'+I; C, C'; z_1, z_2, z_3].
\end{align}
This finishes the proof of (\ref{2eq15}) as well as completes the proof of Theorem~\ref{r1}.
\end{proof}
\begin{theorem} Let  $A$, $B$, $B'$, $C$  and $C'$ be  positive stable  and commutative matrices  in  $\mathbb{C}^{r\times r}$ such that  $C+kI$  and  $C'+kI$ are invertible for all integers $k\geq0$. Then the following  recurrence relation  for $\Gamma_{\mathcal{A}}^{H}$ in (\ref{2eq2}) holds true: 
\begin{align}&\Gamma_{\mathcal{A}}^{H}[(A; x), B, B'; C, C' ; z_1, z_2, z_3]=\Gamma_{\mathcal{A}}^{H}[(A; x), B, B'; C-I, C' ; z_1, z_2, z_3]\notag\\
&-ABC^{-1}(C-I)^{-1} \Gamma_{\mathcal{A}}^{H}[(A+I; x), B+I, B'; C+I,  C'; z_1, z_2, z_3].
\end{align}
\end{theorem}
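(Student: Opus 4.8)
The plan is to recognise that the asserted recurrence is nothing but the rearranged form of the single-step contiguous relation in the parameter $C$, namely (\ref{2eqp9}), which already underlies the proof of the recursion formula (\ref{zz1}). Hence it suffices to re-derive that contiguous relation cleanly and then solve it for the term carrying the parameter $C$. I would work from the integral representation (\ref{2eq6}), lowering $C$ to $C-I$ inside the ${}_0F_1$ factor.

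First I would record the contiguity of the matrix Bessel-type function ${}_0F_1$ in its denominator parameter. Expanding ${}_0F_1(-;C-I;w)=\sum_{m\geq0}(C-I)_m^{-1}w^m/m!$ and inserting the Pochhammer identity $(C-I)_m^{-1}=(C)_m^{-1}\bigl[I+m(C-I)^{-1}\bigr]$ already used to obtain (\ref{2eqp9}), the piece carrying the factor $m$ kills its $m=0$ contribution; after the shift $m\mapsto m+1$ and the factorisation $(C)_{m+1}^{-1}=(C+I)_m^{-1}C^{-1}$ one is led to
\begin{align}
{}_0F_1(-;C-I;w)={}_0F_1(-;C;w)+w\,C^{-1}(C-I)^{-1}\,{}_0F_1(-;C+I;w).\notag
\end{align}

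Next I would substitute $w=z_1 st$ into (\ref{2eq6}) written for $\Gamma_{\mathcal{A}}^{H}$ with parameter $C-I$. The first summand reproduces $\Gamma_{\mathcal{A}}^{H}[(A;x),B,B';C,C';z_1,z_2,z_3]$ verbatim. In the second summand the extra factor $z_1 st$ promotes the weights $t^{A-I}\mapsto t^{(A+I)-I}$ and $s^{B-I}\mapsto s^{(B+I)-I}$; comparing the resulting double integral with (\ref{2eq6}) evaluated at $(A+I,B+I;C+I)$ and using $\Gamma(A+I)=A\,\Gamma(A)$ and $\Gamma(B+I)=B\,\Gamma(B)$ to reconcile the normalising gamma matrices, the second summand collapses to $z_1\,AB\,C^{-1}(C-I)^{-1}\,\Gamma_{\mathcal{A}}^{H}[(A+I;x),B+I,B';C+I,C';z_1,z_2,z_3]$. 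This is exactly the contiguous relation (\ref{2eqp9}); transposing the two terms isolates $\Gamma_{\mathcal{A}}^{H}[(A;x),B,B';C,C';z_1,z_2,z_3]$ and yields the stated recurrence.

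The only genuine care point is the commutativity bookkeeping: moving the matrix factors $A$, $B$, $C^{-1}$ and $(C-I)^{-1}$ freely past the summation and the double integral, and then collecting them into the single prefactor $ABC^{-1}(C-I)^{-1}$, is legitimate precisely because $A$, $B$, $C$ are assumed to commute pairwise. The interchange of summation and integration needed to pass between the series and integral forms is justified by absolute convergence under the positive-stability hypotheses, exactly as in the proof of (\ref{2eq6}). I expect no deeper obstacle: once the contiguous relation (\ref{2eqp9}) is in hand the result is a one-line algebraic rearrangement.
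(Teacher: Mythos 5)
Your proof is correct and takes essentially the same route as the paper: the paper's own proof simply applies the contiguous relation ${}_{0}F_{1}(-;C-I;w)-{}_{0}F_{1}(-;C;w)-w\,C^{-1}(C-I)^{-1}\,{}_{0}F_{1}(-;C+I;w)=0$ inside the integral representation (\ref{2eq6}), which is precisely your argument with the supporting details (the Pochhammer identity, the weight promotion $t^{A-I}\mapsto t^{(A+I)-I}$, $s^{B-I}\mapsto s^{(B+I)-I}$, and the gamma normalisation $\Gamma(A+I)=A\Gamma(A)$) written out, and with the observation that the result is the rearranged contiguous relation (\ref{2eqp9}). One remark: both your derivation and the paper's actually yield the coefficient $z_1\,AB\,C^{-1}(C-I)^{-1}$ on the last term, so the absence of $z_1$ in the theorem's displayed formula is evidently a typographical slip in the paper rather than a defect of your argument.
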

\begin{proof}Applying  the contiguous relation for the function $_{0}F_{1}$
\begin{align}{_{0}F_{1}}(-; C-I; z_1)-  {_{0}F_{1}}(-; C; z_1)- z_1 C^{-1}(C-I)^{-1} \,{_{0}F_{1}}(-; C+I; z_1)=0
\end{align}
in the integral representation (\ref{2eq6}),  we are led to the desired result.
\end{proof}
\begin{theorem}Let $A$, $B$, $B'$, $C$  and $C'$ be  positive stable  and commutative matrices  in  $\mathbb{C}^{r\times r}$ such that  $C+kI$  and  $C'+kI$  are invertible for all integers $k\geq0$. Then the following  derivative formulas  for $\Gamma_{\mathcal{A}}^{H}$ in (\ref{2eq2}) hold true: 
\begin{align}
&\frac{\partial^{m}}{\partial z_{1}^{m}}\Gamma_{\mathcal{A}}^{H}[(A; x), B, B'; C, C' ; z_1, z_2, z_3]\notag\\&= (A)_{m}(B)_{m} (C)^{-1}_{m}\Big[\Gamma_{\mathcal{A}}^{H}[(A+mI; x), B+mI, B'; C+mI, C' ; z_1, z_2, z_3]\Big];\label{d1}\\
&\frac{\partial^{m+n}}{\partial z_{1}^{m}\partial z_{2}^{n}}\Gamma_{\mathcal{A}}^{H}[(A; x), B, B'; C, C'; z_1, z_2, z_3]\notag\\&= (A)_{m}(B)_{m+n}(B')_{n}(C)^{-1}_{m} (C')^{-1}_{n}\notag\\& \times \Big[\Gamma_{\mathcal{A}}^{H}[(A+mI; x), B+(m+n)I, B'+nI; C+mI,C'+nI ; z_1, z_2, z_3]\Big];\label{d2}\\
&\frac{\partial^{m+n+p}}{\partial z_{1}^{m}\partial z_{2}^{n}\partial z_{3}^{p}}\Gamma_{\mathcal{A}}^{H}[(A; x), B, B'; C,C' ; z_1, z_2, z_3]\notag\\&= (A)_{m+p}(B)_{m+n}(B')_{n+p}(C)^{-1}_{m} (C')^{-1}_{n+p}\notag\\& \times \Big[\Gamma_{\mathcal{A}}^{H}[(A+(m+p)I; x), B+(m+n)I, B'+(n+p)I; C+mI,C'+(n+p)I; z_1, z_2, z_3]\Big].\label{d3}
\end{align}
\end{theorem}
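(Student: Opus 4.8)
The plan is to prove all three derivative formulas by differentiating the defining series (\ref{2eq2}) term by term and then reindexing the summation; the three claims differ only in how many of the summation indices are shifted. Since the triple series converges absolutely and uniformly on compact subsets of its domain, differentiation under the summation sign is legitimate, and the $m$-fold $z_1$-derivative of $z_1^{j}/j!$ is $z_1^{j-m}/(j-m)!$, so after setting $j=k+m$ the only surviving terms are those carrying shifted indices.

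The engine of the proof is a set of splitting identities for the Pochhammer matrix symbols, all of which rest on commutativity of the parameter matrices. From the definition (\ref{1eq8}) together with $(A)_m=\Gamma(A+mI)\Gamma^{-1}(A)$, and inserting the factor $\Gamma^{-1}(A+mI)\Gamma(A+mI)=I$, one obtains
\[
[A;x]_{k+m+p}=\Gamma(A+(k+m+p)I,x)\Gamma^{-1}(A)=(A)_{m}\,[A+mI;x]_{k+p}.
\]
The analogous identities for the complete symbols,
\[
(B)_{j+m}=(B)_{m}(B+mI)_{j},\qquad (C)^{-1}_{j+m}=(C+mI)^{-1}_{j}(C)^{-1}_{m},
\]
follow in the same way, and every factored-out constant $(A)_m$, $(B)_m$, $(C)^{-1}_m$ is a function of the commuting matrices, hence may be pulled through the remaining sum.

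For (\ref{d1}) I would differentiate $m$ times in $z_1$, reindex $m\mapsto k+m$, and apply the three identities above with shift $m$ to $[A;x]$, to $(B)$ and to $(C)^{-1}$ respectively; pulling the constant matrix factor $(A)_m(B)_m(C)^{-1}_m$ out front (justified by commutativity) leaves precisely the series defining $\Gamma_{\mathcal{A}}^{H}[(A+mI;x),B+mI,B';C+mI,C';z_1,z_2,z_3]$. Formulas (\ref{d2}) and (\ref{d3}) are then obtained by the identical mechanism applied to the mixed partials: differentiating $m$ times in $z_1$, $n$ times in $z_2$ and $p$ times in $z_3$ shifts the incomplete symbol index by $m+p$, the index of $(B)$ by $m+n$, of $(B')$ by $n+p$, of $(C)^{-1}$ by $m$, and of $(C')^{-1}$ by $n+p$, so that the splitting identities peel off exactly the coefficient $(A)_{m+p}(B)_{m+n}(B')_{n+p}(C)^{-1}_{m}(C')^{-1}_{n+p}$ and leave the correspondingly shifted matrix function.

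I expect the main obstacle to be bookkeeping rather than analysis: one must track the five simultaneous index shifts consistently, in particular noting that $[A;x]$ and $(B')$ are governed by the \emph{sum} of two running indices, so that their shifts are $m+p$ and $n+p$ and not merely $m$ or $p$. One must also invoke the commutativity hypothesis at each factoring step to guarantee that the extracted Pochhammer factors can be placed in front of the residual series in the order displayed in (\ref{d1})--(\ref{d3}). Once these shifts are recorded correctly, each formula reduces to recognizing the remaining sum as a shifted instance of (\ref{2eq2}).
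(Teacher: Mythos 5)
Your proof is correct, but it takes a genuinely different route from the paper's. The paper proves (\ref{d1}) by differentiating the double integral representation (\ref{2eq6}) with respect to $z_1$: the derivative of ${_{0}F_{1}}(-;C;z_1st)$ produces $stC^{-1}\,{_{0}F_{1}}(-;C+I;z_1st)$, the factors $t$ and $s$ are absorbed into the exponents to give $t^{(A+I)-I}s^{(B+I)-I}$, and $\Gamma^{-1}(A)\Gamma^{-1}(B)C^{-1}$ is rewritten as $ABC^{-1}\Gamma^{-1}(A+I)\Gamma^{-1}(B+I)$, so the resulting integral is recognized as the representation of the shifted function; this gives the case $m=1$, the general case follows by induction on $m$, and (\ref{d2})--(\ref{d3}) are merely asserted to follow analogously. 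You instead differentiate the defining series (\ref{2eq2}) term by term and peel off the coefficients with splitting identities such as $[A;x]_{k+m+p}=(A)_m[A+mI;x]_{k+p}$ and $(B)_{j+m}=(B)_m(B+mI)_j$, all of which are valid because every factor involved is a function of the pairwise commuting matrices $A,B,B',C,C'$. Your route buys several things: it works directly from the definition and so does not depend on the integral representation (whose derivation needs positive stability of $A$ and $B$ for convergence); it treats all three formulas uniformly in a single computation with no induction; and it makes the crucial bookkeeping explicit — that the incomplete symbol is shifted by $m+p$ and $(B')$ by $n+p$ — which is exactly the content the paper leaves unproved in (\ref{d2}) and (\ref{d3}). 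What the paper's route buys is brevity and consistency with the rest of its development, in which essentially every property of $\Gamma_{\mathcal{A}}^{H}$ is read off from (\ref{2eq6}); note also that the paper differentiates only ``formally,'' so your appeal to uniform convergence on compacta to justify term-by-term differentiation is, if anything, the more carefully grounded step.
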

\begin{proof}Upon differentiating formally both sides of (\ref{2eq6}) with respect to $z_1$, the resultant equation comes to:
\begin{align}&\frac{\partial}{\partial z_{1}}\Gamma_{\mathcal{A}}^{H}[(A; x), B, B'; C, C' ; z_1, z_2, z_3]
=AB C^{-1}\Gamma^{-1}(A+I) \Gamma^{-1}(B+I)\notag\\&\times \Big[\int_{x}^{\infty} \int_{0}^{\infty}e^{-t-s}\, t^{(A+I)-I} s^{(B+I)-I}  {_{0}F_{1}}(-; C+I; z_1 st) {_1F_{1}}(B'; C' ; z_2 s+ z_3 t) dt ds\Big].\label{d4}
\end{align}
Now,  from  (\ref{2eq6}) and (\ref{d4}), we obtain
\begin{align}&\frac{\partial}{\partial z_{1}}\Gamma_{\mathcal{A}}^{H}[(A; x), B, B'; C, C' ; z_1, z_2, z_3]\notag\\&= AB C^{-1}\Big[\Gamma_{\mathcal{A}}^{H}[(A+I; x), B+I, B'; C+I, C' ; z_1, z_2, z_3]\Big],
\end{align}
which is (\ref{d1}) for $m=1$. The general result follows by the principle of mathematical induction on $m$.
\end{proof}

This completes the proof of (\ref{d1}). Successively  (\ref{d2}) and (\ref{d3}) can be proved in an analogous manner.

\section{The  incomplete Srivastava's  triple  hypergeometric matrix  functions $\gamma_{\mathcal{B}}^{H}$ and $\Gamma_{\mathcal{B}}^{H}$}
In this section, we introduce the incomplete Srivastava's  triple  hypergeometric matrix  functions $\gamma_{\mathcal{B}}^{H}$ and $\Gamma_{\mathcal{B}}^{H}$ as follows: 
\begin{align}
&\gamma_{\mathcal{B}}^{H}[(A; x), B, B'; C,C', C''; z_1, z_2, z_3]\notag\\&=\sum_{m, n, p\geq 0}(A;x)_{m+p} (B)_{m+n} (B')_{n+p} (C)^{-1}_{m}(C')^{-1}_{n}(C'')^{-1}_{p}\frac{z_{1}^{m}z_2^{n}z_3^{p}}{m! n! p!},\label{s2eq1}\\
&\Gamma_{\mathcal{B}}^{H}[(A; x), B, B'; C,C', C''; z_1, z_2, z_3]\notag\\&=\sum_{m, n, p\geq 0}[A;x]_{m+p} (B)_{m+n} (B')_{n+p} (C)^{-1}_{m}(C')^{-1}_{n}(C'')^{-1}_{p}\frac{z_{1}^{m}z_2^{n}z_3^{p}}{m! n! p!}.\label{s2eq2}
\end{align}
where $A$, $B$, $B'$, $C$, $C'$  and $C''$ are  positive stable and commutative matrices in  $\mathbb{C}^{r\times r}$ such that $C+kI$, $C'+kI$ and  $C''+kI$  are invertible for all integers $k\geq0$.\\
From (\ref{1eq9}), we have the following decomposition formula
\begin{align}
&\gamma_{\mathcal{B}}^{H}[(A; x), B, B'; C,C', C''; z_1, z_2, z_3]+ \Gamma_{\mathcal{B}}^{H}[(A; x), B, B'; C,C', C''; z_1, z_2, z_3]\notag\\&= H_{\mathcal{B}}[A, B, B'; C,C', C''; z_1, z_2, z_3].\label{s2eq3}
\end{align}
where $H_{\mathcal{B}}[A, B, B'; C,C', C''; z_1, z_2, z_3]$ is the Srivastava's  triple  hypergeometric matrix  functions \cite{RD1}.

\begin{remark}
It is quite evident to note that the special cases of (\ref{s2eq1}) and (\ref{s2eq2}) when $z_2 = 0$ reduce to the
known incomplete families of the second Appell hypergeometric  matrix functions \cite{A3}. Also, the special
cases of (\ref{s2eq1}) and (\ref{s2eq2}) when $z_2 = 0$ and  $z_3 = 0$ or $z_1 = 0$ are seen to yield the known incomplete families of Gauss hypergeometric  matrix functions \cite{Ab}.\end{remark}

To discuss the properties and characteristics of $\gamma_{\mathcal{B}}^{H}[(A; x), B, B'; C,C', C''; z_1, z_2, z_3]$, it is sufficient to determine the properties of $\Gamma_{\mathcal{B}}^{H}[(A; x), B, B'; C,C', C''; z_1, z_2, z_3]$ according to the decomposition formula (\ref{s2eq3}). We omit the proofs of the given below theorems.
\begin{theorem}Let $A$, $B$, $B'$, $C$, $C'$  and $C''$ be  positive stable and commutative matrices in  $\mathbb{C}^{r\times r}$ such that $C+kI$, $C'+kI$ and $C''+kI$ is invertible for all integer $k\geq0.$ Then the function defined by 
${\mathcal{T}_2}={\mathcal{T}_2}(z_1, z_2, z_3) =\gamma_{\mathcal{B}}^{H}[(A; x), B, B'; C,C', C''; z_1, z_2, z_3]+\Gamma_{\mathcal{B}}^{H}[(A; x), B, B'; C,C', C''; z_1, z_2, z_3]$   satisfies the following system of partial differential equations:
\begin{align}
&\Big[z_1\frac{\partial}{\partial z_1}{(z_1\frac{\partial}{\partial z_1}+C-I)- z_1(z_1\frac{\partial}{\partial z_1}+z_3\frac{\partial}{\partial z_3}+A)(z_1\frac{\partial}{\partial z_1}+z_2\frac{\partial}{\partial z_2}+B)}\Big] \mathcal{T}=O,\label{2eq4}\\
&\Big[z_2\frac{\partial}{\partial z_2}{(z_2\frac{\partial}{\partial z_2}+C'-I)- z_2(z_2\frac{\partial}{\partial z_2}+z_1\frac{\partial}{\partial z_1}+B)(z_2\frac{\partial}{\partial z_2}+z_3\frac{\partial}{\partial z_3}+B')}\Big] \mathcal{T}=O,\label{m1}\\
&\Big[z_3\frac{\partial}{\partial z_3}{(z_3\frac{\partial}{\partial z_3}+C''-I)- z_3(z_1\frac{\partial}{\partial z_1}+z_3\frac{\partial}{\partial z_3}+A)(z_2\frac{\partial}{\partial z_2}+z_3\frac{\partial}{\partial z_3}+B')}\Big] \mathcal{T}=O.\label{s2eq5}
\end{align}
\end{theorem}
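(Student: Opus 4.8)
The plan is to pass from the pair $(\gamma_{\mathcal{B}}^{H},\Gamma_{\mathcal{B}}^{H})$ to the single \emph{complete} function $H_{\mathcal{B}}$ and then check the three equations directly on its multiple power series. First I would apply the decomposition relation (\ref{1eq9}), i.e. $(A;x)_{m+p}+[A;x]_{m+p}=(A)_{m+p}$, to the sum of the two series (\ref{s2eq1}) and (\ref{s2eq2}); since the two summands share every factor except the leading incomplete Pochhammer symbol, their sum reduces, coefficient by coefficient, to
\begin{align*}
\mathcal{T}_2=\sum_{m,n,p\geq0}(A)_{m+p}(B)_{m+n}(B')_{n+p}(C)^{-1}_m(C')^{-1}_n(C'')^{-1}_p\frac{z_1^m z_2^n z_3^p}{m!\,n!\,p!}=H_{\mathcal{B}}[A,B,B';C,C',C'';z_1,z_2,z_3].
\end{align*}
This is exactly (\ref{s2eq3}), and it is the reason the system is asserted for $\mathcal{T}_2$ rather than for either incomplete piece alone: the complete Pochhammer symbol obeys the clean shift $(A)_{k+1}=(A)_k(A+kI)$, whereas $(A;x)_k$ and $[A;x]_k$ acquire an extra boundary term involving $e^{-x}x^{A+kI}\Gamma^{-1}(A)$ under such a shift, so $\gamma_{\mathcal{B}}^{H}$ and $\Gamma_{\mathcal{B}}^{H}$ separately would fail the system.

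Writing $\theta_i$ for the Euler operator $z_i\frac{\partial}{\partial z_i}$, which multiplies the monomial $z_1^m z_2^n z_3^p$ by $m$, $n$, $p$ respectively, and noting that multiplication by $z_i$ raises the corresponding index by one, I would apply each operator termwise and compare the coefficient of a fixed monomial. Denote by $c_{m,n,p}$ the matrix coefficient of $z_1^m z_2^n z_3^p$ in $H_{\mathcal{B}}$. For the first equation, the part $\theta_1(\theta_1+C-I)$ contributes $m(C+(m-1)I)\,c_{m,n,p}$, while $z_1(\theta_1+\theta_3+A)(\theta_1+\theta_2+B)$, after reindexing $m\mapsto m-1$, contributes $(A+(m+p-1)I)(B+(m+n-1)I)\,c_{m-1,n,p}$. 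The two agree because the Pochhammer recurrences yield
\begin{align*}
c_{m,n,p}=c_{m-1,n,p}\,(A+(m+p-1)I)(B+(m+n-1)I)(C+(m-1)I)^{-1}\tfrac{1}{m},
\end{align*}
whence $m(C+(m-1)I)c_{m,n,p}=(A+(m+p-1)I)(B+(m+n-1)I)c_{m-1,n,p}$, the $m=0$ term vanishing on both sides.

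The second and third equations are handled identically, and here the bookkeeping is genuinely different from the $H_{\mathcal{A}}$ case: because $\mathcal{T}_2$ carries the \emph{separated} denominators $(C')^{-1}_n(C'')^{-1}_p$ in place of the coupled $(C')^{-1}_{n+p}$, the correct operators are $\theta_2(\theta_2+C'-I)$ and $\theta_3(\theta_3+C''-I)$ (with $C'$ and $C''$, and without the cross derivatives). Matching coefficients reduces, respectively, to $c_{m,n,p}=c_{m,n-1,p}(B+(m+n-1)I)(B'+(n+p-1)I)(C'+(n-1)I)^{-1}\frac{1}{n}$ and $c_{m,n,p}=c_{m,n,p-1}(A+(m+p-1)I)(B'+(n+p-1)I)(C''+(p-1)I)^{-1}\frac{1}{p}$, each a direct consequence of the shift relations for the Pochhammer symbols.

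The step I expect to require the most care is the rearrangement of matrix factors: the differential operator delivers the shifting matrices on the \emph{left} of the coefficient, whereas the Pochhammer ratios naturally place them on the \emph{right}, so equality of the two forms rests entirely on the standing hypothesis that $A$, $B$, $B'$, $C$, $C'$, $C''$ commute (and hence commute with every $c_{m,n,p}$). I would also record the routine analytic justification that the triple series may be differentiated termwise on its domain of convergence, after which the three coefficient identities above establish (\ref{2eq4}), (\ref{m1}) and (\ref{s2eq5}). Alternatively, since $\mathcal{T}_2=H_{\mathcal{B}}$, one may simply invoke the fact that $H_{\mathcal{B}}$ satisfies this system \cite{RD1}, exactly as in the proof of the corresponding theorem for $\mathcal{T}_1$.
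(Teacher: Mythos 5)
Your proposal is correct, and it is worth noting how it relates to what the paper actually does: for this theorem the paper omits the proof altogether, relying on the pattern established for $\mathcal{T}_1$ in Section~3, namely apply the decomposition formula (\ref{s2eq3}) to identify $\mathcal{T}_2$ with the complete function $H_{\mathcal{B}}$ and then simply cite \cite{RD1} for the fact that $H_{\mathcal{B}}$ satisfies the stated system. Your argument begins the same way (the reduction via (\ref{1eq9}) to $H_{\mathcal{B}}$ is exactly (\ref{s2eq3})), but instead of outsourcing the differential equations to \cite{RD1} you verify them termwise: with $c_{m,n,p}$ the coefficient of $z_1^m z_2^n z_3^p$, the three equations reduce to the shift identities
\begin{align*}
m\,(C+(m-1)I)\,c_{m,n,p}&=(A+(m+p-1)I)(B+(m+n-1)I)\,c_{m-1,n,p},\\
n\,(C'+(n-1)I)\,c_{m,n,p}&=(B+(m+n-1)I)(B'+(n+p-1)I)\,c_{m,n-1,p},\\
p\,(C''+(p-1)I)\,c_{m,n,p}&=(A+(m+p-1)I)(B'+(n+p-1)I)\,c_{m,n,p-1},
\end{align*}
each an immediate consequence of $(A)_{k+1}=(A)_k(A+kI)$ and the commutativity hypothesis; I checked these and they are right, including your correct placement of the separated denominators $(C')^{-1}_n$ and $(C'')^{-1}_p$, which is precisely what changes the operators from the $H_{\mathcal{A}}$ case. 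What your route buys is self-containedness (the key fact about $H_{\mathcal{B}}$ is proved rather than quoted) plus a genuine explanation, via the boundary term in the incomplete Pochhammer shift, of why the system holds for the sum $\mathcal{T}_2$ but not for $\gamma_{\mathcal{B}}^{H}$ or $\Gamma_{\mathcal{B}}^{H}$ separately; what the paper's route buys is brevity. Your closing sentence shows you recognize the citation shortcut as well, so the two approaches agree where they overlap.
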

\begin{theorem} Let $A$, $B$, $B'$, $C$, $C'$ and $C''$ be  positive stable and commutative  matrices in  $\mathbb{C}^{r\times r}$ such that $A+kI$, $B+kI$, $C+kI$, $C'+kI$ and  $C''+kI$  are  invertible for all integers $k\geq0$. Then the following integral representation for $\Gamma_{\mathcal{B}}^{H}$ in (\ref{s2eq2}) holds true:
\begin{align}
&\Gamma_{\mathcal{B}}^{H}[(A; x), B, B'; C,C', C''; z_1, z_2, z_3]= \Gamma^{-1}(A) \Gamma^{-1}(B)\notag\\&\times \Big[\int_{x}^{\infty} \int_{0}^{\infty}e^{-t-s}\, t^{A-I} s^{B-I}  {_{0}F_{1}}(-; C; z_1 st) \Psi_{2}(B'; C', C''; z_2 s, z_3 t) dt ds\Big],\label{s2eq6}
\end{align}where $\Psi_{2}$ is the Humbert's matrix function  defined by \cite{MA, SZ}
\begin{align}\Psi_{2}(A; C, C'; z_1, z_2)=\sum_{m, n \geq 0}^{}(A)_{m+n} (C)^{-1}_{m} (C')^{-1}_{n}\frac{z_{1}^{m} z_{2}^{n}}{m! n!}.
\end{align} 
\end{theorem}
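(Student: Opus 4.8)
The plan is to mimic, almost verbatim, the proof of the integral representation \eqref{2eq6} for $\Gamma_{\mathcal{A}}^{H}$, since the two functions differ only in how the parameters attached to the $z_2$- and $z_3$-variables are distributed. First I would start from the defining series \eqref{s2eq2} and replace the incomplete Pochhammer symbol $[A;x]_{m+p}$ by its integral form coming from \eqref{1eq5} and \eqref{1eq8}, namely $[A;x]_{m+p}=\Gamma^{-1}(A)\int_{x}^{\infty}e^{-t}t^{A+(m+p-1)I}\,dt$, and simultaneously write the ordinary Pochhammer symbol $(B)_{m+n}$ as $\Gamma^{-1}(B)\int_{0}^{\infty}e^{-s}s^{B+(m+n-1)I}\,ds$ by means of \eqref{g1}. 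Because $A$, $B$, $B'$, $C$, $C'$, $C''$ are assumed commutative and positive stable, the factors $\Gamma^{-1}(A)$ and $\Gamma^{-1}(B)$ may be pulled outside both the triple sum and the double integral, exactly as in the passage leading to \eqref{l1}.

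Next I would interchange summation and integration and collect powers. The factors carrying the index $m$, namely $z_1^{m}$, $t^{m}$, $s^{m}$ together with $(C)^{-1}_{m}/m!$, assemble into $\sum_{m\ge0}(C)^{-1}_{m}(z_1st)^{m}/m!={_0F_1}(-;C;z_1st)$, precisely as in \eqref{l1}. The remaining double sum over $n$ and $p$ reads $\sum_{n,p\ge0}(B')_{n+p}(C')^{-1}_{n}(C'')^{-1}_{p}(z_2s)^{n}(z_3t)^{p}/(n!\,p!)$. Here lies the one genuine point of departure from the $\Gamma_{\mathcal{A}}^{H}$ argument: because the denominator parameters $C'$ and $C''$ are now \emph{separated} between the two indices rather than combined as the single symbol $(C')^{-1}_{n+p}$, the summation formula \eqref{l2} is \emph{not} invoked to fold this sum into a one-variable ${_1F_1}$; instead the double sum is identified directly, from the stated definition of the Humbert matrix function, as $\Psi_{2}(B';C',C'';z_2s,z_3t)$. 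Substituting these two identifications into the integrand yields \eqref{s2eq6}.

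The steps that demand actual care, rather than routine bookkeeping, are the justification of the term-by-term interchange of the triple sum with the double integral and the convergence of the resulting integrals. Positive stability of $A$ and $B$ guarantees absolute convergence of the $t$- and $s$-integrals and controls the tails, so a dominated-convergence argument in the matrix-norm sense used throughout (via the standard estimates for $\|t^{A-I}\|$ and $\|s^{B-I}\|$) legitimizes the interchange; this is the same estimate already implicit in the proof of \eqref{2eq6}. The only other thing to keep honest is the ordering of the matrix factors when the Gamma-inverses are extracted and when $(C')^{-1}_{n}$ and $(C'')^{-1}_{p}$ are matched against the Humbert series, but commutativity of all the parameter matrices makes this automatic. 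I therefore anticipate no serious obstacle: the whole statement is simply the $\Gamma_{\mathcal{B}}^{H}$ analogue of \eqref{2eq6}, with $\Psi_{2}$ assuming the role that ${_1F_1}$ played there.
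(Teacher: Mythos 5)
Your proposal is correct and is essentially the argument the paper intends: the paper explicitly omits the proofs in Section~4, leaving them as analogues of the Section~3 arguments, and your reconstruction mirrors the proof of \eqref{2eq6} step for step — integral representations for $[A;x]_{m+p}$ and $(B)_{m+n}$, interchange of sum and integral, and resummation of the $m$-series into ${_0F_1}$. You also correctly identify the one real difference: with $(C')^{-1}_{n}(C'')^{-1}_{p}$ separated, the folding formula \eqref{l2} is not applicable, and the residual double sum is instead recognized directly as the Humbert function $\Psi_{2}(B';C',C'';z_2 s,z_3 t)$, which is exactly what yields \eqref{s2eq6}.
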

\begin{corollary}The following double integral representations:
\begin{align}&\Gamma_{\mathcal{B}}^{H}[(A; x), B, B'; C+I,C', C''; -z_1, z_2, z_3]\notag\\
&=x_{1}^{\frac{-C}{2}}\, \Gamma^{-1}(A) \Gamma^{-1}(B)\Gamma(C+I)\notag\\
&\times  \int_{x}^{\infty}\int_{0}^{\infty} e^{-t-s} \,t^{A-\frac{C}{2}-I} s^{B-\frac{C}{2}-I} J_{C}(2\sqrt{z_1 st}) \Psi_{2}(B'; C', C''; z_2 s, z_3 t) dt ds
\end{align}
and
\begin{align}&\Gamma_{\mathcal{B}}^{H}[(A; x), B, B'; C+I,C', C'';  z_1, z_2, z_3]\notag\\
&=x_{1}^{\frac{-C}{2}}\, \Gamma^{-1}(A) \Gamma^{-1}(B)\Gamma(C+I)\notag\\
&\times  \int_{x}^{\infty}\int_{0}^{\infty} e^{-t-s} \,t^{A-\frac{C}{2}-I} s^{B-\frac{C}{2}-I} I_{C}(2\sqrt{z_1 st}) \Psi_{2}(B'; C', C''; z_2 s, z_3 t) dt ds,
\end{align}
where $A$, $B$, $B'$, $C$, $C'$ and $C''$ are  positive stable and commutative  matrices in  $\mathbb{C}^{r\times r}$ such that $A+kI$, $B+kI$, $C+kI$, $C'+kI$ and  $C''+kI$  are  invertible for all integers $k\geq0$.
\end{corollary}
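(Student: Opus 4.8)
The plan is to specialize the double integral representation (\ref{s2eq6}) to the shifted second parameter $C+I$ and then recognize the resulting ${_0F_1}$ kernel as a Bessel (respectively modified Bessel) matrix function. First I would replace $C$ by $C+I$ throughout (\ref{s2eq6}). For the first assertion I additionally send $z_1 \mapsto -z_1$, so the hypergeometric kernel becomes ${_0F_1}(-; C+I; -z_1 st)$; for the second assertion I keep $z_1$ unchanged, giving ${_0F_1}(-; C+I; z_1 st)$. All remaining factors, namely $e^{-t-s} t^{A-I} s^{B-I}$ and $\Psi_{2}(B'; C', C''; z_2 s, z_3 t)$, are untouched by this specialization.

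The key computational step is the connection formula between ${_0F_1}$ and the Bessel matrix function. Starting from the series (\ref{j1}) for $J_{C}$ and using $\Gamma(C+(m+1)I) = (C+I)_m\,\Gamma(C+I)$, which follows from the definition of the Pochhammer matrix symbol, I would factor out $\Gamma^{-1}(C+I)$ and $w^{C/2}$ with $w = z_1 st$ to obtain $J_{C}(2\sqrt{w}) = \Gamma^{-1}(C+I)\, w^{C/2}\, {_0F_1}(-; C+I; -w)$, equivalently ${_0F_1}(-; C+I; -w) = \Gamma(C+I)\, w^{-C/2}\, J_{C}(2\sqrt{w})$. Because $A$, $B$, $C$ are commutative and $z_1, s, t$ are scalars, the power $w^{-C/2} = (z_1 st)^{-C/2}$ splits as $z_1^{-C/2} s^{-C/2} t^{-C/2}$, which I would absorb into the exponents, turning $t^{A-I}$ into $t^{A-\frac{C}{2}-I}$ and $s^{B-I}$ into $s^{B-\frac{C}{2}-I}$, while pulling $z_1^{-C/2}$ and $\Gamma(C+I)$ outside the integral. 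Substituting into the specialized (\ref{s2eq6}) yields the first assertion. The identical argument applied to the series definition of $I_{C}$ in (\ref{j2}), for which ${_0F_1}(-; C+I; w) = \Gamma(C+I)\, w^{-C/2}\, I_{C}(2\sqrt{w})$, produces the second assertion.

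I expect the main obstacle to be purely bookkeeping: justifying that the scalar factorization $w^{-C/2} = z_1^{-C/2} s^{-C/2} t^{-C/2}$ is legitimate, which relies on $\ln(z_1 st) = \ln z_1 + \ln s + \ln t$ together with commutativity so that the matrix exponentials $\exp(-\tfrac{C}{2}\ln z_1)$, $\exp(-\tfrac{C}{2}\ln s)$, $\exp(-\tfrac{C}{2}\ln t)$ commute and multiply cleanly, and checking that $\Gamma(C+I)$ commutes with the surviving matrix factors so that it may be brought to the front. The positive-stability and invertibility hypotheses on $A$, $B$, $C$, $C'$, $C''$ guarantee absolute convergence of the defining series and of the $(t,s)$-integral, so the term-by-term identification of the hypergeometric sum with its Bessel closed form under the integral sign is valid.
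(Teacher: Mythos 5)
Your proof is correct and follows exactly the route the paper intends: the paper omits proofs in this section, but the corollary is meant to follow from the integral representation (\ref{s2eq6}) by replacing $C$ with $C+I$ (and $z_1$ with $-z_1$ in the first case) and invoking the identities ${_0F_1}(-;C+I;-w)=\Gamma(C+I)\,w^{-C/2}J_{C}(2\sqrt{w})$ and ${_0F_1}(-;C+I;w)=\Gamma(C+I)\,w^{-C/2}I_{C}(2\sqrt{w})$, precisely as you do, mirroring the analogous corollaries for $\Gamma_{\mathcal{A}}^{H}$ in Section~3. Your derivation also correctly yields the prefactor $z_{1}^{-C/2}$, confirming that the factor written as $x_{1}^{-C/2}$ in the paper's statement is a typographical slip.
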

\begin{theorem}Let $A$, $B$, $B'$, $C$, $C'$ and $C''$ be  positive stable and commutative  matrices in  $\mathbb{C}^{r\times r}$ such that $A+kI$, $B+kI$, $B'+kI$, $C+kI$, $C'+kI$ and  $C''+kI$ are  invertible for all integers $k\geq0$. Then the following triple  integral representation for $\Gamma_{\mathcal{B}}^{H}$ in (\ref{s2eq2}) holds true:
\begin{align}
&\Gamma_{\mathcal{B}}^{H}[(A; x), B, B'; C,C', C''; z_1, z_2, z_3]\notag\\
&= \Gamma^{-1}(A) \Gamma^{-1}(B) \Gamma^{-1}(B')\Big[\int_{x}^{\infty}\int_{0}^{\infty} \int_{0}^{\infty}e^{-t-s-u}\, t^{A-I} s^{B-I}  u^{B'-I} \notag\\& \times {_{0}F_{1}}(-; C; z_1 st) \,{_{0}F_{1}}(-; C'; z_2 us)  \,{_{0}F_{1}}(-; C''; z_3 ut) dt ds du\Big].\label{sx1}
\end{align}
\end{theorem}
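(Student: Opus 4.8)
The plan is to mirror the proof of the triple integral representation (\ref{x1}) for $\Gamma_{\mathcal{A}}^{H}$, starting from the series definition (\ref{s2eq2}) and converting each of the three matrix symbols $[A;x]_{m+p}$, $(B)_{m+n}$ and $(B')_{n+p}$ into integrals. First I would use (\ref{1eq8}) together with (\ref{1eq5}) to write the incomplete Pochhammer symbol as $[A;x]_{m+p}\,\Gamma(A)=\int_{x}^{\infty}e^{-t}\,t^{A+(m+p-1)I}\,dt$, and similarly use the Gamma matrix function (\ref{g1}) to write $(B)_{m+n}\,\Gamma(B)=\int_{0}^{\infty}e^{-s}\,s^{B+(m+n-1)I}\,ds$ and $(B')_{n+p}\,\Gamma(B')=\int_{0}^{\infty}e^{-u}\,u^{B'+(n+p-1)I}\,du$. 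Substituting these into (\ref{s2eq2}) and factoring $\Gamma^{-1}(A)\Gamma^{-1}(B)\Gamma^{-1}(B')$ to the front produces a triple series whose $(m,n,p)$-term carries the triple integral of $e^{-t-s-u}\,t^{A+(m+p-1)I}\,s^{B+(m+n-1)I}\,u^{B'+(n+p-1)I}$ over $(x,\infty)\times(0,\infty)\times(0,\infty)$.

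Next I would interchange the order of summation and the triple integration, pull out the factors $t^{A-I}s^{B-I}u^{B'-I}$, and regroup the residual powers. The leftover $t^{m+p}s^{m+n}u^{n+p}$ combines with $z_{1}^{m}z_{2}^{n}z_{3}^{p}$ as $(z_{1}st)^{m}(z_{2}us)^{n}(z_{3}ut)^{p}$, since the index $m$ multiplies $st$, the index $n$ multiplies $us$, and the index $p$ multiplies $ut$. The crucial structural point, and the feature that distinguishes this argument from the $\Gamma_{\mathcal{A}}^{H}$ computation in (\ref{l1}), is that in (\ref{s2eq2}) the three denominator symbols $(C)^{-1}_{m}$, $(C')^{-1}_{n}$, $(C'')^{-1}_{p}$ are indexed by the single variables $m$, $n$, $p$ \emph{separately}, rather than by the coupled sum $n+p$ that occurred for $\Gamma_{\mathcal{A}}^{H}$. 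Consequently the triple series factors directly into a product of three independent single series, and no appeal to the summation formula (\ref{l2}) is needed here.

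Finally I would recognise each single series as a confluent $_{0}F_{1}$ matrix function, namely $\sum_{m\geq0}(C)^{-1}_{m}(z_{1}st)^{m}/m!={_{0}F_{1}}(-;C;z_{1}st)$ and likewise the $(C',z_{2}us)$ and $(C'',z_{3}ut)$ factors, which assembles the integrand of (\ref{sx1}) and closes the argument. I expect the only genuine obstacle to be the rigorous justification of the term-by-term integration of the matrix-valued series; this rests on absolute convergence under the positive-stability and invertibility hypotheses, exactly as in the double-integral representation (\ref{2eq6}). Everything else reduces to the commutativity of the parameter matrices, which lets the three $_{0}F_{1}$ factors and the scalar powers separate cleanly, together with the elementary regrouping of the monomials.
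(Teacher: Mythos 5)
Your proposal is correct and follows essentially the approach the paper intends: the paper omits the proof of this theorem, deferring to the method used for the $\Gamma_{\mathcal{A}}^{H}$ analogue (\ref{x1}), namely replacing $[A;x]_{m+p}$, $(B)_{m+n}$ and $(B')_{n+p}$ by their (incomplete) gamma integral representations, interchanging summation and integration, and recognising the residual series. Your one refinement — that since the denominator symbols $(C)^{-1}_{m}(C')^{-1}_{n}(C'')^{-1}_{p}$ in (\ref{s2eq2}) are decoupled, the triple series factors directly into three ${_{0}F_{1}}$ factors and the summation formula (\ref{l2}) is not needed, unlike in (\ref{l1}) — is accurate and is exactly why the $\mathcal{B}$-case proof is a simplification rather than a complication of the $\mathcal{A}$-case.
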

\begin{corollary}The following triple integral representations holds true:
\begin{align}&\Gamma_{\mathcal{B}}^{H}[(A; x), B, B'; C+I, C'+I, C''+I; -z_1, -z_2, -z_3]\notag\\
&=z_{1}^{\frac{-C}{2}} z_{2}^{\frac{-C'}{2}} z_{3}^{\frac{-C''}{2}}\, \Gamma^{-1}(A) \Gamma^{-1}(B)\Gamma^{-1}(B')\Gamma(C+I)\Gamma(C'+I)\Gamma(C''+I)\notag\\
&\times  \int_{x}^{\infty}\int_{0}^{\infty}\int_{0}^{\infty} e^{-t-s-u} \,t^{A-\frac{C}{2}-\frac{C''}{2}-I} s^{B-\frac{C}{2}-\frac{C'}{2}-I} u^{B'-\frac{C'}{2}-\frac{C''}{2}-I}\notag\\&\times  J_{C}(2\sqrt{z_1 st}) J_{C'}(2\sqrt{z_2 su})J_{C''}(2\sqrt{z_3 ut})dt dsdu
\end{align}
and
\begin{align}&\Gamma_{\mathcal{B}}^{H}[(A; x), B, B'; C+I, C'+I, C''+I; z_1, z_2, z_3]\notag\\
&=z_{1}^{\frac{-C}{2}} z_{2}^{\frac{-C'}{2}} z_{3}^{\frac{-C''}{2}}\, \Gamma^{-1}(A) \Gamma^{-1}(B)\Gamma^{-1}(B')\Gamma(C+I)\Gamma(C'+I)\Gamma(C''+I)\notag\\
&\times  \int_{x}^{\infty}\int_{0}^{\infty}\int_{0}^{\infty} e^{-t-s-u} \,t^{A-\frac{C}{2}-\frac{C''}{2}-I} s^{B-\frac{C}{2}-\frac{C'}{2}-I} u^{B'-\frac{C'}{2}-\frac{C''}{2}-I}\notag\\&\times  I_{C}(2\sqrt{z_1 st}) I_{C'}(2\sqrt{z_2 su})I_{C''}(2\sqrt{z_3 ut})dt dsdu,
\end{align}
where $A$, $B$, $B'$, $C$, $C'$ and $C''$ be  positive stable and commutative  matrices in  $\mathbb{C}^{r\times r}$ such that $A+kI$, $B+kI$, $B'+kI$, $C+kI$, $C'+kI$ and  $C''+kI$ are  invertible for all integers $k\geq0$.
\end{corollary}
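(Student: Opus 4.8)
The plan is to obtain both representations directly from the triple integral formula (\ref{sx1}) for $\Gamma_{\mathcal{B}}^{H}$ by converting each $_0F_1$ factor into a Bessel, respectively modified Bessel, matrix function. The engine of the argument is a single scalar-to-matrix dictionary entry relating $_0F_1$ to $J_{C}$. Starting from the series (\ref{j1}) and putting $z=2\sqrt{w}$, one has $(z/2)^{C+2mI}=w^{C/2}w^{m}$, while the functional equation $\Gamma(C+(m+1)I)=(C+I)_{m}\,\Gamma(C+I)$ gives $\Gamma^{-1}(C+(m+1)I)=\Gamma^{-1}(C+I)\,[(C+I)_{m}]^{-1}$, where commutativity of the parameters is essential. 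Summing the series then yields
\begin{align}
{_0F_1}(-;C+I;-w)=\Gamma(C+I)\,w^{-C/2}\,J_{C}(2\sqrt{w}),\notag
\end{align}
and, via the definition (\ref{j2}) of $I_{C}$, which removes the alternating sign, the companion identity ${_0F_1}(-;C+I;w)=\Gamma(C+I)\,w^{-C/2}\,I_{C}(2\sqrt{w})$.

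Next I would instantiate (\ref{sx1}) with the parameters $C,C',C''$ replaced by $C+I,C'+I,C''+I$ and, for the first assertion, with $z_1,z_2,z_3$ replaced by $-z_1,-z_2,-z_3$. Applying the displayed identity with $w=z_1 st$, $w=z_2 us$ and $w=z_3 ut$ respectively turns the three $_0F_1$ factors into $\Gamma(C+I)(z_1 st)^{-C/2}J_{C}(2\sqrt{z_1 st})$, $\Gamma(C'+I)(z_2 us)^{-C'/2}J_{C'}(2\sqrt{z_2 us})$ and $\Gamma(C''+I)(z_3 ut)^{-C''/2}J_{C''}(2\sqrt{z_3 ut})$. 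Pulling the three matrices $\Gamma(\cdot+I)$ and the scalar powers $z_1^{-C/2}z_2^{-C'/2}z_3^{-C''/2}$ out of the integral, and then merging the remaining factors of $t,s,u$ with the existing weights $t^{A-I},s^{B-I},u^{B'-I}$, produces exactly the exponents $A-\frac{C}{2}-\frac{C''}{2}-I$, $B-\frac{C}{2}-\frac{C'}{2}-I$ and $B'-\frac{C'}{2}-\frac{C''}{2}-I$ displayed in the statement, giving the first representation. Replacing $J$ by $I$ throughout, that is, using the modified Bessel identity on the unsigned formula, yields the second representation verbatim.

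The only point requiring genuine care is the bookkeeping of the fractional matrix powers: the split $(z_1 st)^{-C/2}=z_1^{-C/2}\,s^{-C/2}\,t^{-C/2}$ and the additivity $t^{A-I}\,t^{-C/2}\,t^{-C''/2}=t^{A-\frac{C}{2}-\frac{C''}{2}-I}$ both rely on the hypothesis that $A,B,B',C,C',C''$ form a commuting family, so that the exponent laws for $\exp((\cdot)\ln t)$ apply and the factors $\Gamma(\cdot+I)$ may be freely extracted. The interchange of summation and integration needed to recognise each $_0F_1$ as its integral is already subsumed in the proof of (\ref{sx1}), so no new analytic justification is required; for the modified Bessel version one should additionally record the admissible range of $\arg$ coming from (\ref{j2}). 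I expect this power-collecting step to be the main, though routine, obstacle, after which both identities drop out immediately.
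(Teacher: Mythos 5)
Your proposal is correct and follows exactly the route the paper intends: the paper explicitly omits the proofs in Section~4, and its Section~3 analogues of this corollary are obtained in precisely the same way, namely by substituting the dictionary ${_0F_1}(-;C+I;-w)=\Gamma(C+I)\,w^{-C/2}J_{C}(2\sqrt{w})$ and ${_0F_1}(-;C+I;w)=\Gamma(C+I)\,w^{-C/2}I_{C}(2\sqrt{w})$ into the triple integral representation (\ref{sx1}) with shifted parameters $C+I$, $C'+I$, $C''+I$ and collecting the commuting matrix powers of $t$, $s$, $u$. Your bookkeeping of the exponents and your appeal to commutativity are exactly what the omitted proof requires, so there is no gap.
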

\begin{theorem}\label{h1}Let $B'+sI$  be  invertible  for all integers $s\geq0$. Then the following recursion formula holds  true for  $\Gamma_{\mathcal{B}}^{H}$:
\begin{align}
&\Gamma_{\mathcal{B}}^{H}[(A; x), B, B'+sI; C,C', C''; z_1, z_2, z_3]\notag\\
&=\Gamma_{\mathcal{B}}^{H}[(A; x), B, B'; C,C', C''; z_1, z_2, z_3]\notag\\&\quad+ z_2{B}{C'}^{-1}\Big[\sum_{k=1}^{s}\Gamma_{\mathcal{B}}^{H}[(A; x), B+I, B'+kI; C, C'+I, C''; z_1, z_2, z_3]\Big]\notag\\
&\quad+ z_3{A}{C''}^{-1}\Big[\sum_{k=1}^{s}\Gamma_{\mathcal{B}}^{H}[(A+I; x), B, B'+kI; C, C', C''+I; z_1, z_2, z_3]\Big].\label{sr11}
\end{align}
Furthermore, if  $B'-kI$ are invertible for integers $k\leq s$, then 
\begin{align}&\Gamma_{\mathcal{B}}^{H}[(A; x), B, B'-sI; C,C', C''; z_1, z_2, z_3]\notag\\
&=\Gamma_{\mathcal{B}}^{H}[(A; x), B, B'; C,C', C''; z_1, z_2, z_3]\notag\\&\quad- z_2{B}{C'}^{-1}\Big[\sum_{k=0}^{s-1}\Gamma_{\mathcal{B}}^{H}[(A; x), B+I, B'-kI; C, C'+I, C''; z_1, z_2, z_3]\Big]\notag\\
&\quad- z_3{A}{C''}^{-1}\Big[\sum_{k=0}^{s-1}\Gamma_{\mathcal{B}}^{H}[(A+I; x), B, B'-kI; C, C', C''+I; z_1, z_2, z_3]\Big].\label{sr12}
\end{align}
where $A$, $B$, $B'$, $C$, $C'$ and $C''$ are  positive stable and commutative matrices in $\mathbb{C}^{r\times r}$ such that $C+kI$, $C'+kI$ and $C''+kI$ are invertible  matrices for all integers $k\geq 0$.
\end{theorem}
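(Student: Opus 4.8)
The plan is to mirror the argument that established the corresponding recursion (\ref{r11}) for $\Gamma_{\mathcal{A}}^{H}$, whose engine was the one-step contiguous relation (\ref{3eqp1}); the only structural change is that $\Gamma_{\mathcal{B}}^{H}$ carries the \emph{three} distinct denominator parameters $C$, $C'$, $C''$ in place of the pair $C$, $C'$. Everything reduces to proving a single contiguous matrix relation and then telescoping it. Concretely, I first aim to establish
\begin{align}
&\Gamma_{\mathcal{B}}^{H}[(A; x), B, B'+I; C,C', C''; z_1, z_2, z_3]\notag\\
&=\Gamma_{\mathcal{B}}^{H}[(A; x), B, B'; C,C', C''; z_1, z_2, z_3]\notag\\
&\quad+ z_2 B (C')^{-1}\,\Gamma_{\mathcal{B}}^{H}[(A; x), B+I, B'+I; C, C'+I, C''; z_1, z_2, z_3]\notag\\
&\quad+ z_3 A (C'')^{-1}\,\Gamma_{\mathcal{B}}^{H}[(A+I; x), B, B'+I; C, C', C''+I; z_1, z_2, z_3].\label{bcontig}
\end{align}

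To derive (\ref{bcontig}) I would work from the defining series (\ref{s2eq2}) (equivalently from the integral representation (\ref{s2eq6})) and insert the splitting $(B'+I)_{n+p}= B'^{-1}(B')_{n+p}(B'+(n+p)I)=(B')_{n+p}+(n+p)B'^{-1}(B')_{n+p}$, which is legitimate because all parameters commute. The contribution of $(B')_{n+p}$ reproduces $\Gamma_{\mathcal{B}}^{H}$ at $B'$, giving the first summand, and the factor $(n+p)$ separates into an $n$-part and a $p$-part. In the $n$-part I write $n/n!=1/(n-1)!$, shift $n\mapsto n+1$, and use $(B)_{m+n+1}=B(B+I)_{m+n}$, $(B')_{n+1+p}=B'(B'+I)_{n+p}$ and $(C')^{-1}_{n+1}=(C')^{-1}(C'+I)^{-1}_{n}$; after the cancellation $B'^{-1}B'=I$ and pulling the commuting factors $B$ and $(C')^{-1}$ to the front, this becomes exactly the $z_2 B(C')^{-1}$ term with $C'$ advanced to $C'+I$. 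The $p$-part is the one that requires care: the shift $p\mapsto p+1$ also moves the incomplete Pochhammer symbol, so I would invoke the identity $[A; x]_{m+p+1}=A\,[A+I; x]_{m+p}$, which follows from $[A;x]_{n}=\Gamma(A+nI,x)\Gamma^{-1}(A)$ together with the functional equation $\Gamma(A+I)=A\Gamma(A)$. Combined with $(B')_{n+p+1}=B'(B'+I)_{n+p}$ and $(C'')^{-1}_{p+1}=(C'')^{-1}(C''+I)^{-1}_{p}$, this produces the $z_3 A(C'')^{-1}$ term with $A$ advanced to $A+I$ and $C''$ advanced to $C''+I$, which completes (\ref{bcontig}).

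Once (\ref{bcontig}) is available, both (\ref{sr11}) and (\ref{sr12}) are pure bookkeeping. Replacing $B'$ by $B'+(k-1)I$ in (\ref{bcontig}) writes $\Gamma_{\mathcal{B}}^{H}[\dots,B'+kI,\dots]$ as $\Gamma_{\mathcal{B}}^{H}[\dots,B'+(k-1)I,\dots]$ plus the two increment terms carrying $B'+kI$; summing over $k=1,\dots,s$ telescopes the base terms and leaves precisely the two finite sums in (\ref{sr11}). For the descending relation (\ref{sr12}) I would instead put $B'\mapsto B'-I$ in (\ref{bcontig}) and rearrange to isolate $\Gamma_{\mathcal{B}}^{H}[\dots,B'-I,\dots]$, which flips the sign of the two increment terms; iterating $s$ times (admissible since each $B'-kI$ with $k\le s$ is assumed invertible) yields (\ref{sr12}). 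Alternatively, both identities may be confirmed by induction on $s\in\mathbb{N}$, as in the proof of (\ref{3eqh1}).

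The sole genuine obstacle is the $p$-shift in the derivation of (\ref{bcontig}): unlike the $n$-shift, it forces a reindexing of the \emph{incomplete} Pochhammer symbol $[A;x]_{m+p}$, so the relation $[A;x]_{m+p+1}=A\,[A+I;x]_{m+p}$ must be verified for the incomplete symbol rather than merely for the complete one. Apart from this, the commutativity hypotheses on $A$, $B$, $B'$, $C$, $C'$, $C''$ are used throughout to extract $A$, $B$, $(C')^{-1}$ and $(C'')^{-1}$ from the summations and to justify the Pochhammer factorisations, and one must keep the two denominators $C'$ and $C''$ strictly separate. This last point is exactly where $\Gamma_{\mathcal{B}}^{H}$ diverges from $\Gamma_{\mathcal{A}}^{H}$, whose two increment terms both acted on the single shared denominator $C'$.
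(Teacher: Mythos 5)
Your proposal is correct and takes essentially the same route as the paper: the paper omits the proof of this theorem as analogous to its recursion theorem for $\Gamma_{\mathcal{A}}^{H}$ (formulas (\ref{r11})--(\ref{r12})), whose proof is exactly your scheme --- establish the one-step contiguous relation via the splitting $(B'+I)_{n+p}=B'^{-1}(B')_{n+p}(B'+(n+p)I)$ in the series/integral representation, then iterate $s$ times, and for the descending case substitute $B'\mapsto B'-I$, rearrange with flipped signs, and iterate. Your explicit verification of the incomplete-symbol shift $[A;x]_{m+p+1}=A\,[A+I;x]_{m+p}$ is a detail the paper leaves implicit, and it is precisely what accounts for the replacement $(A;x)\mapsto(A+I;x)$ and the prefactor $A$ in the $z_3$ terms.
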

Other recursion formulas for the matrix functions $\Gamma_{\mathcal{B}}^{H}$ about the matrix parameter  $B'$ can be obtained as follows:
\begin{theorem}Let $B'+sI$  be  invertible  for all integers $s\geq0$. Then the following recursion formula holds  true for  $\Gamma_{\mathcal{B}}^{H}$:
\begin{align}
&\Gamma_{\mathcal{B}}^{H}[(A; x), B, B'+sI; C, C', C''; z_1, z_2, z_3]\notag\\
&=\sum_{k_1+k_2\leq s}^{}{s\choose k_1, k_2}z_{2}^{k_1} z_{3}^{k_2}{(A)_{k_2}}(B)_{k_1}(C')^{-1}_{k_1} (C'')^{-1}_{k_2} \,\notag\\
&\quad\times\Big[\Gamma_{\mathcal{B}}^{H}[(A+k_2 I; x), B+k_1I, B'+(k_1+k_2)I; C, C'+k_1I, C''+k_2 I; z_1, z_2, z_3]\Big].\label{s3eqh1}
\end{align}
Furthermore, if  $B'-kI$ are invertible for integers $k\leq s$, then 
\begin{align}&\Gamma_{\mathcal{B}}^{H}[(A; x), B, B'-sI; C, C', C''; z_1, z_2, z_3]\notag\\
&=\sum_{k_1+k_2\leq s}^{}{s\choose k_1, k_2}(-z_{2})^{k_1} (-z_{3})^{k_2}{(A)_{k_2}}(B)_{k_1}(C')^{-1}_{k_1} (C'')^{-1}_{k_2}\,\notag\\
&\quad\times\Big[\Gamma_{\mathcal{B}}^{H}[(A+k_2 I; x), B+k_1I, B'; C, C'+k_1I,  C''+k_2 I; z_1, z_2, z_3]\Big],\label{s3eqh2}
\end{align}
where $A$, $B$, $B'$, $C$, $C'$ and $C''$ are positive stable and commutative matrices in $\mathbb{C}^{r\times r}$ such that $C+kI$, $C'+kI$ and $C''+kI$  are  invertible  matrices for all integers $k\geq 0$.
\end{theorem}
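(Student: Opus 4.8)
The plan is to prove \eqref{s3eqh1} by induction on $s\in\mathbb{N}$, in exactly the spirit of the corresponding result \eqref{3eqh1} for $\Gamma_{\mathcal{A}}^{H}$; the only structural difference is that here the two lower parameters $C'$ and $C''$ must be incremented \emph{independently}. The engine of the induction is the one-step contiguous relation, which is the $s=1$ instance of \eqref{sr11}:
\begin{align}
&\Gamma_{\mathcal{B}}^{H}[(A; x), B, B'+I; C, C', C''; z_1, z_2, z_3]\notag\\
&=\Gamma_{\mathcal{B}}^{H}[(A; x), B, B'; C, C', C''; z_1, z_2, z_3]\notag\\
&\quad+ z_2 B C'^{-1}\,\Gamma_{\mathcal{B}}^{H}[(A; x), B+I, B'+I; C, C'+I, C''; z_1, z_2, z_3]\notag\\
&\quad+ z_3 A C''^{-1}\,\Gamma_{\mathcal{B}}^{H}[(A+I; x), B, B'+I; C, C', C''+I; z_1, z_2, z_3].\label{prop-cont}
\end{align}
If one prefers a self-contained derivation, \eqref{prop-cont} follows directly from the series \eqref{s2eq2} by writing $(B'+I)_{n+p}=B'^{-1}(B')_{n+p}(B'+(n+p)I)$, splitting the factor $n+p$, and shifting the summation indices $n\mapsto n-1$ and $p\mapsto p-1$; here commutativity of the parameters is precisely what allows the shifted factors to recombine as $(B)_{n}\mapsto(B+I)_{n}$, $(C')_{n}^{-1}\mapsto(C'+I)_{n}^{-1}$, and the incomplete symbol as $[A;x]_{k+1}=A\,[A+I;x]_{k}$.

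For the base case $s=1$, the sum on the right of \eqref{s3eqh1} runs over $(k_1,k_2)\in\{(0,0),(1,0),(0,1)\}$ with trinomial coefficients all equal to $1$, and it reproduces \eqref{prop-cont} verbatim. For the inductive step I would assume \eqref{s3eqh1} at $s=t$, replace $B'$ by $B'+I$ throughout, so that the left-hand side becomes the desired $\Gamma_{\mathcal{B}}^{H}[\,\cdot\,,B'+(t+1)I,\,\cdot\,]$ while each summand on the right acquires argument $B'+(k_1+k_2+1)I$, and then apply \eqref{prop-cont} with $A\mapsto A+k_2I$, $B\mapsto B+k_1I$, $C'\mapsto C'+k_1I$, $C''\mapsto C''+k_2I$ to each such summand, splitting it into three pieces.

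The bookkeeping is then routine: for the piece produced by the $z_2$-term I would use $(B)_{k_1}(B+k_1I)=(B)_{k_1+1}$ and $(C')_{k_1}^{-1}(C'+k_1I)^{-1}=(C')_{k_1+1}^{-1}$ and relabel $k_1\mapsto k_1-1$; for the piece produced by the $z_3$-term I would use $(A)_{k_2}(A+k_2I)=(A)_{k_2+1}$ and $(C'')_{k_2}^{-1}(C''+k_2I)^{-1}=(C'')_{k_2+1}^{-1}$ and relabel $k_2\mapsto k_2-1$. After these relabellings all three pieces carry the common factor $z_2^{k_1}z_3^{k_2}(A)_{k_2}(B)_{k_1}(C')_{k_1}^{-1}(C'')_{k_2}^{-1}$ multiplying the common function $\Gamma_{\mathcal{B}}^{H}[(A+k_2I;x),B+k_1I,B'+(k_1+k_2)I;C,C'+k_1I,C''+k_2I;\,\cdot\,]$, and their numerical coefficients add up as $\binom{t}{k_1,k_2}+\binom{t}{k_1-1,k_2}+\binom{t}{k_1,k_2-1}$. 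The trinomial Pascal identity $\binom{t}{k_1,k_2}+\binom{t}{k_1-1,k_2}+\binom{t}{k_1,k_2-1}=\binom{t+1}{k_1,k_2}$ (which holds since $k_1+k_2+(t+1-k_1-k_2)=t+1$) collapses the three sums into $\sum_{k_1+k_2\le t+1}\binom{t+1}{k_1,k_2}(\cdots)$, i.e. \eqref{s3eqh1} at $s=t+1$. The decreasing formula \eqref{s3eqh2} would be proved identically, starting from the $s=1$ instance of \eqref{sr12}, the only change being the signs $(-z_2)^{k_1}$ and $(-z_3)^{k_2}$ on the two lowering terms.

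The hard part will be not the conceptual content but the index-shift step: one must verify that replacing $B'$ by $B'+I$ in the induction hypothesis and then invoking \eqref{prop-cont} delivers \emph{exactly} the factors needed for the telescopings $(B)_{k_1}(B+k_1I)=(B)_{k_1+1}$ and $(C')_{k_1}^{-1}(C'+k_1I)^{-1}=(C')_{k_1+1}^{-1}$ (and their $A$, $C''$ analogues), and that the two lower parameters $C'$ and $C''$ are advanced separately, which is precisely what distinguishes this computation from the $\Gamma_{\mathcal{A}}^{H}$ case where a single parameter $C'$ absorbed both increments. All these rearrangements are legitimate because $A,B,B',C,C',C''$ are assumed commuting, so no ordering issues obstruct the recombination of Pochhammer factors.
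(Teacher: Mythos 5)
Your proposal is correct and follows essentially the paper's own approach: the paper omits the proof of this theorem, deferring implicitly to its proof of the analogous recursion \eqref{3eqh1}--\eqref{3eqh2} for $\Gamma_{\mathcal{A}}^{H}$, which is exactly your argument --- induction on $s$ driven by the one-step contiguous relation (the $s=1$ case of \eqref{sr11}), splitting each summand into three pieces, telescoping the Pochhammer factors, relabelling indices, and invoking the trinomial Pascal identity. The one point where the $\mathcal{B}$ case differs from the $\mathcal{A}$ case --- the increments being absorbed separately by $C'$ and $C''$, producing $(C')^{-1}_{k_1}(C'')^{-1}_{k_2}$ instead of $(C')^{-1}_{k_1+k_2}$ --- is handled correctly in your bookkeeping.
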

\begin{theorem}
Let $C-sI$, $C'-sI$ and $C''-sI$ be invertible matrices for all $s\geq0$. Then the following recursion formulas hold  true for  $\Gamma_{\mathcal{B}}^{H}$:
\begin{align}
&\Gamma_{\mathcal{B}}^{H}[(A; x), B, B'; C-sI, C', C'';z_1, z_2, z_3]\notag\\
&=\Gamma_{\mathcal{B}}^{H}[(A; x), B, B'; C, C', C''; z_1, z_2, z_3]\notag\\&\quad+ z_1 AB\Big[\sum_{k=1}^{s} \Gamma_{\mathcal{B}}^{H}[(A+I; x), B+I, B'; C+(2-k)I, C', C''; z_1, z_2, z_3]\notag\\&\quad\times{(C-kI)^{-1}(C-(k-1)I)^{-1}}\Big];\label{szz1}
\end{align}
\begin{align}
&\Gamma_{\mathcal{B}}^{H}[(A; x), B, B'; C, C'-sI, C''; z_1, z_2, z_3]\notag\\
&=\Gamma_{\mathcal{B}}^{H}[(A; x), B, B'; C, C', C''; z_1, z_2, z_3]\notag\\&\quad+ z_2 B B'\Big[\sum_{k=1}^{s} \Gamma_{\mathcal{B}}^{H}[(A; x), B+I, B'+I;\,C, C'+(2-k)I, C''; z_1, z_2, z_3]\,\notag\\
&\quad\times{(C'-kI)^{-1}(C'-(k-1)I)^{-1}}\Big];\label{sph2}
\end{align}
\begin{align}
&\Gamma_{\mathcal{B}}^{H}[(A; x), B, B'; C, C', C''-sI; z_1, z_2, z_3]\notag\\
&=\Gamma_{\mathcal{B}}^{H}[(A; x), B, B'; C, C', C''; z_1, z_2, z_3]\notag\\&\quad+ z_3\, A B'\Big[\sum_{k=1}^{s} \Gamma_{\mathcal{B}}^{H}[(A+I; x), B, B'+I;\,C, C', C''+(2-k)I; z_1, z_2, z_3]\,\notag\\
&\quad\times{(C''-kI)^{-1}(C''-(k-1)I)^{-1}}\Big].\label{sph3}
\end{align}
where $A$, $B$, $B'$, $C$, $C'$ and $C''$ are positive stable and commutative matrices in $\mathbb{C}^{r\times r}$ such that $C+kI$, $C'+kI$ and $C''+kI$  are invertible  matrices for all integers $k\geq 0$.
\end{theorem}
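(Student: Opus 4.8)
The plan is to mirror the proof of the corresponding recursion theorem for $\Gamma_{\mathcal{A}}^{H}$ (equations~(\ref{zz1}) and~(\ref{ph2})), handling the three denominator parameters $C$, $C'$ and $C''$ separately but by identical reasoning: for each I would first establish a single unit-decrement contiguous matrix relation and then iterate it $s$ times.

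For~(\ref{szz1}) I would begin either from the integral representation~(\ref{s2eq6}) or, more transparently, from the defining series~(\ref{s2eq2}), and invoke the algebraic identity
\[
(C-I)^{-1}_{m}=(C)^{-1}_{m}\left[I+m(C-I)^{-1}\right],
\]
which is obtained from $(C)_{m}(C-I)^{-1}_{m}=(C+(m-1)I)(C-I)^{-1}=I+m(C-I)^{-1}$ using commutativity of the parameters. Substituting this into the series for $\Gamma_{\mathcal{B}}^{H}[\,\cdots;C-I,C',C'';\cdots]$ splits it into the unshifted function plus a correction term; in the correction the factor $m/m!$ becomes $1/(m-1)!$, and the reindexing $m\mapsto m+1$ yields $[A;x]_{m+p+1}=A\,[A+I;x]_{m+p}$, $(B)_{m+n+1}=B\,(B+I)_{m+n}$ and $(C)^{-1}_{m+1}=(C+I)^{-1}_{m}C^{-1}$, together with an extra $z_1$. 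Collecting these gives the base relation
\begin{align*}
&\Gamma_{\mathcal{B}}^{H}[(A; x), B, B'; C-I, C', C''; z_1, z_2, z_3]
=\Gamma_{\mathcal{B}}^{H}[(A; x), B, B'; C, C', C''; z_1, z_2, z_3]\\
&\quad+ z_1 AB\,\Gamma_{\mathcal{B}}^{H}[(A+I; x), B+I, B'; C+I, C', C''; z_1, z_2, z_3]\,C^{-1}(C-I)^{-1},
\end{align*}
which is the exact analogue of~(\ref{2eqp9}). Applying it successively with $C$ replaced by $C-I, C-2I,\dots$ and back-substituting, the leading inverse factors telescope so that the $k$-th term acquires $(C-kI)^{-1}(C-(k-1)I)^{-1}$, producing~(\ref{szz1}) after $s$ steps.

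The formulas~(\ref{sph2}) and~(\ref{sph3}) follow by the same two-step scheme applied to $C'$ and $C''$. The only change is the summation index tied to the decremented parameter: since $C'$ sits in $(C')^{-1}_{n}$, the shift $n\mapsto n+1$ acts on $(B)_{m+n}$ and $(B')_{n+p}$, producing the coefficient $z_2 BB'$ and the increments $B\mapsto B+I$, $B'\mapsto B'+I$, $C'\mapsto C'+I$ (leaving $A$ fixed); whereas $C''$ sits in $(C'')^{-1}_{p}$, so the shift $p\mapsto p+1$ acts on $[A;x]_{m+p}$ and $(B')_{n+p}$, producing $z_3 AB'$ and the increments $A\mapsto A+I$, $B'\mapsto B'+I$, $C''\mapsto C''+I$. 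Iterating each base relation $s$ times gives~(\ref{sph2}) and~(\ref{sph3}).

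The main obstacle is purely the matrix bookkeeping rather than any analytic difficulty: one must check the Pochhammer-inverse identity (which genuinely relies on commutativity) and then keep the two-factor inverses $(C-kI)^{-1}(C-(k-1)I)^{-1}$ correctly ordered on the right through every iteration. Because $A$, $B$, $B'$, $C$, $C'$ and $C''$ are assumed commutative, all these factors commute and the ordering is harmless; the delicate point is only the index-shift accounting that determines which of $A$, $B$, $B'$ is incremented in each of the three cases.
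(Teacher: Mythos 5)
Your proposal is correct and follows essentially the same route as the paper: the paper omits the proof of this theorem but proves the analogous result for $\Gamma_{\mathcal{A}}^{H}$ by exactly your two-step scheme, namely deriving the unit-decrement contiguous relation from the transformation $(C-I)^{-1}_{m}=(C)^{-1}_{m}\left[I+m(C-I)^{-1}\right]$ and then iterating it $s$ times so the inverse factors $(C-kI)^{-1}(C-(k-1)I)^{-1}$ accumulate term by term. Your index-shift bookkeeping (which of $A$, $B$, $B'$ is incremented for each of $C$, $C'$, $C''$) matches the statement, so nothing is missing.
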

\begin{theorem}Let $A$, $B$, $B'$, $C$, $C'$ and $C''$ be  positive stable  and commutative matrices  in  $\mathbb{C}^{r\times r}$ such that  $C+kI$, $C'+kI$ and  $C''+kI$  are invertible for all integers $k\geq0$. Then the following  derivative formulas  for $\Gamma_{\mathcal{B}}^{H}$ in (\ref{s2eq2}) hold true: 
\begin{align}
&\frac{\partial^{m}}{\partial z_{1}^{m}}\Gamma_{\mathcal{B}}^{H}[(A; x), B, B'; C,C', C''; z_1, z_2, z_3]\notag\\&= (A)_{m}(B)_{m} (C)^{-1}_{m}\Big[\Gamma_{\mathcal{B}}^{H}[(A+mI; x), B+mI, B'; C+mI,C', C''; z_1, z_2, z_3]\Big];\label{sd1}\\
&\frac{\partial^{m+n}}{\partial z_{1}^{m}\partial z_{2}^{n}}\Gamma_{\mathcal{B}}^{H}[(A; x), B, B'; C,C', C''; z_1, z_2, z_3]\notag\\&= (A)_{m}(B)_{m+n}(B')_{n}(C)^{-1}_{m} (C')^{-1}_{n}\notag\\& \Big[\Gamma_{\mathcal{B}}^{H}[(A+mI; x), B+(m+n)I, B'+nI; C+mI,C'+nI, C''; z_1, z_2, z_3]\Big];\label{sd2}\\
&\frac{\partial^{m+n+p}}{\partial z_{1}^{m}\partial z_{2}^{n}\partial z_{3}^{p}}\Gamma_{\mathcal{B}}^{H}[(A; x), B, B'; C,C', C''; z_1, z_2, z_3]\notag\\&= (A)_{m+p}(B)_{m+n}(B')_{n+p}(C)^{-1}_{m} (C')^{-1}_{n} (C'')^{-1}_{p}\notag\\& \Big[\Gamma_{\mathcal{B}}^{H}[(A+(m+p)I; x), B+(m+n)I, B'+(n+p)I; C+mI,C'+nI, C''+pI; z_1, z_2, z_3]\Big].\label{sd3}
\end{align}
\end{theorem}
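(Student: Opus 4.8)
The plan is to establish all three formulas by differentiating the defining series (\ref{s2eq2}) term by term and then reassembling the outcome into another instance of $\Gamma_{\mathcal{B}}^{H}$ by means of the shift identities for the (incomplete) Pochhammer symbols. One could instead imitate the proof given above for $\Gamma_{\mathcal{A}}^{H}$ by differentiating the integral representation (\ref{s2eq6}) and inducting, but I will take the series route since it disposes of all three variables simultaneously and makes the coupled parameter shifts transparent.

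First I would record the elementary shift identities that do the real work. From the definitions (\ref{1eq8}), (\ref{g1}), the relation $\Gamma(A+I)=A\Gamma(A)$, and the commutativity hypothesis one obtains
\[
[A;x]_{N+(m+p)}=(A)_{m+p}\,[A+(m+p)I;x]_{N},
\]
since functions of the single matrix $A$ commute and $\Gamma(A+(m+p)I)\Gamma^{-1}(A)=(A)_{m+p}$. The ordinary Pochhammer symbols split in the same way,
\[
(B)_{M+(m+n)}=(B)_{m+n}\,(B+(m+n)I)_{M},\qquad (C)^{-1}_{i+m}=(C)^{-1}_{m}\,(C+mI)^{-1}_{i},
\]
with the analogous relations for $B'$, $C'$, $C''$. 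The commutativity of the parameter matrices is exactly what allows the ``constant'' factors $(A)_{m+p}$, $(B)_{m+n}$, $(B')_{n+p}$, and the inverse factors to be pulled to the front without regard to order.

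Next I would differentiate (\ref{s2eq2}) under the summation sign, using $\frac{\partial^{m}}{\partial z_1^{m}}\frac{z_1^{a}}{a!}=\frac{z_1^{a-m}}{(a-m)!}$ (and likewise in $z_2$, $z_3$) and re-indexing $a=i+m$, $b=j+n$, $c=k+p$. After substituting the shift identities, the summand over $(i,j,k)$ is precisely the summand of
\[
\Gamma_{\mathcal{B}}^{H}[(A+(m+p)I;x),B+(m+n)I,B'+(n+p)I;C+mI,C'+nI,C''+pI;z_1,z_2,z_3],
\]
while the factors independent of $(i,j,k)$ collect into the prefactor $(A)_{m+p}(B)_{m+n}(B')_{n+p}(C)^{-1}_{m}(C')^{-1}_{n}(C'')^{-1}_{p}$. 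This yields (\ref{sd3}), and (\ref{sd1}), (\ref{sd2}) are the specializations $n=p=0$ and $p=0$, so they require no separate argument.

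The main obstacle is bookkeeping rather than anything conceptual: because the $z_1$-derivative shifts $A$, $B$, $C$ simultaneously, the $z_2$-derivative shifts $B$, $B'$, $C'$, and the $z_3$-derivative shifts $A$, $B'$, $C''$, the coupled exponents $m+p$, $m+n$, $n+p$ appearing in the three symbols $[A;x]_{m+p}$, $(B)_{m+n}$, $(B')_{n+p}$ all move in a correlated way. I would therefore carry out the re-indexing one variable at a time and only read off the shifted parameters at the end. The sole analytic point to note in passing is that the termwise differentiation is legitimate because the series (\ref{s2eq2}) converges absolutely and uniformly on compact subsets of its domain, so the interchange of $\partial/\partial z_j$ with $\sum$ is justified.
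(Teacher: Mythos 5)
Your proof is correct, but it follows a genuinely different route from the paper's. The paper in fact gives no proof of this theorem at all --- Section~4 opens with ``We omit the proofs of the given below theorems'' --- and the implied argument is the one written out for $\Gamma_{\mathcal{A}}^{H}$ in Section~3: differentiate the double integral representation (\ref{s2eq6}) (the analogue of (\ref{2eq6})) once with respect to $z_1$, recognize the result as the case $m=1$, finish by induction on $m$, and leave the mixed-derivative formulas to ``analogous'' arguments involving the Humbert function $\Psi_{2}$. You instead differentiate the defining series (\ref{s2eq2}) termwise, re-index, and invoke the shift identities $[A;x]_{N+m+p}=(A)_{m+p}\,[A+(m+p)I;x]_{N}$, $(B)_{M+m+n}=(B)_{m+n}(B+(m+n)I)_{M}$ and $(C)^{-1}_{i+m}=(C)^{-1}_{m}(C+mI)^{-1}_{i}$, all of which are valid here because every factor is a limit of polynomials in a single parameter matrix and the parameter matrices mutually commute, so the constant-in-$(i,j,k)$ factors may be collected in front. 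Your route buys three things: all three formulas drop out of one computation, the first and second being the specializations $n=p=0$ and $p=0$ of the third; it works under exactly the hypotheses stated in the theorem, whereas the integral-representation route additionally needs $A$ and $B$ positive stable with $A+kI$, $B+kI$ invertible just to write down (\ref{s2eq6}); and the correlated parameter shifts $m+p$, $m+n$, $n+p$ are completely transparent instead of being buried in derivative formulas for $\Psi_{2}$. The one point where your argument is no more rigorous than the paper's is the interchange of $\partial/\partial z_{j}$ with the summation: you assert locally uniform absolute convergence without proof, while the paper differentiates its integral ``formally'' and never establishes a convergence region for these series either, so the two treatments stand on the same formal footing there.
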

\begin{theorem} Let  $A$, $B$, $B'$, $C$, $C'$ and $C''$ be  positive stable  and commutative matrices in  $\mathbb{C}^{r\times r}$ such that  $C+kI$, $C'+kI$ and  $C''+kI$ are invertible for all integer $k\geq0$. Then the following  recurrence relation  for $\Gamma_{\mathcal{B}}^{H}$ in (\ref{s2eq2}) holds true: 
\begin{align}&\Gamma_{\mathcal{B}}^{H}[(A; x), B, B'; C,C', C''; z_1, z_2, z_3]=\Gamma_{\mathcal{B}}^{H}[(A; x), B, B'; C-I,C', C''; z_1, z_2, z_3]\notag\\
&-ABC^{-1}(C-I)^{-1} \Gamma_{\mathcal{B}}^{H}[(A+I; x), B+I, B'; C+I,C', C''; z_1, z_2, z_3].
\end{align}
\end{theorem}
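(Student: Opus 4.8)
The plan is to establish this recurrence in exactly the same spirit as the corresponding recurrence for $\Gamma_{\mathcal{A}}^{H}$, namely by feeding a three-term contiguous relation for the confluent factor ${_{0}F_{1}}$ into the integral representation (\ref{s2eq6}). First I would record the matrix contiguous relation already invoked in the $\Gamma_{\mathcal{A}}^{H}$ case,
\begin{align*}
{_{0}F_{1}}(-; C; z)= {_{0}F_{1}}(-; C-I; z)- z\, C^{-1}(C-I)^{-1}\,{_{0}F_{1}}(-; C+I; z),
\end{align*}
and specialise it to the argument $z=z_1 st$ that appears inside the double integral (\ref{s2eq6}) for $\Gamma_{\mathcal{B}}^{H}[(A;x),B,B';C,C',C'';z_1,z_2,z_3]$. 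Since the Humbert factor $\Psi_{2}(B'; C', C''; z_2 s, z_3 t)$ and the weight $e^{-t-s}t^{A-I}s^{B-I}$ are independent of $C$, this substitution splits the integrand, and hence the whole integral, into two pieces without disturbing the $\Psi_2$ part.

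The first piece is, term for term, the integral representation of $\Gamma_{\mathcal{B}}^{H}[(A;x),B,B';C-I,C',C'';z_1,z_2,z_3]$, so it reassembles immediately. For the second piece I would absorb the extra monomial $st$ into the powers via $st\,t^{A-I}s^{B-I}=t^{(A+I)-I}s^{(B+I)-I}$, and rewrite the outer prefactor using the functional equation $\Gamma(A+I)=A\Gamma(A)$ in the form $\Gamma^{-1}(A)\Gamma^{-1}(B)=AB\,\Gamma^{-1}(A+I)\Gamma^{-1}(B+I)$. The integral left over then matches precisely the representation (\ref{s2eq6}) for $\Gamma_{\mathcal{B}}^{H}[(A+I;x),B+I,B';C+I,C',C'';z_1,z_2,z_3]$, and collecting the surviving scalar and matrix coefficients into the single factor $ABC^{-1}(C-I)^{-1}$ produces the claimed recurrence.

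The only genuinely delicate point is the bookkeeping of the matrix factors: the matrices $A$, $B$ and $C^{-1}(C-I)^{-1}$ must be extracted from the integral and reordered into the product $ABC^{-1}(C-I)^{-1}$, which is legitimate precisely because $A$, $B$, $C$ (and hence $C-I$ and $C+I$) are assumed commutative, so that $C^{-1}(C-I)^{-1}$ commutes past $t^{A-I}s^{B-I}$ and $\Psi_2$ when pulled out. Convergence and the interchange of summation and integration are inherited directly from the validity of (\ref{s2eq6}) under the stated positive-stability and invertibility hypotheses, so no fresh analytic estimates are required. I therefore expect the main obstacle to be this matrix reordering together with the correct tracking of the $\Gamma$-shift $\Gamma(A+I)=A\Gamma(A)$, rather than any essentially new difficulty beyond the $\Gamma_{\mathcal{A}}^{H}$ argument.
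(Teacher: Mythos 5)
Your plan is exactly the paper's own argument: the paper proves the corresponding recurrence for $\Gamma_{\mathcal{A}}^{H}$ by inserting the ${_{0}F_{1}}$ contiguous relation into the double integral representation, and it omits the $\Gamma_{\mathcal{B}}^{H}$ proof as analogous. Your version correctly leaves the Humbert factor $\Psi_{2}$ untouched, and the bookkeeping steps you single out ($st\,t^{A-I}s^{B-I}=t^{(A+I)-I}s^{(B+I)-I}$, $\Gamma^{-1}(A)\Gamma^{-1}(B)=AB\,\Gamma^{-1}(A+I)\Gamma^{-1}(B+I)$, commutativity to pull $C^{-1}(C-I)^{-1}$ out of the integral) are all sound.

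There is, however, one concrete step that does not add up as written: the contiguous relation is applied at the argument $z=z_1st$, so the third term carries the coefficient $z_1st\,C^{-1}(C-I)^{-1}$. After the monomial $st$ is absorbed into the powers of $t$ and $s$, a scalar factor $z_1$ survives, and it cannot be ``collected into'' $ABC^{-1}(C-I)^{-1}$ as you assert. What your computation actually proves is
\begin{align*}
&\Gamma_{\mathcal{B}}^{H}[(A; x), B, B'; C,C', C''; z_1, z_2, z_3]=\Gamma_{\mathcal{B}}^{H}[(A; x), B, B'; C-I,C', C''; z_1, z_2, z_3]\\
&\quad-z_1\,ABC^{-1}(C-I)^{-1}\, \Gamma_{\mathcal{B}}^{H}[(A+I; x), B+I, B'; C+I,C', C''; z_1, z_2, z_3],
\end{align*}
which you can confirm independently from the series (\ref{s2eq2}) using $(C-I)^{-1}_{m}=(C)^{-1}_{m}\left[I+m(C-I)^{-1}\right]$ together with the shifts $[A;x]_{m+1+p}=A\,[A+I;x]_{m+p}$, $(B)_{m+1+n}=B\,(B+I)_{m+n}$ and $(C)^{-1}_{m+1}=(C+I)^{-1}_{m}C^{-1}$: the factor $m$ kills the $m=0$ terms and the reindexing $m\mapsto m+1$ produces exactly one power of $z_1$. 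So the $z_1$ genuinely belongs in the recurrence; its absence is a typo in the paper's statement (the same typo occurs in the stated $\Gamma_{\mathcal{A}}^{H}$ analogue in Section~3, whose proof via the contiguous relation likewise forces the $z_1$), and your write-up inherits it at the final ``collecting coefficients'' step. Apart from restoring this factor, your proof is complete and coincides with the paper's.
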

\section{The  incomplete Srivastava's  triple  hypergeometric matrix  functions $\gamma_{\mathcal{C}}^{H}$ and $\Gamma_{\mathcal{C}}^{H}$}
In this section, we introduce the incomplete Srivastava's  triple  hypergeometric matrix  functions $\gamma_{\mathcal{C}}^{H}$ and $\Gamma_{\mathcal{C}}^{H}$ as follows: 
\begin{align}
&\gamma_{\mathcal{C}}^{H}[(A; x), B, B'; C; z_1, z_2, z_3]\notag\\&=\sum_{m, n, p\geq 0}(A;x)_{m+p} (B)_{m+n} (B')_{n+p} (C)^{-1}_{m+n+p}\frac{z_{1}^{m}z_2^{n}z_3^{p}}{m! n! p!},\label{ps2eq1}\\
&\Gamma_{\mathcal{C}}^{H}[(A; x), B, B'; C; z_1, z_2, z_3]\notag\\&=\sum_{m, n, p\geq 0}[A;x]_{m+p} (B)_{m+n} (B')_{n+p} (C)^{-1}_{m+n+p}\frac{z_{1}^{m}z_2^{n}z_3^{p}}{m! n! p!}.\label{ps2eq2}
\end{align}
where $A$, $B$, $B'$  and $C$ are  positive stable and commutative matrices in  $\mathbb{C}^{r\times r}$ such that $C+kI$  is invertible for all integers $k\geq0$.\\
From (\ref{1eq9}), we have the following decomposition formula
\begin{align}
&\gamma_{\mathcal{C}}^{H}[(A; x), B, B'; C ; z_1, z_2, z_3]+ \Gamma_{\mathcal{C}}^{H}[(A; x), B, B'; C ; z_1, z_2, z_3]\notag\\&= H_{\mathcal{C}}[A, B, B'; C ; z_1, z_2, z_3].\label{ps2eq3}
\end{align}
where $H_{\mathcal{C}}[A, B, B'; C ; z_1, z_2, z_3]$ is the Srivastava's  triple  hypergeometric matrix  functions \cite{RD1}.

\begin{remark}
It is quite evident to note that the special cases of (\ref{ps2eq1}) and (\ref{ps2eq2}) when $z_2 = 0$ reduce to the
known incomplete families of the first  Appell hypergeometric  matrix functions \cite{A1}. Also, the special
cases of (\ref{ps2eq1}) and (\ref{ps2eq2}) when $z_2 = 0$ and  $z_3 = 0$ or $z_1 = 0$ are seen to yield the known incomplete families of Gauss hypergeometric  matrix functions \cite{Ab}.\end{remark}

To discuss the properties and characteristics of $\gamma_{\mathcal{C}}^{H}[(A; x), B, B'; C ; z_1, z_2, z_3]$, it is sufficient to determine the properties of $\Gamma_{\mathcal{C}}^{H}[(A; x), B, B'; C ; z_1, z_2, z_3]$ according to the decomposition formula (\ref{s2eq3}).We omit the proofs of the given below theorems.
\begin{theorem}Let $A$, $B$, $B'$   and $C$ be  positive stable and commutative matrices in  $\mathbb{C}^{r\times r}$ such that $C+kI$  is invertible for all integers $k\geq0.$ Then the function defined by 
${\mathcal{T}_3}={\mathcal{T}_3}(z_1, z_2, z_3) =\gamma_{\mathcal{C}}^{H}[(A; x), B, B'; C ; z_1, z_2, z_3]+\Gamma_{\mathcal{C}}^{H}[(A; x), B, B'; C ; z_1, z_2, z_3]$   satisfies the following system of partial differential equations:
\begin{align}
&\Big[z_1\frac{\partial}{\partial z_1}{(z_1\frac{\partial}{\partial z_1}+z_2\frac{\partial}{\partial z_2}+z_3\frac{\partial}{\partial z_3}+C-I)- z_1(z_1\frac{\partial}{\partial z_1}+z_3\frac{\partial}{\partial z_3}+A)(z_1\frac{\partial}{\partial z_1}+z_2\frac{\partial}{\partial z_2}+B)}\Big] \mathcal{T}_3=O,\label{p2eq4}\\
&\Big[z_2\frac{\partial}{\partial z_2}{(z_1\frac{\partial}{\partial z_1}+z_2\frac{\partial}{\partial z_2}+z_3\frac{\partial}{\partial z_3}+C-I)- z_2(z_2\frac{\partial}{\partial z_2}+z_1\frac{\partial}{\partial z_1}+B)(z_2\frac{\partial}{\partial z_2}+z_3\frac{\partial}{\partial z_3}+B')}\Big] \mathcal{T}_3=O,\label{pm1}\\
&\Big[z_3\frac{\partial}{\partial z_3}{(z_1\frac{\partial}{\partial z_1}+z_2\frac{\partial}{\partial z_2}+z_3\frac{\partial}{\partial z_3}+C-I)- z_3(z_1\frac{\partial}{\partial z_1}+z_3\frac{\partial}{\partial z_3}+A)(z_2\frac{\partial}{\partial z_2}+z_3\frac{\partial}{\partial z_3}+B')}\Big] \mathcal{T}_3=O.\label{ps2eq5}
\end{align}
\end{theorem}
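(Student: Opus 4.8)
The plan is to verify the system directly from the power-series definition, using the decomposition formula (\ref{ps2eq3}), which identifies $\mathcal{T}_3=H_{\mathcal{C}}[A,B,B';C;z_1,z_2,z_3]=\sum_{m,n,p\geq 0}\Lambda_{m,n,p}\,z_1^m z_2^n z_3^p$ with
\begin{align*}
\Lambda_{m,n,p}=(A)_{m+p}(B)_{m+n}(B')_{n+p}(C)^{-1}_{m+n+p}\frac{1}{m!\,n!\,p!}.
\end{align*}
I would introduce the Euler operators $\theta_i:=z_i\frac{\partial}{\partial z_i}$ for $i=1,2,3$ and write $\theta:=\theta_1+\theta_2+\theta_3$; each $\theta_i$ acts diagonally on monomials, so $\theta_1 z_1^m z_2^n z_3^p=m\,z_1^m z_2^n z_3^p$, and similarly for $\theta_2,\theta_3$. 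In this notation the operator in (\ref{p2eq4}) reads $\theta_1(\theta+C-I)-z_1(\theta_1+\theta_3+A)(\theta_1+\theta_2+B)$, so the entire argument becomes a term-by-term computation on the series.

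First I would apply $\theta_1(\theta+C-I)$ to $\mathcal{T}_3$: since the operators are diagonal, the coefficient of $z_1^m z_2^n z_3^p$ is $m\big((m+n+p-1)I+C\big)\Lambda_{m,n,p}$. Next, the product $(\theta_1+\theta_3+A)(\theta_1+\theta_2+B)$ multiplies the same monomial by $\big((m+p)I+A\big)\big((m+n)I+B\big)$, and the prefactor $z_1$ then shifts $m\mapsto m+1$; reindexing, the coefficient of $z_1^m z_2^n z_3^p$ contributed by the second term is $\big((m+p-1)I+A\big)\big((m+n-1)I+B\big)\Lambda_{m-1,n,p}$, the $m=0$ contribution being absent, consistently with the factor $m$ on the other side. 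Equating coefficients reduces (\ref{p2eq4}) to the single matrix recurrence
\begin{align*}
m\big((m+n+p-1)I+C\big)\Lambda_{m,n,p}=\big((m+p-1)I+A\big)\big((m+n-1)I+B\big)\Lambda_{m-1,n,p}.
\end{align*}

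To close the argument I would verify this recurrence from the shift identities for the Pochhammer matrix symbols, namely $(A)_{m+p}=\big((m+p-1)I+A\big)(A)_{m+p-1}$, $(B)_{m+n}=\big((m+n-1)I+B\big)(B)_{m+n-1}$ and $(C)^{-1}_{m+n+p}=\big((m+n+p-1)I+C\big)^{-1}(C)^{-1}_{m+n+p-1}$, together with $m!=m\,(m-1)!$. Substituting these into $\Lambda_{m,n,p}$ expresses it through $\Lambda_{m-1,n,p}$ and, after collecting factors, reproduces exactly the displayed recurrence. Equations (\ref{pm1}) and (\ref{ps2eq5}) follow by the same scheme, shifting the summation index $n$ (respectively $p$) rather than $m$, and using the symbols $(B)_{m+n},(B')_{n+p}$ (respectively $(A)_{m+p},(B')_{n+p}$) in place of the factors above.

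The only genuine obstacle is the matrix bookkeeping: the shift identities produce the factors $A+kI$, $B+kI$ and $(C+kI)^{-1}$ in a fixed order, and to match the recurrence one must commute them past one another and past $\Lambda_{m-1,n,p}$, and then cancel $\big((m+n+p-1)I+C\big)$ against its inverse. This is precisely where the hypothesis that $A,B,B',C$ are commuting matrices is used, while the invertibility of $C+kI$ for $k\geq 0$ is what makes $(C)^{-1}_{m+n+p}$ and its shift well defined; granting these, the reordering is legitimate and all three recurrences hold identically. As an alternative, one may simply note that by (\ref{ps2eq3}) we have $\mathcal{T}_3=H_{\mathcal{C}}$ and invoke the matrix differential system already established for $H_{\mathcal{C}}$ in \cite{RD1}.
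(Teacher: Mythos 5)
Your proof is correct, but it takes a genuinely different route from the paper's. The paper in fact omits the proof of this theorem entirely (Section 5 states ``We omit the proofs of the given below theorems''); the intended argument is the one written out for the analogous theorem in Section 3: by the decomposition formula (\ref{ps2eq3}), $\mathcal{T}_3$ coincides with the complete Srivastava triple hypergeometric matrix function $H_{\mathcal{C}}[A,B,B';C;z_1,z_2,z_3]$, which is already known to satisfy this system of matrix differential equations by the result of Dwivedi and Sahai \cite{RD1} --- and nothing more is said. You use the same decomposition as a first step, but instead of citing \cite{RD1} you actually prove that $H_{\mathcal{C}}$ satisfies the system: writing each equation in Euler operators $\theta_i=z_i\,\partial/\partial z_i$, you reduce it to a coefficient identity such as
\begin{align*}
m\big((m+n+p-1)I+C\big)\Lambda_{m,n,p}=\big((m+p-1)I+A\big)\big((m+n-1)I+B\big)\Lambda_{m-1,n,p},
\end{align*}
and you verify this from the shift identities of the Pochhammer matrix symbols; the computation checks out, and you correctly identify where the hypotheses enter --- commutativity of $A,B,B',C$ to reorder the matrix factors and cancel $C+(m+n+p-1)I$ against its inverse, and invertibility of $C+kI$ to make $(C)^{-1}_{m+n+p}$ and its shift meaningful. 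What the paper's route buys is brevity, at the cost of an external dependency; what your route buys is a self-contained argument that in effect reproves the relevant case of \cite{RD1} and makes the role of each hypothesis explicit. Your closing remark, that one may simply invoke the decomposition together with \cite{RD1}, is precisely the paper's (implicit) proof.
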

\begin{theorem} Let $A$, $B$, $B'$ and $C$ be  positive stable and commutative  matrices in  $\mathbb{C}^{r\times r}$ such that $A+kI$, $B+kI$ and  $C+kI$  are  invertible for all integers $k\geq0$. Then the following integral representation for $\Gamma_{\mathcal{C}}^{H}$ in (\ref{ps2eq2}) holds true:
\begin{align}
&\Gamma_{\mathcal{C}}^{H}[(A; x), B, B'; C ; z_1, z_2, z_3]= \Gamma^{-1}(A) \Gamma^{-1}(B)\notag\\&\times \Big[\int_{x}^{\infty} \int_{0}^{\infty}e^{-t-s}\, t^{A-I} s^{B-I}  \Phi_{3}(B'; C; z_2 s+ z_3 t, z_1 st) dt ds\Big],\label{ps2eq6}
\end{align}where $\Phi_{3}$ is the Humbert's matrix function  defined by \cite{MA, SZ}
\begin{align}\Phi_{3}(B'; C ; z_1, z_2)=\sum_{m, n \geq 0}^{}(B')_{m} (C)^{-1}_{m+n} \frac{z_{1}^{m} z_{2}^{n}}{m! n!}.
\end{align} 
\end{theorem}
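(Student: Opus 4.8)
The plan is to adapt, essentially verbatim, the argument that established the integral representation (\ref{2eq6}) for $\Gamma_{\mathcal{A}}^{H}$, the only structural difference being that the denominator parameter now appears as the single coupled factor $(C)^{-1}_{m+n+p}$ rather than as a product over separate indices. First I would start from the defining series (\ref{ps2eq2}) and replace the two leading symbols by their integral forms: the incomplete Pochhammer symbol through $[A;x]_{m+p}=\Gamma(A+(m+p)I,x)\,\Gamma^{-1}(A)$ from (\ref{1eq8}) together with the incomplete gamma integral (\ref{1eq5}), and the ordinary Pochhammer symbol through $(B)_{m+n}=\Gamma(B+(m+n)I)\,\Gamma^{-1}(B)$ together with the gamma integral (\ref{g1}). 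Because $A$, $B$, $B'$ and $C$ are commutative, the factors $\Gamma^{-1}(A)$ and $\Gamma^{-1}(B)$ may be pulled to the front and all matrix factors reordered freely, so this substitution produces no ordering difficulties.

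After the substitution each summand carries $t^{A+(m+p-1)I}$ and $s^{B+(m+n-1)I}$ under the double integral $\int_{x}^{\infty}\int_{0}^{\infty}e^{-t-s}\,(\cdots)\,dt\,ds$. I would factor out $t^{A-I}s^{B-I}$ and absorb the remaining integer powers into the summation variables, using the identity $t^{(m+p)I}s^{(m+n)I}z_1^m z_2^n z_3^p=(z_1st)^m(z_2s)^n(z_3t)^p$. Interchanging summation and integration then leaves
\[
\Gamma^{-1}(A)\Gamma^{-1}(B)\int_{x}^{\infty}\!\!\int_{0}^{\infty}e^{-t-s}t^{A-I}s^{B-I}\Big(\sum_{m,n,p\geq0}(B')_{n+p}(C)^{-1}_{m+n+p}\frac{(z_1st)^m(z_2s)^n(z_3t)^p}{m!\,n!\,p!}\Big)dt\,ds.
\]

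The decisive step is to identify the inner triple series with the Humbert function $\Phi_{3}(B';C;z_2s+z_3t,z_1st)$. Since both $(B')_{n+p}$ and $(C)^{-1}_{m+n+p}$ depend on $n$ and $p$ only through the combination $n+p$, I would apply the summation formula (\ref{l2}) with $z_1\mapsto z_2s$ and $z_2\mapsto z_3t$ to collapse the $n$- and $p$-sums into a single index $q=n+p$, replacing $(z_2s)^n(z_3t)^p/(n!\,p!)$ by $(z_2s+z_3t)^q/q!$. The series then reads $\sum_{m,q\geq0}(B')_{q}(C)^{-1}_{m+q}(z_2s+z_3t)^q(z_1st)^m/(q!\,m!)$, which is exactly the definition of $\Phi_{3}$ with first argument $z_2s+z_3t$ and second argument $z_1st$; substituting this back yields (\ref{ps2eq6}).

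The main obstacle is technical rather than conceptual: rigorously justifying the interchange of the triple summation with the double integral and the termwise application of (\ref{l2}). I would handle this by invoking absolute convergence of the matrix series, controlled in the usual way for these incomplete hypergeometric matrix functions by bounding $\|(B')_{n+p}(C)^{-1}_{m+n+p}\|$ together with the incomplete gamma factors and comparing against the convergent scalar series for $\Phi_{3}$, so that Fubini's theorem applies on the region $t\in[x,\infty)$, $s\in[0,\infty)$. Everything else is the same bookkeeping already carried out for (\ref{2eq6}).
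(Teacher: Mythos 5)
Your proposal is correct and is exactly the argument the paper intends: Section~5 explicitly omits its proofs because they mirror Section~3, and your derivation reproduces the proof of (\ref{2eq6}) step for step --- substituting the incomplete gamma integral for $[A;x]_{m+p}$ and the gamma integral for $(B)_{m+n}$, factoring $t^{A-I}s^{B-I}$, and collapsing the $n,p$-sums via the summation formula (\ref{l2}), which applies precisely because $(B')_{n+p}$ and $(C)^{-1}_{m+n+p}$ depend on $n$ and $p$ only through $n+p$, yielding $\Phi_{3}(B';C;z_2s+z_3t,z_1st)$. The only difference from the paper's template is bookkeeping for the coupled denominator parameter $(C)^{-1}_{m+n+p}$, which you handle correctly.
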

\begin{theorem}Let $A$, $B$, $B'$ and $C$ be  positive stable and commutative matrices in  $\mathbb{C}^{r\times r}$ such that $A+kI$, $B+kI$, $B'+kI$ and  $C+kI$ are  invertible for all integers $k\geq0$. Then the following triple  integral representation for $\Gamma_{\mathcal{C}}^{H}$ in (\ref{ps2eq2}) holds true:
\begin{align}
&\Gamma_{\mathcal{C}}^{H}[(A; x), B, B'; C ; z_1, z_2, z_3]\notag\\
&= \Gamma^{-1}(A) \Gamma^{-1}(B) \Gamma^{-1}(B')\Big[\int_{x}^{\infty}\int_{0}^{\infty} \int_{0}^{\infty}e^{-t-s-u}\, t^{A-I} s^{B-I}  u^{B'-I} \notag\\& \times {_{0}F_{1}}(-; C ; z_1 st+z_2 us+z_3ut) dt ds du\Big].\label{psx1}
\end{align}
\end{theorem}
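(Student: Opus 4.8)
The plan is to proceed exactly as in the proof of the triple integral representation (\ref{x1}) for $\Gamma_{\mathcal{A}}^{H}$ and of (\ref{sx1}) for $\Gamma_{\mathcal{B}}^{H}$, starting from the defining series (\ref{ps2eq2}) and replacing each of the three Pochhammer-type symbols by its integral form. First I would write the incomplete Pochhammer symbol via (\ref{1eq5}) and (\ref{1eq8}) as
\begin{align*}
[A;x]_{m+p}=\Gamma^{-1}(A)\int_{x}^{\infty}e^{-t}\,t^{A+(m+p)I-I}\,dt,
\end{align*}
and likewise express the ordinary Pochhammer symbols through the Gamma matrix function (\ref{g1}) as
\begin{align*}
(B)_{m+n}=\Big(\int_{0}^{\infty}e^{-s}\,s^{B+(m+n)I-I}\,ds\Big)\Gamma^{-1}(B),\qquad
(B')_{n+p}=\Big(\int_{0}^{\infty}e^{-u}\,u^{B'+(n+p)I-I}\,du\Big)\Gamma^{-1}(B').
\end{align*}
Since $A,B,B',C$ commute, the factors $\Gamma^{-1}(A)$, $\Gamma^{-1}(B)$, $\Gamma^{-1}(B')$ can be pulled in front, and on interchanging the triple sum with the triple integral the integrand carries the surviving powers $t^{A-I}s^{B-I}u^{B'-I}$ together with the index-dependent monomials $t^{(m+p)I}$, $s^{(m+n)I}$ and $u^{(n+p)I}$.

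The crucial bookkeeping step is to track how these monomials recombine with $z_1^{m},z_2^{n},z_3^{p}$: the $m$-dependent factors give $z_1^{m}s^{m}t^{m}=(z_1 st)^{m}$, the $n$-dependent factors give $z_2^{n}s^{n}u^{n}=(z_2 su)^{n}$, and the $p$-dependent factors give $z_3^{p}t^{p}u^{p}=(z_3 ut)^{p}$. The inner triple sum then reads
\begin{align*}
\sum_{m,n,p\geq0}(C)^{-1}_{m+n+p}\frac{(z_1 st)^{m}(z_2 su)^{n}(z_3 ut)^{p}}{m!\,n!\,p!}.
\end{align*}

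The key obstacle, and the only genuinely new point compared with the $\Gamma_{\mathcal{A}}^{H}$ and $\Gamma_{\mathcal{B}}^{H}$ cases, is that the single denominator $(C)^{-1}_{m+n+p}$ couples all three summation indices, so the inner sum does not split into a product of one-variable hypergeometric functions (contrast (\ref{sx1}), where the separate denominators $(C)^{-1}_m(C')^{-1}_n(C'')^{-1}_p$ produced a product of three ${_{0}F_{1}}$ factors). Instead I would invoke the three-variable analogue of the summation formula (\ref{l2}),
\begin{align*}
\sum_{N\geq0}f(N)\frac{(w_1+w_2+w_3)^{N}}{N!}=\sum_{m,n,p\geq0}f(m+n+p)\frac{w_1^{m}}{m!}\frac{w_2^{n}}{n!}\frac{w_3^{p}}{p!},
\end{align*}
with $f(N)=(C)^{-1}_{N}$ and $(w_1,w_2,w_3)=(z_1 st,\,z_2 su,\,z_3 ut)$. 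This collapses the triple inner sum into $\sum_{N\geq0}(C)^{-1}_{N}(z_1 st+z_2 su+z_3 ut)^{N}/N!={_{0}F_{1}}(-;C;z_1 st+z_2 su+z_3 ut)$, which is precisely (\ref{psx1}). The interchange of summation and integration used above is justified by the positive stability hypotheses on $A,B,B'$, which guarantee absolute convergence of the series together with integrability of $e^{-t-s-u}t^{A-I}s^{B-I}u^{B'-I}$ over the stated domain.
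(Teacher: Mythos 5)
Your proposal is correct and is essentially the proof the paper intends: the paper omits the proofs in Section~5, but its argument for the analogous representations (\ref{2eq6}) and (\ref{x1}) of $\Gamma_{\mathcal{A}}^{H}$ is exactly your procedure of replacing $[A;x]_{m+p}$, $(B)_{m+n}$ and $(B')_{n+p}$ by their (incomplete) gamma matrix integrals, interchanging sum and integral, and collapsing the inner sum via the summation formula (\ref{l2}). Your one new observation --- that the coupled denominator $(C)^{-1}_{m+n+p}$ forces the three-variable analogue of (\ref{l2}) with $f(N)=(C)^{-1}_{N}$ instead of a factorization into separate ${_{0}F_{1}}$ factors as in (\ref{sx1}) --- is precisely the adjustment needed, and your bookkeeping of the monomials $(z_1 st)^m$, $(z_2 su)^n$, $(z_3 ut)^p$ matches (\ref{psx1}).
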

\begin{theorem}\label{h1}Let $B'+sI$  be  invertible  for all integers $s\geq0$. Then the following recursion formula holds  true for  $\Gamma_{\mathcal{C}}^{H}$:
\begin{align}
&\Gamma_{\mathcal{C}}^{H}[(A; x), B, B'+sI; C ; z_1, z_2, z_3]\notag\\
&=\Gamma_{\mathcal{C}}^{H}[(A; x), B, B'; C ; z_1, z_2, z_3]\notag\\&\quad+ z_2{B}{C}^{-1}\Big[\sum_{k=1}^{s}\Gamma_{\mathcal{C}}^{H}[(A; x), B+I, B'+kI;  C+I ; z_1, z_2, z_3]\Big]\notag\\
&\quad+ z_3{A}{C}^{-1}\Big[\sum_{k=1}^{s}\Gamma_{\mathcal{C}}^{H}[(A+I; x), B, B'+kI;  C+I; z_1, z_2, z_3]\Big].\label{psr11}
\end{align}
Furthermore, if  $B'-kI$ are invertible for integers $k\leq s$, then 
\begin{align}&\Gamma_{\mathcal{C}}^{H}[(A; x), B, B'-sI; C ; z_1, z_2, z_3]\notag\\
&=\Gamma_{\mathcal{C}}^{H}[(A; x), B, B'; C; z_1, z_2, z_3]\notag\\&\quad- z_2{B}{C}^{-1}\Big[\sum_{k=0}^{s-1}\Gamma_{\mathcal{C}}^{H}[(A; x), B+I, B'-kI; C+I ; z_1, z_2, z_3]\Big]\notag\\
&\quad- z_3{A}{C}^{-1}\Big[\sum_{k=0}^{s-1}\Gamma_{\mathcal{C}}^{H}[(A+I; x), B, B'-kI;  C+I; z_1, z_2, z_3]\Big].\label{psr12}
\end{align}
where $A$, $B$, $B'$ and $C$ are  positive stable and commutative matrices in $\mathbb{C}^{r\times r}$ such that $C+kI$ is invertible  matrix for all integers $k\geq 0$.
\end{theorem}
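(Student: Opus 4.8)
The plan is to follow the template already established for $\Gamma_{\mathcal{A}}^{H}$ in Section~3: first derive a single three-term contiguous matrix relation that raises the parameter $B'$ to $B'+I$, and then iterate that relation $s$ times. Because $\Gamma_{\mathcal{C}}^{H}$ carries only the one denominator parameter $C$, entering through the factor $(C)^{-1}_{m+n+p}$, the corresponding upward shift will be $C\to C+I$ in \emph{both} correction terms, which is precisely the form displayed in \eqref{psr11}.

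To obtain the contiguous relation I would work directly on the defining series \eqref{ps2eq2}. Replacing $B'$ by $B'+I$ and using the commutativity hypothesis to write
\begin{align*}
(B'+I)_{n+p}=B'^{-1}(B')_{n+p}(B'+(n+p)I)=(B')_{n+p}+(n+p)\,B'^{-1}(B')_{n+p},
\end{align*}
splits the series for $\Gamma_{\mathcal{C}}^{H}[(A;x),B,B'+I;C;z_1,z_2,z_3]$ into the unshifted function plus a term carrying the scalar factor $(n+p)$. Writing $(n+p)=n+p$ separates this into an $n$-piece and a $p$-piece. For the $n$-piece I would re-index $n\mapsto n+1$ and apply $B(B+I)_{m+n}=(B)_{m+n+1}$, $(B'+I)_{n+p}=B'^{-1}(B')_{n+p+1}$ and $C^{-1}(C+I)^{-1}_{m+n+p}=(C)^{-1}_{m+n+p+1}$, which identifies the piece as $z_2\,B\,C^{-1}\Gamma_{\mathcal{C}}^{H}[(A;x),B+I,B'+I;C+I;z_1,z_2,z_3]$. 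For the $p$-piece I would re-index $p\mapsto p+1$ and additionally invoke the incomplete-Pochhammer shift $[A;x]_{m+p+1}=A\,[A+I;x]_{m+p}$, which follows from the definition $[A;x]_{k}=\Gamma(A+kI,x)\Gamma^{-1}(A)$ in \eqref{1eq8} together with the functional equation $\Gamma(A+I)=A\Gamma(A)$; this yields $z_3\,A\,C^{-1}\Gamma_{\mathcal{C}}^{H}[(A+I;x),B,B'+I;C+I;z_1,z_2,z_3]$. Collecting the three contributions gives the basic contiguous relation
\begin{align*}
&\Gamma_{\mathcal{C}}^{H}[(A;x),B,B'+I;C;z_1,z_2,z_3]=\Gamma_{\mathcal{C}}^{H}[(A;x),B,B';C;z_1,z_2,z_3]\\
&\quad+z_2\,B\,C^{-1}\Gamma_{\mathcal{C}}^{H}[(A;x),B+I,B'+I;C+I;z_1,z_2,z_3]\\
&\quad+z_3\,A\,C^{-1}\Gamma_{\mathcal{C}}^{H}[(A+I;x),B,B'+I;C+I;z_1,z_2,z_3].
\end{align*}

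With the contiguous relation in hand, I would iterate it: applying the relation successively to $\Gamma_{\mathcal{C}}^{H}$ with parameters $B'+2I,\,B'+3I,\,\dots,\,B'+sI$ and accumulating the correction terms into the two finite sums $\sum_{k=1}^{s}$ yields \eqref{psr11}; formally this is an induction on $s$. The downward formula \eqref{psr12} follows by instead replacing $B'$ with $B'-I$ in the contiguous relation---legitimate since $B'-kI$ is invertible for $k\le s$---and iterating, which reverses the signs and shifts the summation range to $k=0,\dots,s-1$. I expect the main obstacle to be bookkeeping rather than anything conceptual: one must keep the four simultaneous shifts $A\to A+I$, $B\to B+I$, $B'\to B'+kI$ and $C\to C+I$ consistent through every re-indexing, and justify each extraction of a matrix factor from the series by the standing commutativity hypothesis. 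The invertibility of $B'+sI$ (respectively $B'-kI$) is exactly what allows the identity $(B'+I)_{n+p}=B'^{-1}(B')_{n+p}(B'+(n+p)I)$ to be applied and inverted at each stage.
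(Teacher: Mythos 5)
Your proposal is correct and follows essentially the same route as the paper: the paper omits an explicit proof in Section~5, but its model proof for the analogous $\Gamma_{\mathcal{A}}^{H}$ theorem in Section~3 is exactly your scheme --- use the transformation $(B'+I)_{n+p}=B'^{-1}(B')_{n+p}(B'+(n+p)I)$ to obtain a one-step contiguous relation raising $B'$ to $B'+I$ (with both denominator shifts collapsing onto the single parameter $C$), then iterate $s$ times for \eqref{psr11} and substitute $B'\to B'-I$ and iterate for \eqref{psr12}. The only cosmetic difference is that you derive the contiguous relation directly from the defining series \eqref{ps2eq2} (with the shift $[A;x]_{m+p+1}=A\,[A+I;x]_{m+p}$ justified from \eqref{1eq8}), whereas the paper phrases the same manipulation through its integral representation; both are sound.
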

Other recursion formulas for the matrix functions $\Gamma_{\mathcal{C}}^{H}$ about the matrix parameter  $B'$ can be obtained as follows:
\begin{theorem}Let $B'+sI$  be  invertible  for all integers $s\geq0$. Then the following recursion formula holds  true for  $\Gamma_{\mathcal{C}}^{H}$:
\begin{align}
&\Gamma_{\mathcal{C}}^{H}[(A; x), B, B'+sI; C ; z_1, z_2, z_3]\notag\\
&=\sum_{k_1+k_2\leq s}^{}{s\choose k_1, k_2}z_{2}^{k_1} z_{3}^{k_2}{(A)_{k_2}}(B)_{k_1}(C)^{-1}_{k_1+k_2}  \,\notag\\
&\quad\times\Big[\Gamma_{\mathcal{C}}^{H}[(A+k_2 I; x), B+k_1I, B'+(k_1+k_2)I;  C+(k_1+k_2)I; z_1, z_2, z_3]\Big].\label{ps3eqh1}
\end{align}
Furthermore, if  $B'-kI$ are invertible for   integers $k\leq s$, then 
\begin{align}&\Gamma_{\mathcal{C}}^{H}[(A; x), B, B'-sI; C ; z_1, z_2, z_3]\notag\\
&=\sum_{k_1+k_2\leq s}^{}{s\choose k_1, k_2}(-z_{2})^{k_1} (-z_{3})^{k_2}{(A)_{k_2}}(B)_{k_1}(C)^{-1}_{k_1+k_2}\,\notag\\
&\quad\times\Big[\Gamma_{\mathcal{C}}^{H}[(A+k_2 I; x), B+k_1I, B';  C+(k_1+k_2)I ; z_1, z_2, z_3]\Big],\label{s3eqh2}
\end{align}
where $A$, $B$, $B'$ and $C$ are positive stable and commutative matrices in $\mathbb{C}^{r\times r}$ such that $C+kI$ is  invertible  matrices for all integers $k\geq 0$.
\end{theorem}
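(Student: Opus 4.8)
The plan is to prove the ascending recursion (\ref{ps3eqh1}) by induction on $s\in\mathbb{N}$, exactly paralleling the argument that gives (\ref{3eqh1}) for $\Gamma_{\mathcal{A}}^{H}$, and then to read off the descending formula (\ref{s3eqh2}) by running the same machine in reverse. The single ingredient I would set up first is the one-step contiguous relation in the parameter $B'$, namely the case $s=1$ of (\ref{psr11}):
\begin{align*}
&\Gamma_{\mathcal{C}}^{H}[(A; x), B, B'+I; C; z_1, z_2, z_3]\\
&=\Gamma_{\mathcal{C}}^{H}[(A; x), B, B'; C; z_1, z_2, z_3]\\
&\quad+ z_2 B C^{-1}\,\Gamma_{\mathcal{C}}^{H}[(A; x), B+I, B'+I; C+I; z_1, z_2, z_3]\\
&\quad+ z_3 A C^{-1}\,\Gamma_{\mathcal{C}}^{H}[(A+I; x), B, B'+I; C+I; z_1, z_2, z_3].
\end{align*}
I would derive this from the integral representation (\ref{ps2eq6}) together with the contiguous relation $\Phi_{3}(B'+I;C;u,v)=\Phi_{3}(B';C;u,v)+u\,C^{-1}\Phi_{3}(B'+I;C+I;u,v)$ applied with $u=z_2 s+z_3 t$. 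The split $u=z_2 s+z_3 t$ absorbs the factor $s$ (resp. $t$) into $s^{B-I}$ (resp. $t^{A-I}$), so that after renormalising $\Gamma^{-1}(B)$ (resp. $\Gamma^{-1}(A)$) via $\Gamma(B+I)=B\,\Gamma(B)$ one obtains the matrix coefficient $B$ (resp. $A$) and the shift $B\mapsto B+I$ (resp. $A\mapsto A+I$), while the increment of the lower parameter of $\Phi_3$ is exactly the shift $C\mapsto C+I$.

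With this in hand, I would assume (\ref{ps3eqh1}) for $s=t$, replace $B'$ by $B'+I$ throughout, and apply the one-step relation to each summand $\Gamma_{\mathcal{C}}^{H}[(A+k_2I;x),B+k_1I,B'+(k_1+k_2+1)I;C+(k_1+k_2)I;\dots]$. This turns the single sum over $k_1+k_2\le t$ into three sums: the unshifted family, a family carrying the extra factor $z_2(B+k_1I)(C+(k_1+k_2)I)^{-1}$, and a family carrying $z_3(A+k_2I)(C+(k_1+k_2)I)^{-1}$. Reindexing the second family by $(k_1,k_2)\mapsto(k_1-1,k_2)$ and the third by $(k_1,k_2)\mapsto(k_1,k_2-1)$, all three families acquire the common matrix function $\Gamma_{\mathcal{C}}^{H}[(A+k_2I;x),B+k_1I,B'+(k_1+k_2)I;C+(k_1+k_2)I;\dots]$ with identical Pochhammer weights $z_2^{k_1}z_3^{k_2}(A)_{k_2}(B)_{k_1}(C)^{-1}_{k_1+k_2}$, differing only through the multinomial coefficients. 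Collecting them by the trinomial Pascal identity
\begin{align*}
{t+1\choose k_1, k_2}={t\choose k_1, k_2}+{t\choose k_1-1, k_2}+{t\choose k_1, k_2-1}
\end{align*}
delivers (\ref{ps3eqh1}) for $s=t+1$ and closes the induction, the base case $s=1$ being precisely the displayed one-step relation.

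The step I expect to be the crux is the coefficient reconciliation during the reindexing: one must confirm the Pochhammer collapses $(B)_{k_1}(B+k_1I)=(B)_{k_1+1}$, $(A)_{k_2}(A+k_2I)=(A)_{k_2+1}$, and $(C)^{-1}_{k_1+k_2}(C+(k_1+k_2)I)^{-1}=(C)^{-1}_{k_1+k_2+1}$, and that after the index shifts the arguments of $\Gamma_{\mathcal{C}}^{H}$ in all three families genuinely coincide. Each of these manipulations rests on the standing hypothesis that $A,B,B',C$ commute, since otherwise the matrix factors could neither be moved past one another nor merged into single Pochhammer symbols. Finally, I would obtain the descending formula (\ref{s3eqh2}) by the same induction applied to the $B'\mapsto B'-I$ form of the one-step relation (legitimate because $B'-kI$ is invertible for $k\le s$); this version attaches an overall minus sign to each of the two correction terms, which propagates into the alternating weights $(-z_2)^{k_1}(-z_3)^{k_2}$, while the multinomial bookkeeping and the Pascal identity remain unchanged.
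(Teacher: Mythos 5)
Your proposal is correct and follows essentially the same route the paper intends: the paper omits the proof in Section 5, but its template proof for the analogous $\Gamma_{\mathcal{A}}^{H}$ recursion (\ref{3eqh1}) is exactly your argument — establish the one-step contiguous relation in $B'$ (your displayed $s=1$ case of (\ref{psr11}), which the paper gets from the integral/series form and the transformation $(B'+I)_{n+p}=B'^{-1}(B')_{n+p}(B'+(n+p)I)$, equivalent to your $\Phi_3$ contiguous relation), then induct on $s$, reindex the two correction families, and collect terms with the trinomial Pascal identity. Your coefficient reconciliations $(B)_{k_1}(B+k_1I)=(B)_{k_1+1}$, $(A)_{k_2}(A+k_2I)=(A)_{k_2+1}$, $(C)^{-1}_{k_1+k_2}(C+(k_1+k_2)I)^{-1}=(C)^{-1}_{k_1+k_2+1}$ and the sign bookkeeping for the descending case (\ref{s3eqh2}) all check out.
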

\begin{theorem}
Let $C-sI$  be invertible for all integers $s\geq0$. Then the following recursion formulas hold  true for  $\Gamma_{\mathcal{C}}^{H}$:
\begin{align}
&\Gamma_{\mathcal{C}}^{H}[(A; x), B, B'; C-sI; z_1, z_2, z_3]\notag\\
&=\Gamma_{\mathcal{C}}^{H}[(A; x), B, B'; C; z_1, z_2, z_3]\notag\\&\quad+ z_1 AB\Big[\sum_{k=1}^{s} \Gamma_{\mathcal{C}}^{H}[(A+I; x), B+I, B'; C+(2-k)I; z_1, z_2, z_3]\notag\\&\quad\times{(C-kI)^{-1}(C-(k-1)I)^{-1}}\Big]\notag\\&\quad+ z_2 B B'\Big[\sum_{k=1}^{s} \Gamma_{\mathcal{C}}^{H}[(A; x), B+I, B'+I;  C+(2-k)I ; z_1, z_2, z_3]\,\notag\\
&\quad\times{(C-kI)^{-1}(C-(k-1)I)^{-1}}\Big]\notag\\
&+ z_3 A B'\Big[\sum_{k=1}^{s} \Gamma_{\mathcal{C}}^{H}[(A+I; x), B, B'+I;  C+(2-k)I ; z_1, z_2, z_3]\notag\\
&\quad\times{(C-kI)^{-1}(C-(k-1)I)^{-1}}\Big],\label{pph2}
\end{align}
where $A$, $B$, $B'$  and $C$ are positive stable and commutative matrices in $\mathbb{C}^{r\times r}$ such that $C+kI$ is invertible  matrix for all integers $k\geq 0$.
\end{theorem}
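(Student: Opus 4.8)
The plan is to imitate the $C$-recursion argument already carried out for $\Gamma_{\mathcal{A}}^{H}$ in (\ref{zz1})--(\ref{ph2}), adapting it to the fact that $\Gamma_{\mathcal{C}}^{H}$ carries a single contraction parameter $C$ which now couples to all three summation indices through the factor $(C)^{-1}_{m+n+p}$. Accordingly, I expect the lowering of $C$ to generate \emph{three} correction terms (carrying $z_1$, $z_2$, $z_3$ respectively), in contrast to the one term in (\ref{zz1}) and the two terms in (\ref{ph2}). First I would establish the one-step contiguous relation that lowers $C$ by $I$, and then iterate it $s$ times.

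To obtain the contiguous relation, I would start from the series (\ref{ps2eq2}) with $C$ replaced by $C-I$ and insert the matrix identity
\begin{align*}
(C-I)^{-1}_{m+n+p}=(C)^{-1}_{m+n+p}\big[I+(m+n+p)(C-I)^{-1}\big].
\end{align*}
The $I$-term reproduces $\Gamma_{\mathcal{C}}^{H}[(A;x),B,B';C;z_1,z_2,z_3]$ verbatim, while the factor $(m+n+p)$ splits the remainder into three pieces, one for each index. In the $m$-piece I would absorb the integer via $m/m!=1/(m-1)!$, shift $m\mapsto m+1$, and apply the shift identities $[A;x]_{k+1}=A\,[A+I;x]_{k}$ (a consequence of $\Gamma(A+I)=A\,\Gamma(A)$ together with (\ref{1eq8})), $(B)_{k+1}=B(B+I)_{k}$, and $(C)^{-1}_{N+1}=(C+I)^{-1}_{N}C^{-1}$; commutativity of $A,B,B',C$ then lets me pull $A$, $B$, $C^{-1}$ and $(C-I)^{-1}$ in front of the sum, collapsing the piece to $z_1 AB\,(C-I)^{-1}C^{-1}\,\Gamma_{\mathcal{C}}^{H}[(A+I;x),B+I,B';C+I;z_1,z_2,z_3]$. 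The $n$- and $p$-pieces are handled identically, yielding $z_2 BB'\,(C-I)^{-1}C^{-1}\,\Gamma_{\mathcal{C}}^{H}[(A;x),B+I,B'+I;C+I;\cdots]$ and $z_3 AB'\,(C-I)^{-1}C^{-1}\,\Gamma_{\mathcal{C}}^{H}[(A+I;x),B,B'+I;C+I;\cdots]$. This is precisely (\ref{pph2}) in the case $s=1$ (the summand $k=1$, with matrix coefficient $(C-I)^{-1}C^{-1}$ and shifted denominator $C+(2-1)I=C+I$).

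The iteration is then entirely parallel to (\ref{2eqp19}): I would apply the one-step relation to each $\Gamma_{\mathcal{C}}^{H}$ whose third parameter is $C-(j-1)I$, re-expressing it through $C-jI$ plus three fresh correction terms. At level $j$ these corrections carry the matrix coefficient $(C-jI)^{-1}(C-(j-1)I)^{-1}$ and the parameter shift $C+(2-j)I$, so that after setting $k=j$ the three families assemble into the three displayed sums of (\ref{pph2}).

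The main obstacle I anticipate is bookkeeping rather than conceptual. One must verify that when the one-step relation is re-applied to a $\Gamma_{\mathcal{C}}^{H}$ whose parameter has already been lowered, the newly produced factor $C^{-1}(C-I)^{-1}$ is evaluated at the \emph{shifted} argument and therefore telescopes into $(C-kI)^{-1}(C-(k-1)I)^{-1}$ at step $k$, and that all three families of correction terms accumulate with the correct powers $z_1,z_2,z_3$ and the correct shifts $A\mapsto A+I$, $B\mapsto B+I$, $B'\mapsto B'+I$. The commutativity hypothesis is exactly what makes this harmless: the scalar-like matrix factors commute, so their order of multiplication is immaterial, the telescoping is clean, and the induction closes, yielding (\ref{pph2}).
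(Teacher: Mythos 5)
Your proposal is correct and follows essentially the same route the paper takes: the paper omits the proof of this Section~5 theorem, but its proof of the analogous $C$-recursion for $\Gamma_{\mathcal{A}}^{H}$ rests on exactly the contiguous relation obtained from $(C-I)^{-1}_{N}=(C)^{-1}_{N}\left[I+N(C-I)^{-1}\right]$ followed by $s$-fold iteration, which is what you do (with $N=m+n+p$, correctly producing three correction terms). The only cosmetic difference is that you derive the one-step relation directly from the series (\ref{ps2eq2}) rather than via the integral representation the paper nominally invokes, and your index-shift identities, including $[A;x]_{k+1}=A\,[A+I;x]_{k}$, are the right ones.
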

\begin{theorem}Let $A$, $B$, $B'$ and $C$ be  positive stable  and commutative matrices  in  $\mathbb{C}^{r\times r}$ such that   $C+kI$  is invertible for all integers $k\geq0$. Then the following  derivative formulas  for $\Gamma_{\mathcal{C}}^{H}$ in (\ref{ps2eq2}) hold true: 
\begin{align}
&\frac{\partial^{m}}{\partial z_{1}^{m}}\Gamma_{\mathcal{C}}^{H}[(A; x), B, B'; C; z_1, z_2, z_3]\notag\\&= (A)_{m}(B)_{m} (C)^{-1}_{m}\Big[\Gamma_{\mathcal{C}}^{H}[(A+mI; x), B+mI, B'; C+mI; z_1, z_2, z_3]\Big];\label{sd1}\\
&\frac{\partial^{m+n}}{\partial z_{1}^{m}\partial z_{2}^{n}}\Gamma_{\mathcal{C}}^{H}[(A; x), B, B'; C ; z_1, z_2, z_3]\notag\\&= (A)_{m}(B)_{m+n}(B')_{n}(C)^{-1}_{m+n}\notag\\& \Big[\Gamma_{\mathcal{C}}^{H}[(A+mI; x), B+(m+n)I, B'+nI; C+(m+n)I; z_1, z_2, z_3]\Big];\label{sd2}\\
&\frac{\partial^{m+n+p}}{\partial z_{1}^{m}\partial z_{2}^{n}\partial z_{3}^{p}}\Gamma_{\mathcal{C}}^{H}[(A; x), B, B'; C ; z_1, z_2, z_3]\notag\\&= (A)_{m+p}(B)_{m+n}(B')_{n+p}(C)^{-1}_{m+n+p}\notag\\& \Big[\Gamma_{\mathcal{C}}^{H}[(A+(m+p)I; x), B+(m+n)I, B'+(n+p)I; C+(m+n+p)I; z_1, z_2, z_3]\Big].\label{sd3}
\end{align}
\end{theorem}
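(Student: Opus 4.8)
The plan is to establish all three formulas by differentiating the defining series \eqref{ps2eq2} term by term and re-indexing, using only the elementary shift identities for the (incomplete) Pochhammer matrix symbols. Because the triple series \eqref{ps2eq2} converges absolutely and locally uniformly on its domain, and because the incomplete Pochhammer factor $[A;x]_{m+p}$ is independent of $z_1,z_2,z_3$, the partial derivatives may be taken inside the summation and act only on the monomials $z_1^m z_2^n z_3^p$.

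First I would prove the single $z_1$-derivative (the case $m=1$ of the first formula). Differentiating \eqref{ps2eq2} once in $z_1$ annihilates the $m=0$ terms and, after the shift $m\mapsto m+1$ of the summation index, yields
\begin{align}
\frac{\partial}{\partial z_1}\Gamma_{\mathcal{C}}^{H}[(A;x),B,B';C;z_1,z_2,z_3]
&=\sum_{m,n,p\geq0}[A;x]_{m+p+1}(B)_{m+n+1}(B')_{n+p}(C)^{-1}_{m+n+p+1}\frac{z_1^m z_2^n z_3^p}{m!\,n!\,p!}.\notag
\end{align}
The algebraic inputs are the one-step shift relations
\begin{align}
[A;x]_{k+1}=A\,[A+I;x]_{k},\qquad (B)_{k+1}=B\,(B+I)_{k},\qquad (C)^{-1}_{k+1}=C^{-1}(C+I)^{-1}_{k},\notag
\end{align}
the first of which follows from the definition \eqref{1eq8} together with the functional equation $\Gamma(A+I)=A\,\Gamma(A)$, the other two from $(M)_{k+1}=M(M+I)_{k}$. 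Applying these with $k=m+p$, $k=m+n$ and $k=m+n+p$ respectively, and using the commutativity of $A,B,B',C$ to pull the factors $A$, $B$, $C^{-1}$ out as the block $ABC^{-1}$, turns the right-hand side into $ABC^{-1}\,\Gamma_{\mathcal{C}}^{H}[(A+I;x),B+I,B';C+I;z_1,z_2,z_3]$. This is the first formula for $m=1$; the general $m$ then follows by induction.

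The mixed formulas follow by the same mechanism, the only new ingredient being the iterated shift identities
\begin{align}
[A;x]_{k+\ell}=(A)_{\ell}\,[A+\ell I;x]_{k},\qquad (B)_{k+\ell}=(B)_{\ell}\,(B+\ell I)_{k},\qquad (C)^{-1}_{k+\ell}=(C)^{-1}_{\ell}\,(C+\ell I)^{-1}_{k},\notag
\end{align}
obtained by iterating the one-step relations for each nonnegative integer $\ell$. Differentiating \eqref{ps2eq2} $m$ times in $z_1$, $n$ times in $z_2$ and $p$ times in $z_3$ (with $m,n,p$ now the orders of differentiation as in the statement) and re-indexing increases each summation index by the corresponding order wherever it occurs. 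The main bookkeeping step, and the place where an error is easiest to make, is tracking how these increments combine inside the four distinct subscripts: the $z_1$- and $z_3$-derivatives both feed the $A$-subscript (producing the factor $(A)_{m+p}$ and the shift $A\mapsto A+(m+p)I$), the $z_1$- and $z_2$-derivatives feed the $B$-subscript (giving $(B)_{m+n}$, $B\mapsto B+(m+n)I$), the $z_2$- and $z_3$-derivatives feed the $B'$-subscript (giving $(B')_{n+p}$, $B'\mapsto B'+(n+p)I$), and all three feed the $C$-subscript (giving $(C)^{-1}_{m+n+p}$, $C\mapsto C+(m+n+p)I$). Collecting these reproduces exactly the third formula; the first and second formulas are then its specializations $p=0$ and $n=p=0$.

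As an alternative worth recording, one could instead differentiate the triple integral representation \eqref{psx1}, using $\tfrac{d}{dz}\,{_0F_1}(-;C;z)=C^{-1}\,{_0F_1}(-;C+I;z)$ and absorbing the chain-rule factors $st$, $us$, $ut$ into $t^{A-I}s^{B-I}u^{B'-I}$; this is the route taken for $\Gamma_{\mathcal{A}}^{H}$ earlier in the paper. The single integral \eqref{ps2eq6} is less convenient here because the $\Phi_{3}$-kernel makes the $z_2$- and $z_3$-derivatives opaque, so the term-by-term series argument above is the cleanest path to all three identities.
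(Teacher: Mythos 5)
Your proof is correct, but it is not the route the paper takes. In fact the paper omits the proof of this Section~5 theorem entirely (``We omit the proofs of the given below theorems''), implicitly deferring to the corresponding result for $\Gamma_{\mathcal{A}}^{H}$ in Section~3, which is proved by differentiating the double integral representation \eqref{2eq6} once in $z_1$, recognizing the output as $ABC^{-1}$ times the parameter-shifted function, and then invoking induction on $m$ (the mixed formulas being dismissed as ``analogous'') --- i.e., exactly the alternative you record at the end of your argument. Your term-by-term series differentiation, resting on the shift identities $[A;x]_{k+\ell}=(A)_{\ell}[A+\ell I;x]_{k}$, $(B)_{k+\ell}=(B)_{\ell}(B+\ell I)_{k}$, $(C)^{-1}_{k+\ell}=(C)^{-1}_{\ell}(C+\ell I)^{-1}_{k}$, is sound: the incomplete case follows from \eqref{1eq8} and $\Gamma(A+I)=A\Gamma(A)$ since $\Gamma(A+kI,x)$, being a function of $A$, commutes with $A$, and your bookkeeping of how the index shifts feed the four subscripts reproduces the third formula exactly. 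Your approach buys two things: all three formulas, including the mixed ones, come from one uniform argument instead of an ``analogous manner'' remark, and it uses only the hypotheses actually stated in the theorem ($C+kI$ invertible), whereas the integral route needs the additional invertibility assumptions on $A+kI$, $B+kI$, $B'+kI$ that the representations \eqref{ps2eq6} and \eqref{psx1} require. Two trivial blemishes, neither a gap: your closing sentence swaps the specializations (the first formula is $n=p=0$, the second is $p=0$), and your appeal to locally uniform convergence to justify differentiation under the sum is at the same level of rigor as the paper's own ``differentiating formally.''
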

\begin{remark}The results in this article generalize the corresponding results of
Choi \textit{et. al.}\cite{jc, jc1} to the matrix case. For $x=0$, the incomplete Srivastava's  triple  hypergeometric matrix  functions reduces to the Srivastava's  triple  hypergeometric matrix  function. Therefore,  taking $x=0$, the obtained results for the incomplete Srivastava's  triple  hypergeometric matrix  functions  reduce to the  results for the Srivastava's  triple  hypergeometric matrix  functions.
\end{remark}
{\bf Acknowledgements} The author would like to thank the referees for their valuable comments and suggestions
which have led to a better presentation of the paper. This work is inspired by and is dedicated to Professor Vivek Sahai,  Department of Mathematics and Astronomy, University of
Lucknow.

\end{document}